\def\titlerunning#1{\gdef\titrun{#1}}
\def\author#1{\gdef\autrun{\def\and{\unskip, }#1}\gdef\@author{#1}}
\def\address#1{{\def\and{\\\hspace*{18pt}}\renewcommand{\thefootnote}{}%
\footnote {#1}}%
\markboth{\autrun}{\titrun}}
\def\email#1{e-mail: #1}
\def\subjclass#1{{\renewcommand{\thefootnote}{}%
\footnote{\emph{Mathematics Subject Classification (2010):} #1}}}
\def\keywords#1{\par\medskip
\noindent\textbf{Keywords.} #1}
\newtheorem{thm}{Theorem}[section]
\newtheorem{cor}[thm]{Corollary}
\newtheorem{lem}[thm]{Lemma}
\newtheorem{prop}[thm]{Proposition}
\newtheorem{facts}[thm]{Facts}
\theoremstyle{definition}
\newtheorem{defin}[thm]{Definition}
\newtheorem{rem}[thm]{Remark}
\newtheorem{exa}[thm]{Example}
\numberwithin{equation}{section}
\newcommand\Spin{\operatorname{Spin}}
\newcommand\Ad{\operatorname{Ad}}
\newcommand\Exp{\operatorname{Exp}}
\newcommand\diag{\operatorname{diag}}
\newcommand\trace{\operatorname{trace}}
\newcommand\Id{\operatorname{Id}}
\newcommand\SO{\operatorname{SO}}
\newcommand\SU{\operatorname{SU}}
\newcommand\so{\mathfrak{so}}
\begin{document}


\baselineskip=17pt


\titlerunning{Compact homogeneous Riemannian manifolds with low co-index of symmetry}

\title{Compact homogeneous Riemannian manifolds with low co-index of symmetry}

\author{J\"{u}rgen Berndt
\and 
Carlos Olmos
\and
Silvio Reggiani}

\date{}

\maketitle

\address{J. Berndt: Department of Mathematics, King's College London, London, WC2R 2LS,
United Kingdom; \email{jurgen.berndt@kcl.ac.uk}
\and
C. Olmos: Facultad de Matem\'atica, Astronom\'ia y F\'isica, Universidad Nacional de C\'ordoba, 
Ciudad Universitaria, 5000 C\'ordoba, Argentina; \email{olmos@famaf.unc.edu.ar}
\and
S. Reggiani: Dpto.\ de Matem\'atica, ECEN-FCEIA, Universidad Nacional de Rosario, Av.\ Pellegrini 250, 2000 Rosario, Argentina; \email{reggiani@fceia.unr.edu.ar}}

\subjclass{Primary 53C30; Secondary 53C35}


\begin{abstract}
We develop a general structure theory for compact homogeneous Riemannian
manifolds in relation to the co-index of symmetry. We will then use these
results to classify irreducible, simply connected, compact homogeneous
Riemannian manifolds whose co-index of symmetry is less or equal than three. We
will also construct many examples which arise from the theory of polars and
centrioles in Riemannian symmetric spaces of compact type. 

\keywords{Compact homogeneous manifolds, symmetric spaces, index of symmetry, Killing fields, polars, centrioles}
\end{abstract}

\section {Introduction}

A homogeneous manifold is a manifold $M$ together with a Lie group $G$ acting
transitively on $M$. Homogeneous manifolds are of particular interest in
geometry, topology, algebra and physics. In the context of Riemannian geometry
one is interested in homogeneous Riemannian manifolds, where the group $G$ acts
transitively by isometries. Killing fields are vector fields preserving the
metric on the manifold. Such vector fields are of interest in particle physics
where they correspond to symmetries in theoretical models. On a homogeneous
Riemannian manifold there are many Killing vector fields. More precisely, a
connected complete Riemannian manifold $M$ is homogeneous if and only if at
every point $p \in M$ and for every $v \in T_pM$ there exists a Killing field $X$ on $M$ with $X_p = v$. This characterization of homogeneous Riemannian manifolds is very useful.

A Killing field is uniquely determined by its value and its covariant derivative
at a point. Important classes of homogeneous Riemannian manifolds are obtained
by imposing additional conditions on the covariant derivative of its Killing
fields. For example, a homogeneous Riemannian manifold $M$ is a Riemannian
symmetric space if and only if for every point $p \in M$ and  every 
$v\in T_pM$, there exists a Killing
field $X$ on $M$ with $X_p = v$ and $(\nabla X)_p = 0$. Riemannian symmetric
spaces were classified by \'{E}lie Cartan and there is a beautiful theory
relating such spaces to the algebraic theory of semisimple Lie algebras (see
e.g.\ \cite{Helgason}).

Motivated by this characterization of symmetric spaces, the second and third
author together with Tamaru  introduced in  \cite{OlmosReggianiTamaru} the index of symmetry of
a Riemannian manifold. Let $M$ be a Riemannian manifold and denote by $\mathfrak
K(M)$ the Lie algebra of Killing fields on $M$. 
For $q \in M$ define the symmetric subspace $\mathfrak s_q$ of $T_qM$ by
$\mathfrak s_q = \{X_q \in T_qM : X \in \mathfrak K(M) \ {\rm and}\ (\nabla X)_q
= 0\}$. The index of symmetry $i_{\mathfrak s}(M)$ of $M$ is defined as
$i_{\mathfrak s}(M) = \inf \{\dim (\mathfrak s_q) : q \in M\}$, and the
co-index of symmetry $ci_{\mathfrak s}(M)$ is defined by $ci_{\mathfrak s}(M) =
\dim(M) - i_{\mathfrak s}(M)$. If $M$ is a homogeneous Riemannian manifold, say
$M = G/H$, then the symmetric subspaces form a $G$-invariant distribution
$\mathfrak s$ on $M$. This distribution is called the distribution of symmetry
on $M$. In \cite{OlmosReggianiTamaru} it was shown that the distribution of symmetry is
integrable and its maximal integral manifolds are Riemannian symmetric spaces
which are embedded in $M$ as totally geodesic submanifolds. For normal
homogeneous Riemannian manifolds and a class of naturally reductive homogeneous
Riemannian manifolds the distribution of symmetry was explicitly determined in
\cite{OlmosReggianiTamaru}.

As mentioned above, a homogeneous Riemannian manifold is a Riemannian symmetric
space if and only if $ci_{\mathfrak s}(M) = 0$. Thus the co-index of symmetry
can be regarded as a measure for how far a homogeneous Riemannian manifold fails
to be a Riemannian symmetric space. The purpose of this paper is to develop some
general structure theory for compact homogeneous Riemannian manifolds in
relation to the co-index of symmetry. We will then use these results to classify
irreducible, simply connected, compact homogeneous Riemannian manifolds whose
co-index of symmetry is less or equal than $3$. We will also determine the
co-index of symmetry for  compact homogeneous Riemannian manifolds which arise
as total spaces over polars in Riemannian symmetric spaces of compact type and
whose fibers are centrioles. 

The paper is organized as follows. In Section 2 we present some basic results
about Riemannian symmetric spaces and which will be used later.

In Section 3 we investigate $G$-invariant autoparallel distributions $\mathcal
D$ on compact homogeneous Riemannian manifolds $M = G/H$. Such a distribution is
said to be strongly symmetric with respect to $G$ if every maximal integral
manifold $L$ of $\mathcal D$ is a Riemannian symmetric space and the
transvection group of $L$ is contained in $\{g_{\vert L} : g \in G\ {\rm and}\
g(L) = L\}$.
The main result is Theorem \ref{main} which says, roughly speaking, that if the
co-rank $k$ of a strongly symmetric $G$-invariant distribution on $G/H$
satisfies $k \geq 2$, then $M$ is a homogeneous space of a normal semisimple
subgroup $G'$ of $G$ with $2\dim(G') \leq k(k+1)$.

In Section 4 we introduce the index and co-index of symmetry and review some
results from \cite{OlmosReggianiTamaru}.

In Section 5 we develop some general structure theory for compact homogeneous
Riemannian manifolds in relation to the co-index of symmetry. The main result in
this section is Theorem \ref{mainco}:
Let $M$ be a simply connected compact homogeneous Riemannian manifold and
assume that $M$ does not split off a symmetric de Rham factor. 
Then $k = ci_{\mathfrak s}(M) \geq 2$ and there exists a transitive semisimple
normal Lie subgroup 
$G'$ of  the isometry group of $M$ 
 such that $2\dim (G') \leq  k(k+1)$. The
 equality holds if and only if 
the universal covering group 
of $G'$ is $\Spin (k+1)$. Moreover, if the equality holds and  
$ci_{\mathfrak s}(M) \geq 3$, then the isotropy group of $G'$ has positive
dimension. 

In Section 6 we investigate compact homogeneous Riemannian manifolds with
$ci_{\mathfrak s}(M)$ $= 3$. We will construct explicitly a $2$-parameter family
of non-homothetical $SO(4)$-invariant Riemannian metrics on  $M = SO(4)/SO(2)$
with $ci_{\mathfrak s}(M) = 3$.
The main result is Theorem \ref{classification} and states that every
irreducible, simply connected, compact homogeneous Riemannian manifold with
$ci_{\mathfrak s}(M) = 3$ is homothetic to $M = SO(4)/SO(2)$ with such a
Riemannian metric.

In Section 7 we investigate compact homogeneous Riemannian manifolds with
$ci_{\mathfrak s}(M)$ $= 2$. We will construct explicitly two $1$-parameter
families of non-homo\-thetical left-invariant Riemannian metrics on  $M =
\Spin(3)$ with $ci_{\mathfrak s}(M) = 2$.
The main result is Theorem \ref{dos} and states that every irreducible, simply
connected, compact homogeneous Riemannian manifolds with $ci_{\mathfrak s}(M) =
2$ is homothetic to $M = \Spin(3)$ with such a left-invariant Riemannian metric.

In Section 8 we review the construction by Nagano and Tanaka in \cite{NaganoTanaka} of
certain fibrations $K^+/K^{++} \to K/K^{++} \to K/K^+$. Let $M = G/K$ be a
simply connected Riemannian symmetric space of compact type such that $K$ is the
isotropy group of $G$ at $o \in M$. Let $p$ be an antipodal point of $o$ in $M$.
Then the orbit $B = K \cdot p = K/K^+$ of $K$ through $p$ is a so-called polar
of $M$. Assume that $\dim(B) > 0$ and that $B$ is irreducible. Let $q$ be the
midpoint of a distance minimizing geodesic joining $o$ and $p$ and assume that
the orbit $S = K \cdot q = K/K^{++}$ is not a Riemannian symmetric space with
respect to the induced metric from $M$. The fibers $K^+/K^{++}$ of the fibration
$K/K^{++} \to K/K^+$ are centrioles in $M$. We will show in Theorem \ref{polar}
that the co-index of symmetry of the orbit $S = K/K^{++}$, with the induced
Riemannian metric, is equal to the dimension of  the polar $B = K/K^+$. This
provides many examples of compact homogeneous Riemannian manifolds for which the
co-index of symmetry can be calculated explicitly in a rather simple way.

\section {Preliminaries and basic results}

Let $M= G/K$ be an $n$-dimensional, connected, simply connected, Riemannian
symmetric 
space, where $n \geq 2$ and $(G,K)$ is an effective symmetric pair. 
We denote by $I(M)$ the full isometry group of $M$ and by $I^o(M)$ the connected
component of $I(M)$ containing the identity transformation of $M$. Note that $G
= I^o(M)$ if the Riemannian universal covering space of 
$M$ has no Euclidean de Rham factor, or equivalently, if $M$ is a semisimple
Riemannian symmetric space.
The geodesic symmetry  at $p\in M$ will be denoted by $\sigma _p$. A Riemannian
symmetric space is said to be inner if $\sigma_p \in I^o(M)$ for one (and hence
for all) $p \in M$.

\begin{lem} \label {translates}
Let $\gamma \in Z_{I(M)}(G)$ be in the centralizer of $G$ in $I(M)$ and assume
that for every $q\in M$ with $\gamma(q) \neq q$ the isometry $\gamma$ translates
a minimizing geodesic in $M$ joining $q$ and $\gamma (q)$. 
Then we have $  \sigma _p \gamma \sigma ^{-1} _p = \gamma^{-1} $ for all 
$p\in M$. If, in particular, $M$ is inner, then $\gamma^2 = {\rm id}_M$.
\end{lem}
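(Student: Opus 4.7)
The strategy is to exploit the rigidity of Clifford--Wolf translations. First I would observe that $\gamma$ moves every point by the same distance: for any $q \in M$ and $g \in G$, since $g$ is an isometry commuting with $\gamma$, we have $d(g(q), \gamma(g(q))) = d(g(q), g(\gamma(q))) = d(q, \gamma(q))$, so $d(\cdot, \gamma(\cdot))$ is $G$-invariant and hence constant by transitivity, say equal to $\ell \geq 0$. If $\ell = 0$ then $\gamma = \mathrm{id}_M$ and the statement is trivial, so I assume $\ell > 0$.

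Next I would record two rigidity facts. (i) Any $\eta \in Z_{I(M)}(G)$ with a fixed point must equal $\mathrm{id}_M$: the same displacement argument shows $\eta$ has constant displacement, which is zero at the fixed point and hence zero everywhere; in particular, two elements of $Z_{I(M)}(G)$ agreeing at a single point coincide. (ii) The geodesic symmetry $\sigma_p$ normalizes $G$ for every $p \in M$: conjugation by $\sigma_o$ at the base point $o$ realizes the Cartan involution $\theta$ of $G$ (both agree on $K$ and on each one-parameter subgroup $\exp(tX)$ with $X \in \mathfrak p$), and for $p = g(o)$ we have $\sigma_p = g\sigma_o g^{-1}$, so $\sigma_p G \sigma_p^{-1} = g\theta(G) g^{-1} = G$. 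Using (ii), for every $g \in G$ the element $g' := \sigma_p^{-1} g \sigma_p$ lies in $G$ and therefore commutes with $\gamma$, which yields $\sigma_p \gamma \sigma_p^{-1} \cdot g = \sigma_p \gamma g' \sigma_p^{-1} = \sigma_p g' \gamma \sigma_p^{-1} = g \cdot \sigma_p \gamma \sigma_p^{-1}$; thus $\sigma_p \gamma \sigma_p^{-1} \in Z_{I(M)}(G)$ for every $p$.

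The heart of the proof is then a clever choice of $p$. Fix any $q$ with $\gamma(q) \neq q$, let $\alpha$ be a minimizing geodesic from $q$ to $\gamma(q)$ translated by $\gamma$, parametrized so that $\alpha(0) = q$ and $\gamma(\alpha(t)) = \alpha(t + \ell)$, and set $p = \alpha(\ell/2)$. Because $p$ is the midpoint of the segment from $q$ to $\gamma(q)$, $\sigma_p$ sends $\alpha(\ell/2 + u)$ to $\alpha(\ell/2 - u)$, and in particular exchanges $q = \alpha(0)$ with $\gamma(q) = \alpha(\ell)$ and sends $\alpha(2\ell)$ to $\alpha(-\ell)$; hence
\[
\sigma_p \gamma \sigma_p^{-1}(q) \;=\; \sigma_p(\gamma^2(q)) \;=\; \sigma_p(\alpha(2\ell)) \;=\; \alpha(-\ell) \;=\; \gamma^{-1}(q).
\]
Therefore $\sigma_p \gamma \sigma_p^{-1}$ and $\gamma^{-1}$ both lie in $Z_{I(M)}(G)$ and agree at $q$; by (i) they coincide for this particular $p$.

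Finally, the identity $g\sigma_p \gamma \sigma_p^{-1} g^{-1} = \sigma_{g(p)} \gamma \sigma_{g(p)}^{-1}$, which follows from $g\sigma_p g^{-1} = \sigma_{g(p)}$ combined with the centralizer property, shows that $\{p \in M : \sigma_p \gamma \sigma_p^{-1} = \gamma^{-1}\}$ is a nonempty $G$-invariant subset of $M$, hence all of $M$ by transitivity of $G$. For the final assertion, if $M$ is inner then $\sigma_p \in G$, so the centralizer hypothesis gives $\sigma_p \gamma \sigma_p^{-1} = \gamma$ directly; combined with the main identity this forces $\gamma = \gamma^{-1}$, i.e., $\gamma^2 = \mathrm{id}_M$. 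I expect step (ii) to be the most delicate: one must verify that conjugation by $\sigma_p$ preserves $G$ itself (and not just $I^o(M)$), since the hypothesis controls $\gamma$ only through its commutation with elements of $G$.
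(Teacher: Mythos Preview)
Your proof is correct, but it takes a different route from the paper's. The paper fixes an \emph{arbitrary} $p$ from the outset and uses a single observation you did not exploit: the transvection $\sigma_q\sigma_p$ lies in $G$, hence commutes with $\gamma$. Writing $\bar\gamma=\sigma_p\gamma\sigma_p^{-1}$ (which, as you also note, again centralizes $G$ and translates minimizing geodesics), one gets immediately
\[
\gamma(q)=(\sigma_q\sigma_p)\gamma(\sigma_q\sigma_p)^{-1}(q)=\sigma_q\bar\gamma(q).
\]
Applying $\sigma_q$ to the geodesic $\beta$ that $\bar\gamma$ translates through $q$ reflects it, sending $\bar\gamma(q)=\beta(a)$ to $\beta(-a)=\bar\gamma^{-1}(q)$; thus $\gamma(q)=\bar\gamma^{-1}(q)$ for every non-fixed $q$, and the identity $\sigma_p\gamma\sigma_p^{-1}=\gamma^{-1}$ follows for this arbitrary $p$ with no further transitivity step.

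By contrast, you first isolate the Clifford--Wolf property, deduce the rigidity principle (two elements of $Z_{I(M)}(G)$ agreeing at one point coincide), verify the identity only at the special midpoint $p=\alpha(\ell/2)$, and then transport it to all $p$ via $G$-transitivity. This is perfectly sound and arguably makes the underlying mechanism (constant displacement plus free action of the centralizer) more transparent; the paper's argument is shorter because the transvection trick $\sigma_q\sigma_p\in G$ collapses your steps (i), the midpoint choice, and the transitivity step into one computation. Your closing caveat about $\sigma_p$ normalizing $G$ (not merely $I^o(M)$) is well placed; the paper uses this implicitly when asserting that $\bar\gamma$ again satisfies the hypotheses, and it holds because conjugation by $\sigma_o$ realizes the Cartan involution of the pair $(G,K)$.
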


\begin {proof}
Let $p\in M$ and put $\bar {\gamma} = 
\sigma _p \gamma \sigma ^{-1} _p$. It is clear that 
$\bar {\gamma}$ and $\bar {\gamma}^{-1}$
satisfy the assumptions of this lemma.
Let $q\in M$ with $\gamma(q) \neq q$. By assumption, there exists a geodesic
$\beta : {\mathbb R} \to M$ 
through $q$ and $\bar{\gamma}(q)$ which minimizes the distance between
$q=\beta (0)$ and $\bar {\gamma}(q) = \beta (a)$ with $a > 0$
and is translated by $\bar {\gamma}$. 
Then  $\bar {\gamma}(\beta (t)) = \beta (t + a)$ 
and $\bar {\gamma}^{-1} (\beta (t)) = \beta (t -a)$ for all $t \in {\mathbb
R}$. 
Since $\gamma \in Z_{I(M)}(G)$ and $\sigma_q\sigma_p \in G$, we have $\gamma(q) =
(\sigma_q\sigma_p)\gamma(\sigma_q\sigma_p)^{-1}(q)$ and therefore
\[
\gamma(q) = \sigma_q\bar{\gamma}(q) = \sigma_q\beta(a) = \beta(-a) =
\bar{\gamma}^{-1}(\beta(0))
= \bar{\gamma}^{-1}(q) = \sigma_p\gamma^{-1}\sigma_p^{-1}(q),
\]
which implies $  \sigma _p \gamma \sigma ^{-1} _p = \gamma^{-1} $. 
\end {proof}

\begin{rem} \label {Wolf} A well-known result of Joseph Wolf  states that
in a homogeneous Riemannian manifold $N$ any geodesic loop must be a closed 
geodesic. In fact, let
$\beta : \mathbb R \to N$ be a unit-speed geodesic and let 
$X$ be a Killing field on 
$N$ with
$X(\beta (0)) = \beta ' (0)$. Then it follows from the Killing equation that 
the inner product between $X(\beta (t))$ and $\beta ' (t)$ is a constant
function. The value 
of the inner product at $t = 0$ is equal to $1$. 
Assume that $\beta (0) = \beta (a)$ with some $a\neq 0$. Then the inner product
between
$X(\beta (a)) = X(\beta (0)) = \beta'(0)$ and $\beta ' (a)$ is equal to $1$, and
it follows
from the Cauchy-Schwartz inequality that $\beta '(0)= \beta '(a)$, which shows
that 
$\beta $ is a closed geodesic. 
\end{rem}

\begin {cor}\label {globally}
Let $M=G/K$ be a Riemannian globally symmetric space, 
where $(G,K)$ is an effective symmetric pair. Let 
 $\pi : M \to N= G/ \bar {K}$ be a $G$-equivariant local 
isometry, where the action of $G$  on $N$ is almost effective.   
Then $N$ is a Riemannian globally symmetric space. 
\end {cor}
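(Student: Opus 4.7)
The plan is to descend the geodesic symmetries of $M$ through $\pi$ to global isometries of $N$. Since $M$ is complete and $\pi$ is a local isometry to the connected manifold $N$, the map $\pi$ is a Riemannian covering; let $\Gamma$ denote its deck transformation group. The $G$-equivariance of $\pi$ gives a continuous conjugation action of the connected group $G$ on the discrete group $\Gamma$; this action must be trivial, so $\Gamma \subseteq Z_{I(M)}(G)$.

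Next I check that each $\gamma \in \Gamma$ satisfies the hypothesis of Lemma \ref{translates}. Since $G$ acts transitively on $M$ and centralizes $\gamma$, the displacement $d(q,\gamma(q)) = d(gq,\gamma(gq))$ is constant in $q$, say equal to $c$; in other words, $\gamma$ is a Clifford translation. Let $\beta$ be a unit-speed minimizing geodesic with $\beta(0)=q$ and $\beta(c) = \gamma(q)$. Because $d(\beta(\epsilon),\gamma(\beta(\epsilon)))\equiv c$, the arc-length of any smooth family of curves from $\beta(\epsilon)$ to $\gamma(\beta(\epsilon))$ with $\beta_0 = \beta$ has a minimum at $\epsilon = 0$, so the first variation formula gives $\langle d\gamma_q\, \beta'(0),\beta'(c)\rangle = 1$. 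Since both sides are unit vectors, the Cauchy--Schwarz equality case yields $d\gamma_q\, \beta'(0) = \beta'(c)$, and uniqueness of geodesics then forces $\gamma(\beta(t)) = \beta(t+c)$ for all $t$; hence $\gamma$ translates $\beta$.

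Lemma \ref{translates} now yields $\sigma_p\gamma\sigma_p^{-1} = \gamma^{-1} \in \Gamma$ for every $p \in M$ and every $\gamma \in \Gamma$, so $\sigma_p$ normalizes $\Gamma$ and descends to an isometry $\bar\sigma_{\pi(p)}$ of $N$. Because $\pi$ is a local isometry and $(d\sigma_p)_p = -\Id$, the descended map $\bar\sigma_{\pi(p)}$ fixes $\pi(p)$ with differential $-\Id$ there, and so it is the geodesic symmetry of $N$ at $\pi(p)$. Since $\pi$ is surjective, $N$ admits a globally defined geodesic symmetry at every point and is therefore Riemannian globally symmetric. The only real technical ingredient is the Clifford-translation-plus-first-variation step verifying the hypothesis of Lemma \ref{translates}; the rest is the standard descent of isometries through a Riemannian covering.
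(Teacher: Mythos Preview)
Your proof is correct and follows the same overall architecture as the paper: show that each deck transformation $\gamma$ satisfies the hypothesis of Lemma~\ref{translates}, conclude $\sigma_p\gamma\sigma_p^{-1}=\gamma^{-1}\in\Gamma$, and hence descend the symmetries. The difference lies in how you verify that $\gamma$ translates minimizing geodesics. The paper projects $\beta$ to $N$ via $\pi$, observes that $\pi\circ\beta$ is a geodesic loop in the homogeneous space $N$, and invokes Wolf's result (Remark~\ref{Wolf}) that such loops are closed geodesics; lifting back gives $\gamma(\beta(t))=\beta(t+a)$. You instead work entirely on $M$: from $\Gamma\subset Z_{I(M)}(G)$ you deduce that $\gamma$ is a Clifford translation, and then a first-variation/Cauchy--Schwarz argument forces $d\gamma_q\,\beta'(0)=\beta'(c)$. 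Your route is the classical Clifford--Wolf argument and avoids appealing to the homogeneity of $N$ for this step; the paper's route is slightly shorter given that Remark~\ref{Wolf} is already available. Both are perfectly valid, and the remaining descent step is identical.
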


\begin {proof}
 Let $\Gamma \subset I(M)$ be the group of deck transformations of 
 $N$. We can assume that $\pi$ is not a global isometry, or equivalently, that
$\Gamma$ is non-trivial. Since the action of $G$ on $M$ projects to an action on
$N$, $M$ is connected and $\Gamma$ is discrete, it follows that $G$ normalizes
$\Gamma$. Let ${\rm id}_M \neq \gamma \in \Gamma$, 
 $q\in M$, and let 
$\beta : \mathbb {R} \to M$ be any minimizing 
 geodesic between $\beta (0) 
= q$ and $\beta (a) = \gamma (q)$. 
The geodesics  $\gamma ( \beta (t))$ and $\beta (t)$ in $M$
project 
to the same geodesic $\bar {\beta } (t) = 
\pi (\beta (t)) = \pi (\gamma(\beta (t)))$ in $N$. Since $\bar {\beta} (0) =
\bar { \beta} (a)$, 
it follows from Remark \ref {Wolf}  that the geodesic $\bar {\beta}$ in $N$ is
periodic with period $a$ 
(not necessarily the smallest period). 
This implies that $\gamma (\beta (t)) = \beta (t + a)$ and so 
$\gamma $ translates the geodesic $\beta$. From Lemma 
\ref 
{translates} we get $\sigma _p \gamma \sigma ^{-1} _p 
= \gamma ^{-1}$ for all 
$p\in M$ and $\gamma \in \Gamma$. This implies that the geodesic symmetry
$\sigma _p$ on $M$ descends to a geodesic symmetry 
of $N$. We now conclude that $N$ is globally symmetric.
\end {proof}

\begin{rem}
Conjugation by $\sigma_p$ defines a group automorphism of $\Gamma$, and the
proof of Corollary \ref{globally} shows that this automorphism is given by
$\gamma \mapsto \gamma^{-1}$. This implies that  $\Gamma $ is an abelian group,
which reflects the well-known fact that the homotopy group  of a globally
symmetric space is an abelian group. From Lemma \ref{translates} we also see
that $\Gamma$ must be isomorphic to a direct product ${\mathbb Z}_2 \times
\ldots \times {\mathbb Z}_2$ if $M$ is an inner Riemannian symmetric space.
\end{rem}

\begin{rem}
In this remark we fill a gap in the proof of Lemma 5 on page 491 of \cite {EschenburgOlmos}
for the global symmetry and simplify the arguments. 
In fact, condition (*) in  \cite [page 493] {EschenburgOlmos}
needs not to be  true a priori, since the equality only holds 
for the restriction of those groups to the flat.
Let us keep the notation of \cite{EschenburgOlmos} and prove Lemma 5. 

For any maximal flat $F$ in the globally symmetric space $X$ let
$\tau _F$ be the abelian subgroup of $I_0(X)$ which 
consists of the glide translations along geodesics in $F$. More precisely, 
$\tau _F = \{\text {Exp}(X): X \in \mathfrak p \}$, 
where $\mathfrak p$ 
is the Cartan subspace at some point $p\in F$. The abelian subgroup 
 $\tau _F$ is a normal subgroup of $I_F(M)$, 
the subgroup of $I(X)$ which
leaves $F$ invariant. Since any element of 
$\Gamma _F \subset \Gamma$ acts as a translation 
on $F$ (Sublemma 1 is correct!) it follows that $\tau _F$ commutes with 
$\Gamma _F$. 
In fact, for all $g\in \Gamma _F $ and $X\in \mathfrak p$ we have
$g\text {Exp}(X)g^{-1} = \text {Exp}(g_*(X))
\in \tau _F$. Since $g$ restricted to $F$ is a translation, this implies
$g_*(X) = X$.  

From the assumptions of Lemma 5 one obtains that 
$\{\tilde g \tau _F \tilde g ^{-1}: \tilde g \in \tilde G\}$ 
contains any geometric transvection subgroup 
$\{\text {Exp}
(tX):t\in \mathbb {R}\}$ where $X$ belongs to any Cartan subspace. 
Then $\tilde {G}$ and $\tau _F$ generate $T$, the full transvection group of 
$X$. Since $\tilde {G}$ and $\tau _F$ commute with any element of 
$\Gamma _F$ we conclude that $T$ commutes with $\Gamma _F$. Since, as stated in 
Sublemma 1, $\Gamma$ is the union of $\Gamma _F$, $F$ an arbitrary flat, we
obtain 
that $T$ commutes with $\Gamma$. 
Since the geodesics in $M$ have no self-intersection (since they lie in 
a globally symmetric 1-1 immersed flat), we have that any $\gamma \in \Gamma$
satisfies the 
assumptions of Lemma \ref {translates}. 
Then $\sigma _p (\Gamma) = \Gamma$ and so the geodesic symmetry $\sigma _p$ 
descends from $X$ to the quotient 
$M$, which implies that $M$ is globally symmetric. 
This completes the proof of Lemma 5 on page 491 of \cite {EschenburgOlmos}.
\end{rem}

Let $M = G/K$ be a connected, simply connected, Riemannian symmetric space, 
where $(G,K)$ is an effective  symmetric pair. Denote by ${\mathfrak g}$ the Lie
algebra of $G$. Assume that every Killing field  $X$ on $M$, $X\in \mathfrak g$,
is bounded. 
This is equivalent to saying that the de Rham decomposition of $M$ does not
contain a Riemannian symmetric space of noncompact type. 
Let  $M = \mathbb R ^k \times M_1 \times  ... \times M_r$ 
be the de Rham decomposition of $M$ ($k = 0$ is possible)
and let us write 
$$G/K = \mathbb R^k \times (G_1/K_1) \times ... \times (G_r/K_r).$$ 
where $M_i = G_i/K_i$ is a connected, simply connected, Riemannian symmetric
space of compact type.
If $M_i$ is not of group type, then $G_i$ is a compact simple Lie group. 
If $M_i$ is of 
group type then $G_i = \bar {G}_i\times \bar {G}_i$ 
where $\bar {G}_i$ is a compact simple
Lie group and $K_i = \text {diag} (\bar {G}_i\times \bar {G}_i)$. 
Moreover, $M_i \simeq \bar {G}_i$. 

Choose $p= (p_0, \ldots , p_r) \in M$ so that $K = G_p$ is the isotropy subgroup
of $G$ at $p$. 
Then the isotropy representation of 
$K$ on $T_pM$ is, up to the trivial representation on 
$\mathbb R ^k$, 
 the direct sum of the irreducible representations of 
$K_i$ on $T_{p_i}M_i$.

\begin{defin} The Lie algebra $\mathfrak g_i$ 
of $G_i$ ($i=1, \ldots , r$), considered as a subalgebra of ${\mathfrak g}$,
is called a {\it symmetric irreducible factor} 
of $\mathfrak g$.
\end{defin}

Note that a symmetric irreducible factor of $\mathfrak g$ 
is either a  simple Lie algebra or the direct sum of a simple Lie algebra with
itself.

Let  $N=G/\bar {K}$ be a  Riemannian symmetric space which is 
not necessarily simply 
connected. We assume that $N$ is equivariantly 
covered by $M=G/K$; see Corollary 
\ref {globally}.
Then the autoparallel distributions on $M$ corrresponding to the factors in the
de Rham decomposition of $M$ induce autoparallel distributions on $N$. In fact,
any element $\gamma$ 
in the group $\Gamma$ of deck 
transformations of the projection $\pi : M \to N$ commutes with the transvection
group $G$ of $M$. 
This implies that $\gamma$ preserves the  autoparallel distribution 
 on $M$ associated to any of its de Rham factors.  
If $\bar K^o$ is the connected component of $\bar K$, then, as for the simply 
connected case, the isotropy representation of $\bar K^o$ decomposes, 
up to a trivial representation, as a direct sum of irreducible representations.

The following lemma is easy to prove.

\begin{lem}\label {irreduciblesym} Let $N=G/\bar {K}$ be a  
Riemannian globally symmetric space, where $G$ is the group of 
transvections ($N$ is not assumed to be  simply connected). 
Let 
$\tilde {\mathfrak g} '$ be an ideal of 
$\mathfrak g$ that contains the 
abelian part of $\mathfrak g $. Assume 
 that $\tilde {G}'$ does not act transitively on $N$, where 
$\tilde {G}'$ is the normal Lie subgroup of $G$ with Lie algebra
$\tilde {\mathfrak g} '$. Let $\hat {\mathfrak g}$ be a complementary 
ideal to $\tilde {\mathfrak g} '$.  
Then $\hat {\mathfrak g}$ contains an irreducible symmetric 
factor $\mathfrak g _{i}$ of $\mathfrak g$.
\end{lem}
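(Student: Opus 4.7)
The approach is to reduce to the Riemannian universal cover $M$, exploit the fact that ideals of the reductive Lie algebra $\mathfrak g$ split cleanly, and then do a case analysis on the symmetric irreducible factors.

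First, since the projection $\pi\colon M\to N$ is surjective and $G$-equivariant, transitivity of $\tilde G'$ on $M$ would imply transitivity on $N$; so the hypothesis gives that $\tilde G'$ is not transitive on $M$ either. Hence it suffices to work on $M$.

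Next, I would recall that $\mathfrak g$ is reductive with center $\mathbb R^k$ and semisimple part $[\mathfrak g,\mathfrak g]=\mathfrak g_1\oplus\cdots\oplus\mathfrak g_r$, where each $\mathfrak g_i$ is either simple or of the form $\bar{\mathfrak g}_i\oplus\bar{\mathfrak g}_i$. Because the adjoint representation of $[\mathfrak g,\mathfrak g]$ on itself contains no trivial summand, every ideal of $\mathfrak g$ is a direct sum of a subspace of $\mathbb R^k$ and a sum of simple ideals of $[\mathfrak g,\mathfrak g]$. Since $\tilde{\mathfrak g}'$ contains the whole center, the complementary ideal $\hat{\mathfrak g}$ consists only of simple ideals from the semisimple part, and consequently
\[
\mathfrak g_i=(\mathfrak g_i\cap\tilde{\mathfrak g}')\oplus(\mathfrak g_i\cap\hat{\mathfrak g})
\]
for every $i$.

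Finally, I would exploit the product structure $M=\mathbb R^k\times M_1\times\cdots\times M_r$. Since $\mathfrak g_i$ acts only on the factor $M_i$, $\tilde G'$ is transitive on $M$ if and only if the connected subgroup $G_i'\subset G_i$ with Lie algebra $\mathfrak g_i\cap\tilde{\mathfrak g}'$ is transitive on $M_i$ for every $i$. By hypothesis this fails for some $i$. If $\mathfrak g_i$ is simple, every non-zero ideal equals $\mathfrak g_i$ itself and would act transitively, so non-transitivity forces $\mathfrak g_i\cap\tilde{\mathfrak g}'=0$, i.e.\ $\mathfrak g_i\subset\hat{\mathfrak g}$. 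If $\mathfrak g_i=\bar{\mathfrak g}_i\oplus\bar{\mathfrak g}_i$ is of group type, then $M_i=\bar G_i$ and each of the two simple summands already acts transitively on $M_i$ by left or right translation; again the only way to fail transitivity is $\mathfrak g_i\cap\tilde{\mathfrak g}'=0$ and therefore $\mathfrak g_i\subset\hat{\mathfrak g}$. In both cases $\hat{\mathfrak g}$ contains the symmetric irreducible factor $\mathfrak g_i$, as desired. The only subtlety is the group-type case, where $\mathfrak g_i$ has proper non-zero ideals; the key observation is that already one copy of $\bar{\mathfrak g}_i$ acts transitively on $\bar G_i$ by one-sided translation, so non-transitivity on $M_i$ really does force all of $\mathfrak g_i$ into $\hat{\mathfrak g}$.
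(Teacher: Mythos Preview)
Your proof is correct. The paper itself does not supply a proof of this lemma; it simply states that ``the following lemma is easy to prove'' and leaves the verification to the reader. Your argument --- lifting to the simply connected cover, decomposing ideals of the reductive algebra $\mathfrak g$ into the center plus simple pieces, and then observing that in both the simple and group-type cases any nonzero ideal of $\mathfrak g_i$ already acts transitively on $M_i$ --- is exactly the natural way to make this precise, and is consistent with the setup the paper provides in the paragraphs preceding the lemma. The only point worth highlighting, which you do, is the group-type subtlety: $\mathfrak g_i=\bar{\mathfrak g}_i\oplus\bar{\mathfrak g}_i$ has proper nonzero ideals, but each single copy of $\bar{\mathfrak g}_i$ still acts transitively on $\bar G_i$ by one-sided translation, so non-transitivity really forces the whole $\mathfrak g_i$ into $\hat{\mathfrak g}$. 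This is in line with Remark~\ref{remark} immediately following the lemma, which discusses exactly what happens when $\hat{\mathfrak g}$ picks up only one of the two factors.
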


\begin{rem} \label {remark} 
If in the situation of Lemma \ref{irreduciblesym} 
the symmetric space $N$ is simply connected,
and if $\hat G$ contains only one of the two factors of $G_i = \bar G_i\times 
\bar G_i$, 
where  $M_i$ is a de Rham factor  of group type, 
then $\hat G/\hat G_p$ is not a 
symmetric presentation of the symmetric orbit $\hat G \cdot p$, $p\in N$. 
\end{rem}

\section {Symmetric autoparallel distributions}

Let $M=G/H$ be an $n$-dimensional compact homogeneous Riemannian manifold, where
$n \geq 2$ and
$G$ is a connected Lie subgroup of $I(M)$.  Let 
$\mathcal {D}$ 
be an autoparallel $G$-invariant distribution on 
$M$ of rank $r > 0$. We denote by $k = n-r = \dim(M) - {\rm rk}({\mathcal D})$
the corank of ${\mathcal D}$. The  maximal integral manifold of $\mathcal {D}$
containing a point $p \in M$ 
will be denoted by $L(p)$. Note that $L(p)$ is a totally geodesic 
submanifold of $M$ since ${\mathcal D}$ is autoparallel. For all $g \in G$ and
$p \in M$ such that $g(L(p)) = L(p)$ we denote by $g_{\vert L(p)}$ the isometry
on $L(p)$ which is obtained by restricting $g$ to $L(p)$. If $X$ is a Killing
field on $M$ which is induced by $G$, then we denote by $X_{\vert L(p)}$ the
restriction of $X$ to $L(p)$.

\begin{defin}
The $G$-invariant autoparallel
distribution $\mathcal D$ is  {\it strongly symmetric} 
with respect 
to $G$ if every integral manifold $L(p)$ of $\mathcal {D}$ is a 
globally symmetric space and the identity component of
$\{g_{\vert L(p)}: g(L(p)) = L(p),\ g \in G\}$ 
contains the 
transvection group of $L(p)$  (or equivalently, since the Killing 
fields associated to $G$ are bounded, coincides with the 
transvection group of $L(p)$).
\end{defin}

From Corollary \ref {globally} one has the following  
equivalent definition: 

\begin{defin} 
The $G$-invariant autoparallel 
distribution $\mathcal D$ is 
{\it strongly symmetric} with respect 
to $G$ if for every $p \in M$ and every $v\in {\mathcal D}_p$ there exists a 
Killing field $X$ on $M$ which is induced by 
$G$ such that $X_p = v$ and  $X_{\vert L(p)}$ 
is parallel at $p$.
\end{defin}

Let $M = G/H$ be a compact homogeneous Riemannian manifold and let 
$\mathcal D$ be a non-trivial $G$-invariant distribution 
on $M$ which is strongly symmetric with respect to $G$. 
Let $\mathfrak g = T_eG$ be the Lie algebra of $G$, where any  element 
$X$ 
of $\mathfrak g$ is identified with  the Killing field $p \mapsto X.p
= \frac {d}{dt}_{\vert t=0}\text {Exp}(tX)(p)$ of $M$.
It is important to note that with 
 this identification the brackets change sign, since the Killing field 
$X$ is related via the isometry $g$ with the right-invariant vector field of $G$
with initial condition $X\in T_eG$. 
 The subspace
$$\mathfrak g ^{\mathcal D} : = \{X\in \mathfrak g : 
X \text { lies on }\mathcal D\}$$
of ${\mathfrak g}$  is an ideal of $\mathfrak g$ 
since $\mathcal D$ is $G$-invariant.

\begin{lem} \label {premain} Let 
$\mathfrak g '\subset \mathfrak g$ be a complementary 
ideal of $\mathfrak g ^{\mathcal D}$ and 
let $G'$ be the normal subgroup of $G$ with Lie algebra $\mathfrak g'$.
Then $2\dim(G') \leq  k (k + 1)$, where 
$k = n - r = \dim (M) - {\rm rk} (\mathcal D)$ is the corank of $\mathcal D$.
Moreover, for $k\geq 2$, the equality holds if and only if the universal
covering group 
of $G'$ is $\Spin (k+1)$. For $k=1$,  
$\mathcal {D}$ is a parallel distribution and the Riemannian universal covering
space of $M$ 
splits off a line (and then $M$ is locally symmetric).  
\end{lem}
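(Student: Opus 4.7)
The plan is to analyze the evaluation map $\varepsilon\colon \mathfrak g' \to T_pM$, $X\mapsto X_p$, together with its composition with the orthogonal projection onto $\mathcal D_p^\perp$, exploiting that $\mathfrak g^{\mathcal D}$ and $\mathfrak g'$ are commuting ideals of $\mathfrak g$. Since $G$ acts transitively on $M$, $\mathfrak g$ evaluates onto $T_pM$; as $\mathfrak g^{\mathcal D}$ lands inside $\mathcal D_p$, the projection of $\varepsilon(\mathfrak g')$ onto $\mathcal D_p^\perp$ is surjective, so $\dim(G'\cdot p) \geq k$.

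The core step is to show that the kernel of this projection coincides with the isotropy subalgebra $\mathfrak g'_p$. Given $X\in\mathfrak g'$ with $X_p\in\mathcal D_p$, the commutation $[X,Y]=0$ for $Y\in\mathfrak g^{\mathcal D}$ implies $X_{\Exp(tY)(p)} = d\Exp(tY)_p(X_p)$, and because each $\Exp(tY)$ preserves $\mathcal D$ the field $X$ stays tangent to $\mathcal D$ along the $G^{\mathcal D}$-orbit of $p$. By strong symmetry (combined with the decomposition of the transvection algebra of $L(p)$ along the ideal decomposition $\mathfrak g = \mathfrak g^{\mathcal D}\oplus\mathfrak g'$) this orbit is all of $L(p)$, so $X|_{L(p)}$ is a Killing field of $L(p)$ commuting with its transvection group. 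Since $M$ is compact, $L(p)$ is a compact semisimple symmetric space, and the centralizer of its transvection algebra in the isometry algebra is trivial, forcing $X|_{L(p)}=0$ and in particular $X_p=0$; hence $\dim(G'\cdot p)=k$. For the isotropy, any $g\in G'_p$ commutes with $G^{\mathcal D}$ and fixes $p$, so $g(h(p))=h(p)$ for every $h\in G^{\mathcal D}$; transitivity of $G^{\mathcal D}$ on $L(p)$ forces $g$ to fix $L(p)$ pointwise, so $dg_p$ is the identity on $\mathcal D_p$ and $G'_p$ embeds into the orthogonal group of $\mathcal D_p^\perp$, giving $\dim G'_p \leq k(k-1)/2$. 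Summing the two bounds yields $2\dim G' \leq k(k+1)$.

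For the equality case when $k\geq 2$, the bound is attained only when $(G'_p)^o = \SO(k)$, acting by the standard representation on $\mathcal D_p^\perp\cong\mathfrak g'/\mathfrak g'_p$. Writing $\mathfrak g'=\so(k)\oplus\mathfrak m$ with $\mathfrak m\cong\mathbb R^k$, the bracket $\Lambda^2\mathfrak m\to\mathfrak g'$ is $\so(k)$-equivariant; the isomorphism $\Lambda^2\mathbb R^k\cong\so(k)$ of $\so(k)$-modules (with a single extra $\mathbb R^3$-summand when $k=3$, pinned down by the Jacobi identity) leaves the bracket determined up to a scalar, and the compactness of $G'$ rules out both the Euclidean real form $\so(k)\ltimes\mathbb R^k$ and the noncompact cousin $\so(1,k)$, so $\mathfrak g'\cong\so(k+1)$ and the simply connected cover of $G'$ is $\Spin(k+1)$.

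Finally, for $k=1$ the bound gives $\dim\mathfrak g' \leq 1$; $\mathfrak g = \mathfrak g^{\mathcal D}$ would imply $\mathcal D = TM$ by transitivity, so $\dim\mathfrak g' = 1$. A one-dimensional $\Ad(G)$-invariant subspace of $\mathfrak g$ is trivial as a representation of the compact connected group $G$, so a generator $X$ of $\mathfrak g'$ is central in $\mathfrak g$ and hence $G$-invariant as a Killing field. Therefore $|X|$ is constant, which for a Killing field forces $\nabla_X X = 0$ and closed geodesic orbits, so $\mathcal D^\perp$ is autoparallel; combined with the autoparallelism of $\mathcal D$ the distribution is parallel, the universal cover of $M$ splits off a line by the de Rham theorem, and the complementary leaf factor is symmetric by strong symmetry, so $M$ is locally symmetric. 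The main technical obstacle I expect is the kernel step: one must establish both that $G^{\mathcal D}$ acts transitively on $L(p)$ and that the centralizer of the transvection algebra of $L(p)$ inside its isometry algebra is trivial; both rely on $L(p)$ having no Euclidean de Rham factor, which is guaranteed by the compactness of $M$.
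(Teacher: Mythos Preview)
Your argument contains a genuine gap at the ``core step.'' You need that the $G^{\mathcal D}$-orbit through $p$ is all of $L(p)$, and that $L(p)$ has no Euclidean de Rham factor; you flag both at the end and claim they follow from compactness of $M$, but neither does. The paper explicitly notes that the integral manifolds of $\mathcal D$ ``are not necessarily closed submanifolds of $M$ (when they have a Euclidean local factor),'' and Remark~\ref{exampleintersection} gives a normal homogeneous example in which $\mathcal D$ is built from the abelian part of $\mathfrak g$, so the leaves are flat. More importantly, the transvection algebra of $L(p)$ comes from $\{Z\in\mathfrak g: Z_{|L(p)}\text{ is tangent to }L(p)\}$, which properly contains $\mathfrak g^{\mathcal D}$ in general: an element of $\mathfrak g'$ can be tangent to $\mathcal D$ along the single leaf $L(p)$ without lying in $\mathcal D$ globally. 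This is exactly the subgroup $\bar G'^{\,q}$ analysed after the lemma, and Remark~\ref{exampleintersection} shows $\mathfrak u=\bar{\mathfrak g}'^{\,p}\cap\mathfrak g^{\mathcal D}$ can be nonzero. Consequently your claim that $X\in\mathfrak g'$ with $X_p\in\mathcal D_p$ forces $X_p=0$ (hence $\dim(G'\cdot p)=k$) is unjustified, and the isotropy bound $\dim G'_p\le\binom{k}{2}$ via ``$g$ fixes $L(p)$ pointwise'' fails for the same reason.

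The paper avoids all of this by passing to the \emph{local} leaf space $\bar U=U/\mathcal F$ (a $k$-dimensional manifold) and showing that the projection $\mathfrak g\to\mathcal K(\bar U)$ has kernel exactly $\mathfrak g^{\mathcal D}$; hence $\mathfrak g'$ injects into the Killing algebra of a $k$-manifold, giving $2\dim\mathfrak g'\le k(k+1)$ directly, with equality forcing $\bar U$ to have constant curvature and $\mathfrak g'\simeq\so(k+1)$. For $k=1$ your sketch also needs patching: the central Killing field $X$ has geodesic integral curves, but you have not shown $\mathbb R X=\mathcal D^\perp$, so the ``two orthogonal autoparallel distributions $\Rightarrow$ parallel'' step is not available. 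The paper instead argues that $G^{\mathcal D}$ acts on $M$ with cohomogeneity one and no singular orbits (since $G=G^{\mathcal D}\cdot G'$ with $\dim G'=1$), whence the distribution \emph{orthogonal} to the orbits is autoparallel, and the conclusion follows.
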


\begin {proof} Since the integral manifolds 
of $\mathcal  D$ are not necessarily
closed submanifolds of $M$ 
(when they have a Euclidean local factor), 
we will consider, locally, the quotient 
space of  $M$ by the foliation  given by the maximal
integral manifolds of $\mathcal D$. Let $p\in M$ and let 
$U$ 
be the domain of a Frobenius chart of   the
the distribution $\mathcal D$ in a neighborhood of $p$. Let us denote by
$\mathcal F$ 
the foliation of $U$ given by the maximal integral manifolds of 
$\mathcal D _{\vert U}$ and by  $\bar U = U/\mathcal F$ the quotient space. Let
$\pi: U\to \bar U$  
be the canonical projection. 
 Any $Z \in  {\mathfrak g}$, 
regarded as a Killing field on $U$, projects via $\pi$ to a vector field 
$\bar {Z}$ on $\bar U$, since any $g\in G$ which is close to the identity 
preserves (locally) the 
foliation $\mathcal F$. We have that $\bar Z = 0$ if and only if 
$Z_{\vert U}$ is tangent to the distribution
$\mathcal D _{\vert U}$. 

Let $p\in U$ be fixed and let $q=g(p)\in U$. Since $\mathcal D$ is 
$G$-invariant, we have $Z_q \in \mathcal D _q$ if and only if 
$\Ad (g)(Z)_p \in \mathcal D _p$. Let  $\Omega$ be a neighbourhood 
of the identity $e$ in $G$ such that $\{ g(p) : g \in \Omega\} \subset U$. Then, if $\bar Z
=0$, 
we have $\Ad (g)(Z)_p 
\in \mathcal D _p$ for all $g\in \Omega$. Since $\Omega$ generates 
$G$ this gives $\Ad (g)(Z)_p 
\in \mathcal D _p$ for all $g\in G$. This implies that 
the Killing field 
$Z$ is tangent to $\mathcal D$.  
Therefore, $Z \in \mathfrak g ^{\mathcal {D}}$ if and only if $\bar Z =0$.

Let us now consider the normal homogeneous Riemannian metric on
$M = G/H$. This metric, 
restricted to $U$, projects via $\pi$ to a Riemannian metric on $\bar U$. 
In fact, if $p\in U$, 
$\bar U$ can be locally regarded as $G/\tilde H$, 
where $\tilde H \supset H $ is the Lie subgroup 
of $G$ which leaves invariant $L(p)$. So any 
element $Z \neq 0$ in the complementary ideal $\mathfrak g '$ 
of $\mathfrak g ^{\mathcal D}$ can be regarded as 
a non-trivial Killing field on $\bar U$. 
If $\bar p = \pi (p)$, then 
the map $j: \mathfrak g '\to 
T_{\bar p}\bar U \times \so (T_{\bar p }\bar U)$,
$j(Z) = (\bar Z _{\bar p}, 
(\nabla \bar Z)_{\bar p})$, which 
assigns to $Z$ the initial conditions  of the Killing field 
$\bar Z$ at $\bar p$, is injective. Then,  since  
$k = \dim(\bar U)$, we conclude that
$2\dim(G')\leq k(k+1)$.  

We now consider the injective Lie algebra homomorphism 
$\pi_*: \mathfrak g ' \to \mathcal K (\bar U), \ 
Z \mapsto \bar Z$,
where $\mathcal K (\bar U)$ denotes the Lie algebra  of Killing fields on 
$\bar U$ with the projection of  the normal homogeneous metric on $M$ and
where the bracket 
on $\mathfrak g '$ is the bracket of Killing fields. 
Note that   
$2\dim (\mathfrak g ') \leq 2\dim (\mathcal {K}(\bar U))\leq k(k+1)$.  
It follows that $\bar U$ has constant curvature when    $2\dim (\mathfrak g ') 
=  k(k+1)$.  In this case, since $\mathfrak g '$ 
admits a bi-invariant metric, 
we get $\mathfrak g ' \simeq \mathcal {K}(\bar U)\simeq \mathfrak {so}(k+1)$. 
Then the universal covering group of $G'$ is $\Spin (k+1)$ if $k\geq 2$.

For $k=1$ we have $\dim (G') = 1$. If $G^{\mathcal D}$ is the normal
Lie subgroup 
of $G$ with Lie algebra $\mathfrak g ^{\mathcal D}$, then 
the $G^{\mathcal D}$-orbits in $M$ coincide with the integral manifolds 
$L(q)$ of $\mathcal D$. In fact, $G^{\mathcal D}$ cannot be transitive on $M$ 
since the orbit $G^{\mathcal D} \cdot q$ is contained in $L(q)$ for all $q\in
M$. 
Therefore, since $G$ is transitive 
on $M$ and $\dim (\mathfrak g ^{\mathcal D}) = \dim (\mathfrak g) -1$, 
any $G^{\mathcal D}$-orbit must have codimension one and therefore 
$G^{\mathcal D} \cdot q = L(q)$. 
Thus $G^{\mathcal D}$ acts on $M$ 
with cohomogeneity one and without singular orbits. 
In fact, since 
$G^{\mathcal D}$ is a normal subgroup of $G$, we have $G^{\mathcal D}\cdot g(q)
= g(G^{\mathcal D} \cdot q)$ for all 
$g\in G$. Then the $1$-dimensional distribution 
perpendicular to the 
$G^{\mathcal D}$-orbits (or equivalently, orthogonal 
to $\mathcal D$),  is autoparallel. 
Since $\mathcal D$ is also an autoparallel distribution, 
then both distributions must be parallel.  
This implies that the Riemannian universal 
covering space of $M$ splits off a line. 
\end {proof}

\begin{rem} \label {effectively}
The normal subgroup $G^{\mathcal D}$ of $G$ with Lie algebra 
$\mathfrak g ^{\mathcal D}$ 
acts effectively on every integral manifold $L(q)$ of 
${\mathcal D}$. In fact,  as in Lemma \ref {premain}, let $G'$
be the normal Lie subgroup of $G$ associated with a complementary 
ideal of $\mathfrak g ^{\mathcal D}$. This gives an almost direct product 
$G= G^{\mathcal D}\times G'$. 
Since $G$ is transitive on $M$, the subgroup $G'$ acts transitively 
on the family $\{L(q):q\in M\}$.  Let $g\in G^{\mathcal D}$ and 
$p\in M$  such that 
$g_{\vert L(p)} : L(p) \to L(p)$ is the identity, and let $L(q)$ be another 
maximal integral manifold of $\mathcal D$. Then there exists $g'\in G'$ such that 
$g'(L(p)) = L(q)$. Let $q'= g'(p') \in L(q)$ with $p' \in L(p)$. Then 
$g(q') = g(g'(p'))= g'(g(p'))= g'(p') = q'$, and thus $g=e$. 
\end{rem}

We continue with the notations and  assumptions of Lemma \ref{premain}. 
Let $q \in M$ and define 
$$\bar G^q = \{ g_{\vert L(q)}: g(L(q))= L(q),\, g\in G\}^o,$$
which coincides with the transvection group of $L(q)$ since 
$\mathcal D$ is strongly symmetric with respect to $G$. 
Let $G_q$ be the isotropy group of $G$ at $q$ and define
$$\bar K^q = \{g_{\vert L(q)}: g\in G_q\}.$$
Then $\bar G^q/\bar K^q$ is a symmetric presentation of the symmetric 
space $L(q)$. Note that $G_q$ and hence $\bar K^q$ are connected if
$M$ is simply connected. 

The subgroup
$$\bar G'^q = \{ g_{\vert L(q)}: g(L(q))= L(q),\, g\in G'\}^o$$
is a normal subgroup of $\bar G^q$ 
and commutes with $G^{\mathcal D}$ and 
$\bar G^q = \{ g_1g_2 : g_1 \in \bar G'^q, g_2 \in G^{\mathcal D}\}$, 
where $G^{\mathcal D}$ is 
identified with its restriction to  $L(q)$. 
In general $\bar G'^q$ and  $G^{\mathcal D}$  intersect in a 
normal subgroup of $\bar G'^q$  with positive dimension. Let 
$\bar {\mathfrak  g}'^q$ be the Lie algebra of $\bar G'^q$ and define 
$\mathfrak u = \bar {\mathfrak  g}'^q\cap \mathfrak g ^ {\mathcal D}$.
Let $\hat {\mathfrak g}$ be a complementary ideal to $\mathfrak u$ in 
$\mathfrak g ^ {\mathcal D}$. Note that $\hat {\mathfrak g} $ 
is an ideal of the Lie algebra $\bar {\mathfrak  g} ^q$ of $\bar G^q$ which
can be identified with 
an ideal of $\mathfrak g$ which does not depend on the choice of $q\in M$. 
If $\hat G \subset G^{\mathcal D}$ denotes the normal Lie 
subgroup of $G$ associated 
with $\hat {\mathfrak g}$, we have 
$$\ \ \ \ \ \ \bar G^q = \bar G'^q\times \hat G \ \ \ \ \ \text 
{(almost direct product)}.$$

Recall that $\bar G^q/\bar K^q$ is a symmetric presentation of 
$L(q)$ and that $\bar {\mathfrak  g} '^q$ is an ideal 
of $\bar {\mathfrak g}^q$. 
Since $G'$ is transitive on the family $\{L(q): q \in M\}$ 
(see Remark \ref {effectively}), we
see that $G'$ is transitive on $M$ if and only if $\bar G'^q$ 
is transitive on $L(q)$ for some (or equivalently, for all) 
$q\in M$. 
Let $\hat {\mathfrak g }_0$ be the abelian part of 
$\hat {\mathfrak g}$ (which is regarded,
depending on the context, as an ideal of $\mathfrak g$ 
or as an ideal 
of $\bar {\mathfrak g}^q$).
Moreover, let $\check {\mathfrak {g}} '= \mathfrak {g} '\oplus 
\hat {\mathfrak g }_0$, and let $\check G '$ be the
Lie subgroup of $G$ with Lie algebra $\check {\mathfrak {g}} '$. Since 
$M$ is simply connected, we observe from Remark \ref {semisimple}  that 
$G'$ acts transitively on $M$ if and only if $\check G '$ acts 
transitively on $M$.  But this is equivalent to the fact that 
$\bar H'^q$ acts transitively on $L(q)$, where $\bar H'^q$ 
is the (normal) Lie subgroup of $\bar G^q$ which is associated 
with the ideal $\bar {\mathfrak g}'^q \oplus \hat {\mathfrak g} _0$
of $\bar {\mathfrak g}^q$. Note that this ideal contains 
always the abelian part of $\bar {\mathfrak g}^q$.
If $G'$ is not transitive on $M$, then  
$\bar H'^q$ is not transitive 
on $L(q)$. It follows from Lemma \ref {irreduciblesym} that 
the semisimple part of $\hat {\mathfrak g}$, which is a complementary 
ideal of $\bar {\mathfrak g}'^q \oplus \hat {\mathfrak g} _0$, has an 
irreducible symmetric factor $\mathfrak  g _{irr}$. 
Thus $\hat {\mathfrak g}$ has an 
irreducible symmetric factor $\mathfrak  g _{irr}$. Note that  
$\mathfrak  g _{irr}$  is an ideal of $\mathfrak g ^{\mathcal D}$ which does not depend on $q\in M$. 
Thus we have proved the following lemma:

\begin{lem}\label {ISF} If $G'$ is not transitive on $M$, then 
 $\hat {\mathfrak g}$ has a non-trivial 
irreducible symmetric factor $\hat {\mathfrak g}_{irr}$.
\end{lem}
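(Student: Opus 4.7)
The plan is to reduce the non-transitivity of $G'$ on $M$ to a non-transitivity statement on a single leaf $L(q)$, and then apply Lemma \ref{irreduciblesym} to the symmetric space $L(q)$ with its transvection group $\bar G^q$. The global non-transitivity on $M$ will thereby force one of the ideals of $\bar{\mathfrak g}^q$ to contain an irreducible symmetric factor, which will end up sitting inside $\hat{\mathfrak g}$.

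For the leaf-level reduction, I would first use Remark \ref{effectively}: since $G = G^{\mathcal D} \times G'$ (almost direct product) and $G^{\mathcal D}$ preserves each leaf, $G'$ acts transitively on the family of leaves $\{L(q) : q \in M\}$. Hence $G'$ is transitive on $M$ if and only if $\bar G'^q$ is transitive on $L(q)$ for some (equivalently every) $q$. To be in a position to apply Lemma \ref{irreduciblesym}, however, I need an ideal of $\bar{\mathfrak g}^q$ that contains the whole abelian part of $\bar{\mathfrak g}^q$. So I would enlarge $\bar{\mathfrak g}'^q$ by the abelian part $\hat{\mathfrak g}_0$ of $\hat{\mathfrak g}$, getting the ideal $\bar{\mathfrak g}'^q \oplus \hat{\mathfrak g}_0$, and correspondingly replace $\mathfrak g'$ by $\check{\mathfrak g}' = \mathfrak g' \oplus \hat{\mathfrak g}_0$. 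Invoking simple connectedness of $M$ (through the semisimple-versus-abelian remark cited in the text), this enlargement does not change transitivity, so $G'$ transitive on $M$ is equivalent to the subgroup $\bar H'^q$ associated with $\bar{\mathfrak g}'^q \oplus \hat{\mathfrak g}_0$ being transitive on $L(q)$.

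Now I would apply Lemma \ref{irreduciblesym} to the globally symmetric space $L(q) = \bar G^q/\bar K^q$, taking $\tilde{\mathfrak g}' = \bar{\mathfrak g}'^q \oplus \hat{\mathfrak g}_0$, which contains the abelian part of $\bar{\mathfrak g}^q$ by construction. By hypothesis $\bar H'^q$ is not transitive on $L(q)$, so a complementary ideal must contain an irreducible symmetric factor $\hat{\mathfrak g}_{irr}$ of $\bar{\mathfrak g}^q$. Since the semisimple part of $\hat{\mathfrak g}$ is a complementary ideal to $\bar{\mathfrak g}'^q \oplus \hat{\mathfrak g}_0$ in $\bar{\mathfrak g}^q$ (using the almost direct product $\bar G^q = \bar G'^q \times \hat G$), this irreducible factor sits inside the semisimple part of $\hat{\mathfrak g}$, and hence is an irreducible symmetric factor of $\hat{\mathfrak g}$ itself. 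Finally I would observe that $\hat{\mathfrak g}$ is identified as an ideal of $\mathfrak g$ independent of $q$, so the same holds for $\hat{\mathfrak g}_{irr}$.

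The main obstacle I expect is the bookkeeping around the transitivity equivalences: passing back and forth between $G' \subset G$ acting on $M$, $\check G'$ acting on $M$, $\bar G'^q$ acting on $L(q)$, and $\bar H'^q$ acting on $L(q)$. The delicate point is that enlarging by the abelian ideal $\hat{\mathfrak g}_0$ does not alter transitivity, which relies essentially on $M$ being simply connected (so that abelian subgroup orbits are closed inside the larger semisimple orbits). Once this is secured, Lemma \ref{irreduciblesym} does all the remaining work.
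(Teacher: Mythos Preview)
Your proposal is correct and follows essentially the same route as the paper: reduce transitivity on $M$ to transitivity of $\bar H'^q$ on the leaf $L(q)$ via Remark~\ref{effectively} and Remark~\ref{semisimple}, then invoke Lemma~\ref{irreduciblesym} with $\tilde{\mathfrak g}' = \bar{\mathfrak g}'^q \oplus \hat{\mathfrak g}_0$ to locate an irreducible symmetric factor inside the semisimple part of $\hat{\mathfrak g}$. The only quibble is your parenthetical explanation of why the abelian enlargement preserves transitivity: the actual mechanism in Remark~\ref{semisimple} is that otherwise one would obtain a cohomogeneity-one action without singular orbits, forcing $M$ to split off a line and contradicting compactness and simple connectedness, rather than anything about closure of abelian orbits.
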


\begin{rem}\label{exampleintersection}
Here we will present an example where $\mathfrak u$ is non-trivial. Let $M = G/H$ be
a normal homogeneous space and decompose $\mathfrak g = \mathfrak
g_\mathrm{ss} \oplus \mathfrak g_\mathrm{ab}$ as a direct sum of
ideals, where $\mathfrak g_\mathrm{ss}$ is semisimple and $\mathfrak
g_\mathrm{ab}$ is abelian. Assume that $\dim(\mathfrak
g_\mathrm{ab}) \ge 2$. Let $p = [e]$ and let $\mathbb V \subset
T_pM$ be the subspace of fixed vectors of $H$ (via the isotropy
representation). Let $\mathbb W \subset \mathbb V$ be a subspace of
codimension one. We choose $X_1, \ldots, X_{k - 1} \in \mathfrak
g_\mathrm{ab}$ such that $X_1.p, \ldots, X_{k - 1} .p$ is
a basis of $\mathbb W$. Let $\mathcal D$ be the $G$-invariant
distribution on $M$ with $\mathcal D_p = \mathbb W$. Note that
$\mathcal D$ is strongly symmetric with respect to $G$ (see
\cite{OlmosReggianiTamaru}). Let $X_k \in \mathfrak g_\mathrm{ab}$ be such that $X_k.p \in \mathbb V - \mathbb W$. Then $\mathfrak g^{\mathcal D} $ is the linear span of $X_1, \ldots, X_{k - 1}$. Moreover, if we
define $\mathfrak g' = \mathfrak g_\mathrm{ss} \oplus \mathbb R(X_{k -
1} + X_k)$, then we obtain $\mathfrak u = \bar{\mathfrak g}'^{p}
\cap \mathfrak g^{\mathcal D} = \mathbb RX_{k-1|L(p)}$.
\end{rem}

\begin{thm}\label {main} Let $M= G/H$ be an $n$-dimensional compact 
simply connected 
homogeneous Riemannian manifold, $n \geq 2$. Let $\mathcal D$
be a $G$-invariant distribution on $M$ of rank $r > 0$ which is   strongly 
symmetric  with respect to $G$, and denote by $k = n - r$  
the corank of ${\mathcal D}$. Assume 
that $M$ does not split off a symmetric factor whose associated 
parallel distribution on $M$ is contained in $\mathcal D$.
Then $k\geq 2$ and there exists a normal semisimple  Lie subgroup  
$G'$ of $G$ with $2\dim (G')\leq 
k (k+1)$ and acting transitively on $M$ such that its 
Lie algebra ${\mathfrak g}'$ is a complementary ideal of 
$\mathfrak g ^{\mathcal D} : = \{X\in \mathfrak g : 
X \text { lies on }\mathcal D\}$.  Moreover, the  equality holds if and only if 
the universal covering group 
of $G'$ is $\Spin (k+1)$.
\end{thm}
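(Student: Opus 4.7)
The plan is to combine Lemma~\ref{premain} (which supplies the dimension bound and the equality characterisation for any complementary ideal) with Lemma~\ref{ISF} (which obstructs transitivity of $G'$), using the hypothesis on symmetric de~Rham factors to rule out the bad case. First I handle $k\geq 2$: the case $k=1$ of Lemma~\ref{premain} would give that $\mathcal{D}$ is parallel and that the Riemannian universal cover of $M$ splits off a line, which is impossible for $M$ compact and simply connected.

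For $k\geq 2$ I pick any complementary ideal $\mathfrak{g}'$ of $\mathfrak{g}^{\mathcal{D}}$ in $\mathfrak{g}$ and let $G'\trianglelefteq G$ be the associated normal subgroup. Lemma~\ref{premain} already provides $2\dim(G')\leq k(k+1)$ together with the $\Spin(k+1)$ equality clause, so what remains is transitivity of $G'$ on $M$ and semisimplicity of $\mathfrak{g}'$. I prove transitivity by contradiction: if $G'$ is not transitive, Lemma~\ref{ISF} produces a non-trivial irreducible symmetric factor $\hat{\mathfrak{g}}_{irr}\subset\hat{\mathfrak{g}}\subset\mathfrak{g}^{\mathcal{D}}$; since an automorphism of a semisimple Lie algebra permutes its simple ideals, $\hat{\mathfrak{g}}_{irr}$ is actually an ideal of the full $\mathfrak{g}$. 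The corresponding $\hat{G}_{irr}\trianglelefteq G$ is contained in $G^{\mathcal{D}}$, so its orbits lie inside the leaves $L(q)$ and, by construction, coincide with the de~Rham factors $\hat{L}(q)$ of those symmetric leaves; they are therefore totally geodesic irreducible symmetric submanifolds of $M$ and determine a $G$-invariant autoparallel distribution $\mathcal{E}\subset\mathcal{D}$. If this $\mathcal{E}$ is actually parallel in $M$, de~Rham's theorem (applicable because $M$ is simply connected) produces a symmetric de~Rham factor of $M$ whose tangent distribution is contained in $\mathcal{D}$, contradicting the standing hypothesis. Semisimplicity of $\mathfrak{g}'$ then comes automatically: any abelian ideal of $\mathfrak{g}$ not absorbed into $\mathfrak{g}^{\mathcal{D}}$ would yield a flat de~Rham factor of $M$, which is excluded by compactness and simple connectedness.

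The main obstacle is the upgrade from autoparallel to parallel for $\mathcal{E}$, i.e., the autoparallelism of $\mathcal{E}^\perp$. My plan is to exploit the strong symmetry of $\mathcal{D}$ together with the commutation $[\hat{\mathfrak{g}}_{irr},\mathfrak{g}_{\mathrm{rest}}]=0$ forced by the direct-sum decomposition $\mathfrak{g}=\hat{\mathfrak{g}}_{irr}\oplus\mathfrak{g}_{\mathrm{rest}}$. Given $v\in\mathcal{E}_p$ I would pick a Killing field $X\in\hat{\mathfrak{g}}_{irr}$ with $X_p=v$ whose restriction to $\hat{L}(p)$ is a transvection at $p$; then $(\nabla^{L(p)}X)_p=0$, and because $L(p)$ is totally geodesic in $M$ the antisymmetric endomorphism $(\nabla X)_p$ annihilates $\mathcal{D}_p$, and by antisymmetry it also kills the off-diagonal block $\mathcal{D}_p^\perp\to\mathcal{D}_p$. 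Only the $\mathcal{D}_p^\perp\to\mathcal{D}_p^\perp$ block survives; pairing it with Killing fields $Y\in\mathfrak{g}_{\mathrm{rest}}$ having $Y_p$ in the transverse direction, via the identity $\nabla_{Y_p}X=\nabla_{X_p}Y$ forced by $[X,Y]=0$, and using the freedom in $Y_p$, should force this block to vanish as well. This would yield $\nabla_ZX\in\mathcal{E}_p$ for every $Z\in T_pM$, giving the desired parallelism of $\mathcal{E}$ and closing the argument.
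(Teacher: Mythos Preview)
Your overall architecture matches the paper's: invoke Lemma~\ref{premain} for the bound and the $k\geq 2$ statement, assume $G'$ is not transitive, extract via Lemma~\ref{ISF} an irreducible symmetric factor $\mathfrak g_{irr}\subset\mathfrak g^{\mathcal D}$ (which is indeed an ideal of $\mathfrak g$, since $[\mathfrak g',\mathfrak g^{\mathcal D}]=0$), and then argue that the $G_{irr}$-orbit distribution $\mathcal E\subset\mathcal D$ is parallel, forcing the forbidden symmetric de~Rham factor.

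The gap is in your last step, the passage from ``$\mathcal E$ autoparallel'' to ``$\mathcal E$ parallel''. You correctly reduce to the $\mathcal D_p^\perp\to\mathcal D_p^\perp$ block of $(\nabla X)_p$, and the identity $\nabla_{Y_p}X=\nabla_{X_p}Y$ (from $[X,Y]=0$) is fine, but ``the freedom in $Y_p$'' does not by itself force this block to vanish. If you push the computation via the Koszul formula for Killing fields you get, for $Y,Y'\in\mathfrak g_{\mathrm{rest}}$,
\[
2\langle\nabla_{Y_p}X,\,Y'_p\rangle
=2\langle\nabla_{X_p}Y,\,Y'_p\rangle
=\langle[Y,Y']_p,\,X_p\rangle,
\]
and this vanishes only if $[Y,Y']_p\in T_p(\tilde G'\cdot p)$ is orthogonal to $\mathcal E_p$. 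That orthogonality $T_p(\tilde G'\cdot p)=\mathcal E_p^\perp$ is exactly the point that is not free, and it is where the paper brings in the one ingredient you omit: the connected isotropy $(K^p_{irr})^o$ of the irreducible symmetric pair $(G_{irr},K^p_{irr})$. This group acts irreducibly, hence without nonzero fixed vectors, on $\mathcal E_p=T_p(G_{irr}\cdot p)$, and it commutes with $\tilde G'$, so it fixes the orbit $\tilde G'\cdot p$ pointwise. Two things follow at once: first, $T_p(\tilde G'\cdot p)$ lies in the fixed set of $(K^p_{irr})^o$ on $T_pM$, hence is orthogonal to $\mathcal E_p$ and (by transitivity of $G$) equals $\mathcal E_p^\perp$; second, $\tilde G'\cdot p$ is a connected component of the fixed-point set of $(K^p_{irr})^o$ in $M$ and is therefore totally geodesic. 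Thus $\mathcal E^\perp$ is autoparallel, $\mathcal E$ is parallel, and the contradiction goes through. Your Killing-field route can be completed, but only after this isotropy observation; without it the argument stalls.

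A smaller point: your semisimplicity claim (``an abelian ideal not absorbed into $\mathfrak g^{\mathcal D}$ would yield a flat de~Rham factor of $M$'') is not justified as stated. What one actually needs is $\mathfrak z(\mathfrak g)\subset\mathfrak g^{\mathcal D}$, so that one may choose the complementary ideal $\mathfrak g'$ inside the semisimple part of $\mathfrak g$; this does not obviously follow from the absence of flat factors. Once transitivity of some complementary $G'$ is established, Remark~\ref{semisimple} lets you pass to its semisimple part, but you should then check that this semisimple part is still a complement of $\mathfrak g^{\mathcal D}$.
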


\begin {proof}
The fact that $k\geq 2$ follows from Lemma \ref {premain}, 
since $M$ is compact and simply connected. 
Let $G'$ be 
given as in Lemma \ref {premain} and
assume that $G'$ does not act transitively on $M$. 
Then, by Lemma \ref {ISF}, $\hat {\mathfrak g}$ has an 
irreducible symmetric factor  $\hat {\mathfrak g}_{irr}$
which is an ideal of  $ {\mathfrak g }^{\mathcal D}$.  
Observe that  $\mathfrak g _{irr}$  does not intersect  
$\bar {\mathfrak g} '^q$. 
Let $\tilde {\mathfrak g}$ be a complementary ideal of 
$\mathfrak g _{irr}$ in $\mathfrak g ^{\mathcal D}$. 
Let us consider the ideal 
$\tilde {\mathfrak g}'= \mathfrak g' \oplus 
\tilde {\mathfrak g}$ and its associated normal 
Lie subgroup 
$\tilde G '$ of $G$.
Then we  have the direct sum decomposition  
$\mathfrak g = \tilde {\mathfrak g}'\oplus \mathfrak g _{irr}$
of ${\mathfrak g}$ into two ideals.

Let $G_{irr}$ be the normal Lie subgroup of $G$ with Lie algebra 
$\mathfrak g _{irr}$. Then $\tilde G'$ commutes with $G_{irr}$ 
and $G = \tilde G ' \times G_{irr}$ (almost direct product). Every 
orbit  $G_{irr} \cdot q$ is a totally geodesic symmetric submanifold of 
$L(q)$ and of $M$. 
Let $K^q_{irr}$ be the isotropy group of $G_{irr}$ at $q$. Then 
$G_{irr}/K^q_{irr}$ is a symmetric presentation of $G_{irr} \cdot q$ and 
$(K^q_{irr})^o$ acts irreducibly, via the isotropy representation, 
 on $T_q(G_{irr}\cdot q)$. Since 
$(K^q_{irr})^o$ commutes with $\tilde G'$, 
it acts trivially on 
the orbit $\tilde G '\cdot q$. 
Then, since $G\cdot q = 
(\tilde G ' \times G_{irr})\cdot q = M$, 
we get
$$T_qM = T_q(\tilde G '\cdot q)  \oplus T_q(G_{irr}\cdot q)
\ \ \ \ \ \text {(orthogonal direct sum)}$$
and $\tilde G'\cdot q$ must coincide with  
the connected component of the fixed point set of $(K^q_{irr})^o$ containing
$q$. 
Then $\tilde G'\cdot q$ is a totally geodesic submanifold of $M$ and so
 the distribution $\tilde {\mathcal D}'$ on $M$, 
given by the tangent 
spaces of the $\tilde G '$-orbits, is  autoparallel. 
Moreover, this distribution  is orthogonal and complementary to the 
autoparallel distribution $\mathcal D_{irr}$, which is tangent to 
the $G_{irr}$-orbits. 
Then $\tilde {\mathcal D}'$ and $\mathcal D_{irr}$ are parallel distributions
and 
$\mathcal D_{irr}$ is contained in $ \mathcal D$. 
This contradicts the assumptions 
of this theorem and therefore $G'$ acts transitively on $M$. 
 The other statements follow from 
Lemma~\ref {premain}.
\end {proof}

\begin{rem}\label {cohomogeneity1} We recall here a 
well-known fact. 
Let $M$ be a complete and 
simply connected Riemannian manifold. 
Let $H$ be a connected Lie subgroup of $I (M)$ which admits 
a bi-invariant Riemannian metric. Assume that 
all $H$-orbits have   codimension one in $M$, that is, $H$ acts with
cohomogeneity 
one on $M$ and there are no singular orbits. 
Then $M$ splits as $M = N \times \mathbb R$
(generally not a Riemannian product). 
For the sake of 
completeness we will sketch the proof. 

 Let us change the 
Riemannian metric $(\ ,\ )$ 
on $M$  along the distribution $\mathcal T$ given by the tangent spaces of
 the $H$-orbits. The new metric at $q \in M$, restricted 
to $\mathcal T_q$, is the normal homogeneous metric on the orbit
$H\cdot q$ at $q$ (this is a local construction 
and it does not depend on 
whether  the orbit is exceptional or not). 
The group $H$ acts also by isometries on $M$ with this new Riemannian metric. 
If $\gamma (t)$ is a geodesic which is perpendicular at 
$\gamma (0)= p$ to the orbit $H\cdot p$, then it is always perpendicular 
to the $H$-orbits (since a Killing field projects constantly on any 
geodesic). So the 
distribution $\nu$ perpendicular to the 
$H$-orbits is an autoparallel distribution of rank one. Moreover, 
the one-parameter perpendicular variation of orbits $H\cdot \gamma (t)$ 
(we consider these orbits only locally around $\gamma (t)$) 
is by isometries. Then the $H$-orbits are totally geodesic 
and hence $\mathcal T = \nu ^\perp$ is also an autoparallel 
distribution.  It follows that $\nu$  is  a parallel 
distribution and then, by the de Rham decomposition theorem, $M$ splits 
off a line. 
\end{rem}

\begin{rem} \label {semisimple} 
Let $M$ be a compact and simply connected Riemannian 
homogeneous space. Let $G$ be a Lie subgroup of $I (M)$ 
which acts transitively on $M$.
Then the semisimple part $G_{ss}$ 
of $G$ acts also transitively 
on $M$. In fact, 
let $\mathfrak g = \mathfrak g_{ss} \times \mathbb R ^k$,
where $\mathfrak  g_{ss}$ is semisimple. We  always have 
 such a decomposition 
since $I (M)$ is compact and therefore $G$  admits a bi-invariant metric.
Let $0\leq d\leq k$ be the smallest integer such that 
 the Lie subgroup of $G$ with Lie algebra 
$\mathfrak g_{ss} \times \mathbb R ^d$ is transitive on $M$. 
If $d\geq 1$, let $\bar G$ be the Lie subgroup 
of $G$ with Lie algebra 
$\mathfrak g_{ss} \times \mathbb R ^{d-1}$. 
Then all orbits of $\bar G$ have codimension one in $M$. 
This is a contradiction since $M$ is compact and simply connected 
(see Remark \ref {cohomogeneity1}). Therefore we must have $d=0$. 
\end{rem}

We will need the  following result from \cite{OlmosReggiani1} for the proof of 
next lemma which we will use later. 

\begin{prop}[see Lemma 5.1 in \cite{OlmosReggiani1}]\label{normal}
  Let $M = G/H$ be a homogeneous Riemannian manifold (where $G$ is not
  necessarily connected), $p = [e]$ and $\Phi$ be a normal
  subgroup (eventually, finite) of the isotropy group $H$ at $p$. Let
  $\mathcal D^\Phi$ be the $G$-invariant distribution on $M$ such that
  $\mathcal D^\Phi_{g(p)} \subset T_{g(p)}M$ is the subspace
  of fixed vectors of $g \Phi g^{-1}$. Then $\mathcal D^\Phi$ is an
  autoparallel distribution.
\end{prop}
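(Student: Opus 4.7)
The plan is to realise the (local) integral manifolds of $\mathcal D^\Phi$ as connected components of fixed-point sets of $\Phi$ and its $G$-conjugates, which are totally geodesic. First I would check well-definedness: if $g(p)=g'(p)$ then $h:=g^{-1}g'\in H$, and the normality of $\Phi$ in $H$ gives $h\Phi h^{-1}=\Phi$, hence $g\Phi g^{-1}=g'\Phi(g')^{-1}$. The $G$-invariance is then immediate from the definition.

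The key structural step is to analyse the fixed-point set of $\Phi$. Since $H$ is compact (it is the isotropy of an isometric action) and $\Phi$ is closed in $H$, $\Phi$ is a compact group of isometries of $M$, so $F:=\operatorname{Fix}(\Phi)$ is a closed totally geodesic submanifold of $M$ whose tangent space at each of its points is the $\Phi$-fixed subspace of the ambient tangent space. Let $F_p$ be the connected component of $F$ through $p$, and let $Z^0$ be the identity component of the centraliser $Z_G(\Phi)$. Then $Z^0\cdot p\subset F_p$: for $z\in Z^0$ and $\phi\in\Phi$, $\phi(z(p))=z\phi(p)=z(p)$. The compactness of $\Phi$ provides an $\Ad(\Phi)$-reductive decomposition of $\mathfrak g$ and $\mathfrak h$, from which $(\mathfrak g/\mathfrak h)^\Phi=\mathfrak z_{\mathfrak g}(\Phi)/\mathfrak z_{\mathfrak h}(\Phi)$; equivalently, $T_p(Z^0\cdot p)=\mathcal D^\Phi_p=T_pF_p$. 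Since $Z^0\cdot p$ is a connected immersed submanifold of $F_p$ of the same dimension, $Z^0\cdot p=F_p$ in a neighbourhood of $p$.

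With this in hand the autoparallelism follows. Let $q=g(p)$ and $v\in\mathcal D^\Phi_q$, and set $\gamma(t):=\exp_q(tv)$. Since $v$ is fixed by each $d(g\phi g^{-1})_q$ and every $g\phi g^{-1}$ is an isometry fixing $q$, the whole geodesic $\gamma$ is fixed pointwise by $g\Phi g^{-1}$, so $\gamma\subset g(F)=\operatorname{Fix}(g\Phi g^{-1})$. For $t$ small, $g^{-1}\gamma(t)$ lies in $F_p=Z^0\cdot p$ near $p$, so we can write $\gamma(t)=(gz_t)(p)$ with $z_t\in Z^0$; since $z_t$ centralises $\Phi$, $(gz_t)\Phi(gz_t)^{-1}=g\Phi g^{-1}$, and hence $\mathcal D^\Phi_{\gamma(t)}$ is the subspace of vectors fixed by $g\Phi g^{-1}$ at $\gamma(t)$, namely $T_{\gamma(t)}(g(F))$, which contains $\gamma'(t)$. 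Restarting the argument at each $\gamma(t_0)$ with $\gamma'(t_0)\in\mathcal D^\Phi_{\gamma(t_0)}$, a standard open-closed argument gives $\gamma'(t)\in\mathcal D^\Phi_{\gamma(t)}$ for all $t$, proving autoparallelism.

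The principal subtlety I expect is reconciling the ``pointwise'' definition of $\mathcal D^\Phi$—in which the conjugate $g'\Phi(g')^{-1}$ used at $g'(p)$ a priori depends on the coset representative $g'$—with the natural geometric candidate for an integral leaf, namely the fixed-point set of the \emph{original} $\Phi$. The identification $Z^0\cdot p=F_p$ locally is what breaks the deadlock: it allows one to parametrise a neighbourhood of $p$ in $F_p$ by elements of $Z^0$, keeping the conjugate constant along the geodesic and thereby aligning the group-theoretic description of $\mathcal D^\Phi$ with the totally geodesic submanifolds $g(F)$.
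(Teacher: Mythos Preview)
The paper does not prove this proposition; it is quoted from \cite{OlmosReggiani1} and used as a black box. So there is no ``paper's own proof'' to compare with, and your argument stands on its own.

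Your structural steps are sound: well-definedness via normality of $\Phi$ in $H$, the identification $T_p(Z^0\cdot p)=\mathfrak m^\Phi=T_pF_p$ from an $\Ad(\Phi)$-invariant reductive splitting, and the local equality $Z^0\cdot p=F_p$. These are the right ingredients.

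The gap is in the very last inference. You conclude ``$\gamma'(t)\in\mathcal D^\Phi_{\gamma(t)}$ for all $t$, proving autoparallelism.'' But the property \emph{every geodesic tangent to $\mathcal D$ at one point remains tangent to $\mathcal D$} does \emph{not} imply that $\mathcal D$ is autoparallel. A clean counterexample: on a compact Lie group $G$ with bi-invariant metric, take any left-invariant distribution $\mathcal D$ with $[\mathcal D_e,\mathcal D_e]\not\subset\mathcal D_e$ (e.g.\ a generic $2$-plane in $\mathfrak{su}(2)$). Geodesics are $t\mapsto g\exp(tX)$ with tangent $dL_{\gamma(t)}(X)$, so they stay tangent to $\mathcal D$ whenever $X\in\mathcal D_e$; yet $\nabla_{X}Y=\tfrac12[X,Y]\notin\mathcal D$, so $\mathcal D$ is neither integrable nor autoparallel. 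In general, the geodesic condition only forces the $\mathcal D^\perp$-valued form $\alpha(X,Y)=(\nabla_XY)^\perp$ to be skew, not zero.

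The fix is already implicit in what you wrote. You actually proved the stronger statement that for every $q'\in g(F_p)$ near $q$ one can write $q'=(gz)(p)$ with $z\in Z^0$, hence $\mathcal D^\Phi_{q'}=T_{q'}(g(F_p))$. This says $g(F_p)$ is a local integral manifold of $\mathcal D^\Phi$, and since $g(F_p)$ is totally geodesic, $\mathcal D^\Phi$ is integrable with totally geodesic leaves, i.e.\ autoparallel. Rephrase your final paragraph in these terms and drop the geodesic open--closed argument, which is both unnecessary and, as stated, insufficient.
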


\begin{lem}\label{A}
  Let $M = G/H$ be a compact homogeneous Riemannian manifold and 
  $\mathcal D^1$ be an autoparallel $G$-invariant distribution on $M$
  which is strongly symmetric with respect to $G$. Let $\mathcal D^2$
  be an autoparallel $G$-invariant distribution on $M$ such that
  $\mathcal D^1 \subset  \mathcal D^2$ and ${\rm rk}(\mathcal D^2) =
  {\rm rk}(\mathcal D^1) +1 $. Then $\mathcal D^2$ is strongly symmetric
  with respect to $G$.
\end{lem}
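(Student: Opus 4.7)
The plan is to verify the second, equivalent, form of the definition of strongly symmetric: for every $p\in M$ and every $v\in {\mathcal D}^2_p$ I will produce a Killing field $X\in{\mathfrak g}$ with $X_p = v$ whose restriction to $L^2(p)$ is parallel at $p$. Fix $p$ and put $L:=L^2(p)$. Because ${\mathcal D}^2$ is $G$-invariant and $G$ is transitive on $M$, the stabilizer $G_L = \{g\in G : g(L)=L\}$ acts transitively on $L$; let $\tilde G$ denote its image in the isometry group of $L$, which is compact because $G$ is. Since $L$ is totally geodesic, ${\mathcal D}^1|_L$ is an autoparallel $\tilde G$-invariant distribution of corank one on $L$. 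I would first check that ${\mathcal D}^1|_L$ is strongly symmetric with respect to $\tilde G$: given $q\in L$ and $v\in{\mathcal D}^1_q$, the first form of the definition applied to ${\mathcal D}^1$ furnishes a transvection Killing field of $L^1(q)$ coming from the identity component of the stabilizer of $L^1(q)$ in $G$; but any $g\in G$ with $g(L^1(q)) = L^1(q) \subset L$ automatically satisfies $g(L) = L(g(q)) = L$, so $g\in G_L$ and this Killing field descends to $\tilde{\mathfrak g}$.

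Next I would transplant the $k=1$ case of Lemma \ref{premain} to the leaf $L$. Its argument shows that the ideal $\tilde{\mathfrak g}^{{\mathcal D}^1|_L}\subset\tilde{\mathfrak g}$ of Killing fields tangent to ${\mathcal D}^1|_L$ has codimension one, so $\tilde G^{{\mathcal D}^1|_L}$ acts on $L$ with cohomogeneity one and without singular orbits, its orbits being the leaves $L^1(q)\subset L$. Applying Remark \ref{cohomogeneity1} (using compactness of $\tilde G^{{\mathcal D}^1|_L}$ and completeness of $L$ as a totally geodesic submanifold of the compact $M$), the orthogonal distribution ${\mathcal E}|_L := ({\mathcal D}^1|_L)^\perp$ is autoparallel; both ${\mathcal D}^1|_L$ and ${\mathcal E}|_L$ are therefore parallel, and de Rham's theorem yields a Riemannian splitting $\tilde L = \tilde L^1\times\mathbb R$ of the universal cover of $L$, with $\tilde L^1$ the universal cover of $L^1(p)$ (globally symmetric by hypothesis). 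Using an $\Ad$-invariant inner product on the compact $\tilde{\mathfrak g}$, I would also decompose $\tilde{\mathfrak g} = \tilde{\mathfrak g}^{{\mathcal D}^1|_L}\oplus\tilde{\mathfrak a}$ as a direct sum of commuting ideals, with $\tilde{\mathfrak a}$ one-dimensional.

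To finish, I would construct the Killing fields required by the second definition. Because the one-dimensional normal subgroup $\tilde A\subset\tilde G$ commutes with the $L^1$-transitive action of $\tilde G^{{\mathcal D}^1|_L}$, a standard centralizer argument in the product structure shows that $\tilde A$ acts trivially on the $\tilde L^1$-factor and as translations on the $\mathbb R$-factor. Its generator $\tilde Z$ is therefore (a constant multiple of) the parallel coordinate vector field on the $\mathbb R$-factor, so $\tilde Z$ is parallel on $L$ and $\tilde Z_p$ spans ${\mathcal E}_p$. Similarly, elements of $\tilde G^{{\mathcal D}^1|_L}$ act as $(\phi,\mathrm{id}_{\mathbb R})$ in the product structure, so the transvection Killing fields of $L^1(p)$ (which by the first step lie in $\tilde{\mathfrak g}^{{\mathcal D}^1|_L}$) extend as $(\tilde X_1,0)$ and are parallel at $p$ on all of $L$, not merely on $L^1(p)$. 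Lifting each such Killing field through the Lie algebra surjection ${\mathfrak g}_L\to\tilde{\mathfrak g}$ produces, for any $v=v_1+e\in {\mathcal D}^1_p\oplus{\mathcal E}_p={\mathcal D}^2_p$, a Killing field $X\in{\mathfrak g}$ with $X_p=v$ and $X|_L$ parallel at $p$. The hardest step is this transplantation of the $k=1$ argument to $L$, since $L$ need not be closed (hence not compact) in $M$; this is circumvented by noting that what the argument really uses is the bi-invariant metric on $\tilde G^{{\mathcal D}^1|_L}$ (which survives because $\tilde G$ is compact) together with the completeness of $L$ (which is automatic).
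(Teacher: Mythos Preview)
Your strategy---reduce to the $k=1$ case of Lemma~\ref{premain} on the leaf $L=L^2(p)$ and read off parallel Killing fields from the de Rham splitting $\tilde L=\tilde L^1\times\mathbb R$---is genuinely different from the paper's. The paper never invokes Lemma~\ref{premain} or a product decomposition; it works directly with a single $\xi\in\mathfrak g$ whose restriction to $L^1(q)$ is everywhere perpendicular to $L^1(q)$ inside $L^2(q)$, proves via an $\Ad$-invariance argument (using the bi-invariant metric on $\mathfrak g$, restricted to the orthogonal complement of $\{X:X_{|L^2(q)}=0\}$) that $\xi$ commutes with the stabilizer algebra $\mathfrak g^1$ of $L^1(q)$, and then checks by hand from $[\xi,X]=0$ and the Killing equation that both $\xi$ and each $\mathcal D^1$-transvection $X$ have vanishing covariant derivative at $q$ inside $L^2(q)$. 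Your route is more structural and reuses existing machinery; the paper's is more elementary and avoids universal covers.

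Your outline is essentially correct, but two steps are glossed over and both require the same missing ingredient, namely \emph{boundedness} of the Killing fields involved. First, the assertion that $\tilde A$ acts trivially on the $\tilde L^1$-factor is false when $\tilde L^1$ has a Euclidean de Rham factor: the lift of $\tilde Z$ can have a nonzero translation component there, so $\tilde Z_p$ need not span $\mathcal E_p$. What saves you is that $\tilde Z$ is bounded (it comes from $\mathfrak g$ on the compact $M$, and the covering map is a local isometry), hence on $\tilde L=\mathbb R^{k+1}\times N$ (with $N$ semisimple) its flat part is a pure translation and its $N$-part centralizes the transvection algebra of $N$, forcing the latter to vanish; so $\tilde Z$ is still parallel with nonzero $\mathcal E_p$-component, which is all you need. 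Second, you claim the transvection fields of $L^1(p)$ ``by the first step lie in $\tilde{\mathfrak g}^{\mathcal D^1|_L}$'', but your first step only placed them in $\tilde{\mathfrak g}$; a $\tilde Y\in\tilde{\mathfrak g}$ tangent to $L^1(p)$ along $L^1(p)$ is not a priori tangent to $\mathcal D^1|_L$ on all of $L$. Again boundedness fills the gap: on $\tilde L$ the $\mathbb R$-component of the lift of $\tilde Y$ is a constant, and it vanishes along $\tilde L^1\times\{0\}$, hence everywhere, so indeed $\tilde Y\in\tilde{\mathfrak g}^{\mathcal D^1|_L}$ and it lifts to $(\tilde Y_1,0)$ as you wanted.
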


\begin {proof}
  Let $q \in M$ and $L^i(q)$ be the maximal integral manifold of
  $\mathcal D^i$ containing $q$, $i = 1, 2$. Let $v \in T_q(L^1(q))$ be
  arbitrary. Then, since $\mathcal D^1$ is strongly symmetric, there
  exists $X \in \mathfrak g$, regarded as a Killing field, such that
  $X. q = v$ and $\langle\nabla_wX, z\rangle = 0$ for all $w, z
  \in T_q(L^1(q))$. Since $\mathcal D^i$ is $G$-invariant,
  $X_{\vert L^i(q)}$ must always be tangent to $L^i(q)$, $i = 1, 2$. 
  
  Let $\xi \in \mathfrak g$ be such that $0 \neq \xi.q \in
  \mathcal D^2_q$ and $\xi.q$ is orthogonal to $\mathcal
  D^1_q$. Since the projection of $\xi_{\vert L^1(q)}$ to the tangent
  space of $L^1(q)$ is a bounded Killing field, it lies in the Lie
  algebra of the transvection group of $L^1(q)$. Since $\mathcal D^1$
  is strongly symmetric, there exists $Y \in \mathfrak g$ such that
  $Y_{\vert L^1(q)}$ is always tangent to $L^1(q)$ and coincides with
  the projection of $\xi_{\vert L^1(q)}$ to the tangent spaces of
  $L^1(q)$. So, by replacing $\xi $ by $\xi - Y$, we may assume that
  $\xi_{\vert L^1(q)}$ is always perpendicular to $L^1(q)$. Note that 
  $\xi_{\vert L^2(q)}$ must always be tangent to $L^2(q)$. 
  
  If $\eta \in \mathfrak g$ is tangent to $L^2(q)$ and perpendicular
  to $L^1(q)$, then $\eta$ must be a scalar multiple of $\xi$. In
  fact, let $\lambda \in \mathbb R$ such that $\lambda(\xi.q) =
  \eta.q$. Then $\psi = \eta - \lambda\xi$ vanishes at $q$ and so
  $\psi.q \in T_q(L^1(q))$. Since $L^1(q)$ is $G$-invariant,
  $\psi$ must always be tangent to $L^1(q)$. However, $\psi $ is always
  perpendicular to $L^1(q)$, and therefore $\psi$ is identically zero on
  $L^1(q)$. Since the totally geodesic submanifold $L^1(q)$ of
  $L^2(q)$ has codimension one, we get $\eta _{\vert L^2(q)} = 0$. We
  may have chosen, by making use of a bi-invariant metric on
  $\mathfrak g$, $\xi \in (\mathfrak g_0)^\perp$, where $\mathfrak g_0
  = \{X \in \mathfrak g: X_{\vert L^2(q)} = 0\}$. Let $G^1$ be the connected
  component of the subgroup of $G$ that leaves $L^1(q)$ invariant. If
  $g \in G^1$, then $g_*\xi = \Ad(g)\xi \in (\mathfrak g_0)^\perp$ is
  tangent to $L^2(q)$ and perpendicular to $L^1(q)$. Then $\Ad(g)\xi$
  is a scalar multiple of $\xi$. Since $\Ad(g): (\mathfrak g_0)^\perp
  \to (\mathfrak g_0)^\perp$ is an isometry and $G^1$ is connected,
  we get $\Ad(g)\xi = \xi$ and so $\xi$ commutes with $\mathfrak g^1$.

  Now observe that for all $z \in T_q(L^1(q))$ we have
  $\langle\nabla_{\xi.q}X, z\rangle = -\langle\nabla_zX, \xi.q\rangle = 0$, 
  since $X$ is tangent to the totally geodesic submanifold $L^1(q)$ of
  $M$. As
  $\langle\nabla_{\xi \cdot q}X, \xi \cdot q\rangle = 0$,
  we conclude that $X_{\vert L^ 2(q)}$ is a transvection at $q$. 
  
  Let us now prove that $\xi_{\vert L^2(q)}$ is also a
  transvection at $q$. Let $X$ be as above. Since $[X, \xi] = 0$ we
  obtain $\nabla_{X. q}\xi = \nabla_{\xi.q}X =
  0$. Observe also that $\langle\nabla_{\xi .q}\xi, v\rangle =
  -\langle\nabla_v\xi, \xi.q\rangle = 0$, where $v \in
  T_q(L^1(q))$ is arbitrary. Since $\langle\nabla_{\xi.q}\xi,
  \xi.q\rangle = 0$, we conclude that $\xi_{\vert L^1(q)}$ is a
  transvection at $q$. It follows that $\mathcal D^2 $ is strongly symmetric.
  \end {proof}
  
  The proof was rather involved, since we had to use that $\mathfrak
  g$ admits a bi-invariant metric. Otherwise, if we consider for example the
  hyperbolic plane $H^2$ as a solvable Lie group
  $S$, the distribution tangent to the lines that meet at infinity is
  $S$-strongly symmetric, but the distribution $TH^2$ is not
  $S$-strongly symmetric.

\section {The index of symmetry}

In this section we present the definition and some basic facts about the index
of symmetry, for details we refer to \cite {OlmosReggianiTamaru}.
Let $(M, \langle \cdot , \cdot \rangle)$ 
be an $n$-dimensional Riemannian manifold with Riemannian metric  
$\langle \cdot , \cdot \rangle$. 
We denote  by $\mathfrak{K} (M)$ the Lie algebra
of global Killing fields on $M$.
The {\it Cartan subspace} $\mathfrak p ^q$ at $q\in M$ is 
 $$\mathfrak p ^q : = \{ X \in  \mathfrak {K} (M): \,
(\nabla X)_q = 0\},$$
where $\nabla$ is the Levi-Civita connection of $M$.
The elements of $\mathfrak p ^q$ are called {\it 
transvections at } $q$.
The {\it symmetric isotropy subalgebra} at $q$ is 
$$\mathfrak k ^q : = \text {linear span of  } \{ [X,Y]: \, X,Y \in
\mathfrak p ^q\}.$$
For $X,Y \in \mathfrak p ^q$ we have $[X,Y]_q =
(\nabla _X Y)_q - (\nabla _Y X)_q = 0$.
Thus  $\mathfrak k ^q$
 is contained in the full isotropy algebra
$\mathfrak {K}_q (M) = \{X \in {\mathfrak K}(M) : X_q = 0\}$. 
Moreover, since $\mathfrak p ^q $ is invariant under the action of the isotropy algebra at $q$,
$$ \mathfrak g ^q : = \mathfrak k ^q \oplus \mathfrak p ^q$$
is an involutive Lie algebra.
Let $G^q$ and $K^q$ be the Lie subgroup of 
$I(M)$ with Lie algebra
 $\mathfrak g ^q$ and $\mathfrak k ^q$, respectively.

 The {\it symmetric subspace} $\mathfrak s _q$ of $T_qM$ at $q \in M$ is defined
by
$$ \mathfrak s _q: =\{X_q : X \in \mathfrak p ^q\}  . $$
The {\it index of symmetry} $i_{\mathfrak s}(M)$ of $M$ 
  is the infimum of $\{\dim (\mathfrak s _q) : q \in M\}$.
Note that $\dim (\mathfrak s _q) = 
\dim (\mathfrak p ^q) = \dim (L(q))$, 
where $L(q):= G^q \cdot q$ is the so-called {\it leaf 
of symmetry} containing $q$. 
The {\it coindex of symmetry} $ci_{\mathfrak s}(M)$ of $M$ is  defined by
$ci_{\mathfrak s}(M) = n-i_{\mathfrak s}(M)$.

\begin {facts} [see \cite {OlmosReggianiTamaru}, Section 3] \label {facts} 
Let $q \in M$.
\begin{enumerate}[\upshape (a)]
\item $G^{h(q)} = hG^qh^{-1}$
and $d_qh(\mathfrak s_q) = \mathfrak s_{h(q)}$ 
 for all $h\in I(M)$.
\item $L(q)$ 
is a totally geodesic submanifold 
of $M$ and a globally symmetric space. 
\item $G^q$ is a normal subgroup of 
$\{g\in \text {\rm I} (M): g(L(q)) = L(q)\}$ and 
$K^q$ is a normal subgroup of the full isotropy group
$I(M)_q$.
\item If $X\in \mathfrak p ^q$, then $\gamma (t) = 
\Exp(tX)(q)$ is a geodesic in $M$. Moreover, the parallel transport
along $\gamma$ from $q = \gamma(0)$ to $\gamma(t)$ is given by $d_q\Exp(tX)$.
\item For every $I ^o (M)$-invariant 
tensor field $T$ on $M$ we have $\nabla _{X_q}T = 0$ for all $X\in \mathfrak
p^q$. 
In particular, $\nabla _{X_q}R = 0$, where $R$ is the Riemannian curvature
tensor of $M$. 
\item If $X\in \mathfrak p ^q$ and $Z$ is 
any vector field on $M$, then 
$\nabla _{X_q}Z = [X,Z]_q$.
\item If $M$ is compact, then $G^q$ acts almost effectively on $L(q)$. 
\end{enumerate}
\end {facts}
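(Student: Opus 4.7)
The plan is to deduce (a)--(g) systematically from two basic inputs: the naturality of Killing fields under isometries, and the fact that an element $X\in\mathfrak p^q$ satisfies $\nabla_Y X|_q=0$ for every $Y\in T_qM$. I would process them in the order (a), (f), (d), (e), (b), (c), (g), since each is prepared by the earlier ones.

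Item (a) reduces to the observation that for $h\in I(M)$ the push-forward $h_*X$ is again a Killing field and $(\nabla(h_*X))_{h(q)}=dh_q\circ(\nabla X)_q\circ dh_q^{-1}$, so $h_*$ carries $\mathfrak p^q$ bijectively onto $\mathfrak p^{h(q)}$ and, via the bracket formula, $\mathfrak g^q$ onto $\mathfrak g^{h(q)}$; evaluating at $q$ gives $d_qh(\mathfrak s_q)=\mathfrak s_{h(q)}$, and exponentiating yields the group statement. Item (f) is the torsion-free identity $[X,Z]_q=(\nabla_{X_q}Z)-(\nabla_{Z_q}X)$ together with $(\nabla X)_q=0$. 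For (d), I would verify that $\gamma(t)=\Exp(tX)(q)$ has $\nabla_{\dot\gamma}\dot\gamma|_{t=0}=(\nabla_X X)_q=0$; applying (a) to the isometry $\Exp(tX)$ transports the vanishing of $\nabla X$ from $q$ to $\gamma(t)$, so $\ddot\gamma\equiv 0$ and $\gamma$ is a geodesic. The differential $d_q\Exp(tX)$ is an isometry sending $X_q=\dot\gamma(0)$ to $X_{\gamma(t)}=\dot\gamma(t)$, and by uniqueness of parallel transport along a geodesic it coincides with that parallel transport.

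For (e), an $I^o(M)$-invariant tensor $T$ is preserved by $\Exp(tX)$, whose differential along $\gamma$ is parallel transport by (d); differentiating $T$ along $\gamma$ at $t=0$ gives $\nabla_{X_q}T=0$. Item (b) then uses that $\mathfrak g^q=\mathfrak k^q\oplus\mathfrak p^q$ is an involutive Lie algebra: the orbit $L(q)=G^q\cdot q$ has tangent space $\mathfrak s_q$ at $q$, and by (d) every geodesic tangent to $\mathfrak s_q$ stays in $L(q)$, so $L(q)$ is totally geodesic. The involution $\theta$ of $\mathfrak g^q$ that fixes $\mathfrak k^q$ and negates $\mathfrak p^q$ lifts to an automorphism of a covering of $G^q$ and, because it preserves $K^q$, descends to an involution of $L(q)=G^q/K^q$ whose fixed point set at $q$ is $\{q\}$; this is the geodesic symmetry, so $L(q)$ is globally symmetric.

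For (c), if $g\in I(M)$ satisfies $g(L(q))=L(q)$, apply (a) at $g(q)\in L(q)$ to get $gG^qg^{-1}=G^{g(q)}$, and note that (a) applied at a point of $L(q)$ together with the transitivity of $G^q$ on $L(q)$ forces $G^{g(q)}=G^q$; for $h\in I(M)_q$ the same push-forward argument on $\mathfrak k^q$ gives $hK^qh^{-1}=K^q$. Finally, for (g), compactness of $M$ makes every Killing field bounded, so the closed subgroup of $G^q$ acting trivially on $L(q)$ has Lie algebra $\{X\in\mathfrak g^q:X|_{L(q)}\equiv 0\}$; such an $X$ generates an isometry fixing $L(q)$ pointwise, and since $L(q)$ is totally geodesic and contains $q$, this kernel is discrete, giving almost-effectivity. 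The main obstacle I anticipate is inside (b): turning the infinitesimal involution $\theta$ into an honest isometry of $L(q)$ requires care with the global structure of $G^q$ (which cover one needs and how $\theta$ descends through $K^q$); every other item is essentially mechanical once (a) and (d) are in place.
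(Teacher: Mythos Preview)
The paper itself does not prove these facts; it simply records them with a pointer to \cite{OlmosReggianiTamaru}, Section~3. So there is no ``paper's own proof'' to compare against, and your outline has to be judged on its own merits. Most of it is sound: (a), (f), (e), and (c) work exactly as you sketch, and the totally-geodesic part of (b) is fine once you note (as you implicitly do) that for $g\in G^q$ one has $G^{g(q)}=gG^qg^{-1}=G^q$, so the leaf through any point of $L(q)$ is $L(q)$ itself.

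Two places need repair. In (d), the last sentence is not a valid argument: there are many linear isometries $T_qM\to T_{\gamma(t)}M$ sending $\dot\gamma(0)$ to $\dot\gamma(t)$, so ``uniqueness of parallel transport'' does not single out $d_q\Exp(tX)$. The correct conclusion comes from what you already established, namely $(\nabla X)_{\gamma(t)}=0$ for all $t$: for $v\in T_qM$ set $V(t)=d\phi_t(v)$ with $\phi_t=\Exp(tX)$; differentiating $V(t_0+s)=d\phi_s(V(t_0))$ at $s=0$ and using torsion-freeness gives $\tfrac{D}{dt}V|_{t_0}=(\nabla_{V(t_0)}X)_{\gamma(t_0)}=0$. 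Equivalently, apply (f) at every $\gamma(t)$ to a $\phi_t$-invariant extension of $v$.

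In (g) your argument has a genuine gap: ``$L(q)$ is totally geodesic'' does not force a Killing field vanishing on $L(q)$ to be zero (think of a rotation of $\mathbb R^3$ about an axis). What makes (g) work is the special structure $\mathfrak k^q=\operatorname{span}\{[X,Y]:X,Y\in\mathfrak p^q\}$ together with compactness. If $Z\in\mathfrak g^q$ vanishes on $L(q)$ then $Z_q=0$, so $Z\in\mathfrak k^q$; moreover the ideal $\mathfrak n=\{Z\in\mathfrak g^q:Z|_{L(q)}=0\}$ satisfies $[\mathfrak n,\mathfrak p^q]\subset\mathfrak n\cap\mathfrak p^q=0$. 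Now use that $M$ compact makes $I(M)$ compact, so $\mathfrak g^q$ carries an $\mathrm{ad}$-invariant inner product; then for $Z\in\mathfrak n$ and $X,Y\in\mathfrak p^q$ one has $\langle Z,[X,Y]\rangle=\langle[Z,X],Y\rangle=0$, hence $Z\perp\mathfrak k^q$, forcing $Z=0$. This is where compactness actually enters.
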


In this paper we will only deal with compact homogeneous Riemannian manifolds
$M=G/H$. 
In this case $q\mapsto \mathfrak s_q$ is a $G$-invariant, and hence smooth,
distribution which is called the 
{\it distribution
of symmetry} of $M$. The distribution $\mathfrak s$ on $M$ is autoparallel 
and the leaves of symmetry $L(q)$ are the maximal integral manifolds of
${\mathfrak s}$. 
Note that the distribution of symmetry is a strongly 
symmetric distribution with respect to $I^o(M)$.
Let $\mathcal K (M)^{\mathfrak s}$ be the ideal of 
$\mathcal K (M)$ which consists of those Killing fields 
that are tangent to $\mathfrak s$. 

\begin{rem} 
$G^q$ is a Lie subgroup of $ I(M)$ but it is not necessarily contained in
the presentation group $G$ of $M$. 
In the notation of Section 3, if $\mathcal D = \mathfrak s$
and $G= I^o (M)$, then $\bar G ^q = G^q_{\vert L(q)}$. 
\end{rem}

\section {Structure results for spaces with non-trivial 
index of symmetry}

In this section we develop some general structure theory in relation to the index and co-index of symmetry.
 These results are useful for understanding the 
geometry of (irreducible) compact homogeneous 
spaces with a non-trivial index of symmetry.
Our main theorem is crucial for classifying 
compact homogeneous 
spaces $M^n$ with low co-index of symmetry $k = ci_{\mathfrak s}(M)$, since it gives a bound 
on the dimension of a transitive group, and hence on $n$, in terms of 
$k$. 

\begin{rem}\label {Jacobi} (The Jacobi operator in directions
of the distribution of symmetry). If $X\in \mathfrak p ^q$ then, 
from (d) and (e) of Facts \ref {facts}, 
$\nabla_{\gamma ' (t)}R = 0$, where $\gamma (t) = 
\text {Exp} (tX)(q)$ is the  geodesic with initial 
condition $\gamma ' (0) = X_q$. 
 Let  $e_1 = X_q ,e_2, \ldots , e_{n}$ be an orthonormal basis 
 of $T_qM$ which 
diagonalizes the Jacobi operator $R_{\cdot, X_q}X_q$ at $q$
 with corresponding eigenvalues $a_1 = 0, a_2,\ldots,  a_n$.    
Then $e_1(t),\ldots, e_n(t)$ diagonalizes 
$R_{\cdot , \gamma ' (t)}\gamma ' (t)$ with the same 
corresponding eigenvalues, where $e_i(t)$ denotes 
the parallel transport of $e_i$ along $\gamma (t)$.
For $\kappa \in {\mathbb R}$ we define
\[ 
\sin_\kappa(t) = 
\begin{cases}
\frac{1}{\sqrt{\kappa}}\sin(\sqrt{\kappa}t) & ,\ {\rm if}\ \kappa > 0, \\
t & ,\ {\rm if}\ \kappa = 0,\\
\frac{1}{\sqrt{-\kappa}}\sinh(\sqrt{-\kappa}t) & ,\ {\rm if}\ \kappa < 0,
\end{cases}
\]
and
\[ 
\cos_\kappa(t) = 
\begin{cases}
\cos(\sqrt{\kappa}t) & ,\ {\rm if}\ \kappa > 0, \\
1 & ,\ {\rm if}\ \kappa = 0,\\
\cosh(\sqrt{-\kappa}t) & ,\ {\rm if}\ \kappa < 0.
\end{cases}
\]
Let $v= v_1 e_1 + \ldots + v_n e_n$ and 
$w = w_1 e_1 + \ldots + w_n e_n$. Then the Jacobi field 
$J(t)$ along $\gamma (t)$ with initial conditions 
$J(0) = v$ and $J'(0)= w$ is given by 
\[
J(t) = \sum_{i=1}^n v_i\cos_{a_i}(t)e_i(t) + \sum_{i=1}^n
w_i\sin_{a_i}(t)e_i(t).
\]
Let now $Y \in \mathfrak {K} (M)$ be a Killing field 
with $Y_q = e_i$. 
Then $J_Y(t) = Y _{\gamma (t)}$ is a Jacobi 
field along $\gamma (t)$ with $J_Y(0) = e_i$. 
Since $M$ is compact, $Y(t)$ is bounded and thus also $J_Y(t)$ is 
bounded for $t\in \mathbb R$. 
From the above description of the Jacobi fields along $\gamma$ it follows that
$a_i \geq 0$ for all $i = 1,\ldots,n$.
Therefore the Jacobi operator $R_{\cdot , X_q}X_q$ is positive 
semidefinite. 
\end{rem}

\begin{prop}\label {structure}
Let $M$ be a homogeneous compact Riemannian manifold with 
a non-trivial index of symmetry.
Let  $I^{q}(M)$  be the Lie subgroup of  
$I (M)$ that
leaves invariant the leaf of symmetry $L(q)$. 
We identify $\mathfrak K (M)$ with the Lie algebra of $I(M)$ and define
 $$\mathfrak m ^q = \{\xi \in \mathfrak K (M): 
\xi _{\vert L(q)} \text { is always perpendicular to } L(q)\}.$$ 
Then the following statements hold:
\begin{enumerate}[(i)]
\item $\mathfrak m^q$ is an 
$\Ad (I^{q}(M))$-invariant
subspace of $\mathfrak K (M)$.
\item The linear map  $ \mathrm {Ev}^q:
\mathfrak m^q \to (T_qL(q))^\perp,\ \xi \mapsto \xi_q$ is surjective and
$$\operatorname  {ker}(\mathrm {Ev}^q) 
= \{
\xi \in \mathfrak K (M): \xi _{\vert L(q)}= 0\}.$$
\item Let $0\neq X\in \mathfrak p ^q$ 
be a transvection at $q$ and let $\gamma (t) = 
\Exp(tX)(q)$. 
Decompose $T_{\gamma (t)} M = E_0(t)\oplus \ldots \oplus E_r(t)$ 
 ($E_0$ may be trivial)
into the eigenspaces associated to 
the different (constant) eigenvalues $0 = \lambda _0< \ldots
<\lambda _r$ of the Jacobi operator 
$R_{\cdot,\gamma ' (t)}\gamma ' (t)$. 
Let $\xi \in \mathfrak K (M)$ and let $(\xi _{\gamma (t)} )^i$ 
be the orthogonal projection 
of $\xi _{\gamma (t)}$ onto $E_i(t)$. Then there exists $\eta \in \mathfrak K
(M)$
such that $\eta _{\gamma (t)} = (\xi _{\gamma (t)} )^i$.
\end{enumerate}
\end{prop}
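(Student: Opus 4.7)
Part (i) is a direct verification: $\mathfrak m^q$ is visibly a linear subspace, and for $g\in I^q(M)$ and $\xi\in\mathfrak m^q$, the field $g_*\xi$ remains perpendicular to $L(q)$ along $L(q)$ because $g$ restricts to an isometry of $L(q)$, so $dg$ preserves the splitting $TM|_{L(q)} = TL(q)\oplus \nu L(q)$.

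For part (ii), I would first establish the kernel description. The inclusion $\supset$ is trivial. For the converse, if $\xi\in\mathfrak m^q$ with $\xi_q = 0$ then the flow $\operatorname{Exp}(t\xi)$ fixes $q$; since the distribution of symmetry is intrinsic to the Riemannian structure and hence $I(M)$-invariant (Facts~\ref{facts}(a)), this flow sends leaves of $\mathfrak s$ to leaves, and as distinct leaves are disjoint we conclude $\operatorname{Exp}(t\xi)(L(q)) = L(q)$. Hence $\xi \in \operatorname{Lie}(I^q(M))$, which forces $\xi|_{L(q)}$ to be tangent to $L(q)$ at every point; combined with $\xi|_{L(q)}\perp L(q)$, this gives $\xi|_{L(q)}=0$. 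For surjectivity, given $v\in (T_qL(q))^\perp$, pick $X\in\mathfrak K(M)$ with $X_q=v$ by homogeneity; the tangential part $X^T|_{L(q)}$ is a Killing field on the totally geodesic leaf $L(q)$ that vanishes at $q$, and since $\mathfrak k^q = [\mathfrak p^q,\mathfrak p^q]$ realizes the isotropy Lie algebra of the compact symmetric space $L(q)=G^q/K^q$ at $q$, there exists $Y\in\mathfrak k^q$ with $Y|_{L(q)} = X^T|_{L(q)}$; then $\xi := X - Y$ lies in $\mathfrak m^q$ with $\xi_q = v$.

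The main content is part (iii). The key identity I would prove is
\[
\bigl([X,[X,\xi]]\bigr)_{\gamma(t)} = -R_{X_{\gamma(t)}}\bigl(\xi_{\gamma(t)}\bigr) \qquad \text{for every }\xi\in\mathfrak K(M).
\]
To see this, note that $\operatorname{Exp}(tX)_*X = X$ together with $(\nabla X)_q = 0$ yield $(\nabla X)_{\gamma(t)} = 0$ for all $t$, so $[X,\xi]_{\gamma(t)} = (\nabla_X\xi)_{\gamma(t)} = J'(t)$ where $J(t) := \xi_{\gamma(t)}$; iterating and invoking the Jacobi equation $J'' = -R_{\gamma'}J$ gives the identity. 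Consequently, for every polynomial $P$ one has $(P((\operatorname{ad} X)^2)\xi)_{\gamma(t)} = P(-R_{X_{\gamma(t)}})(\xi_{\gamma(t)})$. Taking the Lagrange interpolation polynomial
\[
P_i(\mu) := \prod_{j\ne i}\frac{\mu + \lambda_j}{\lambda_j - \lambda_i},
\]
which satisfies $P_i(-\lambda_\ell) = \delta_{i\ell}$, the operator $P_i(-R_{X_{\gamma(t)}})$ is the orthogonal projection onto $E_i(t)$; hence $\eta := P_i((\operatorname{ad} X)^2)\xi \in \mathfrak K(M)$ satisfies $\eta_{\gamma(t)} = (\xi_{\gamma(t)})^i$, as required. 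The main subtlety lies in carefully establishing the transvection property $(\nabla X)_{\gamma(t)} = 0$ along the whole geodesic $\gamma$, since without this the iterated bracket identity for $(\operatorname{ad} X)^{2k}\xi$ breaks down.
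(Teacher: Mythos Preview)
Your approach matches the paper's in all three parts. Part (i) and the kernel computation in (ii) are handled identically; for (iii) you use the same key identity $[X,[X,\xi]]_{\gamma(t)} = -R_{\gamma'(t)}\xi_{\gamma(t)}$ (the paper derives it via the Jacobi equation and Facts~\ref{facts}(f)), and your Lagrange interpolation polynomial is precisely the linear-algebraic content of the paper's Vandermonde argument, just packaged more cleanly.

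There is one small gap in your surjectivity argument for (ii). You note that $X^T|_{L(q)}$ is a Killing field on $L(q)$ vanishing at $q$ and then assert it can be realized by some $Y\in\mathfrak k^q$. But $\mathfrak k^q|_{L(q)}$ is only the isotropy algebra of the \emph{transvection group} $G^q$ of $L(q)$, not of the full isometry group of $L(q)$; a priori $X^T|_{L(q)}$ is merely an intrinsic Killing field on $L(q)$, so you must first argue it lies in $\mathfrak g^q|_{L(q)}$. Your phrase ``the compact symmetric space $L(q)$'' does not help here, since $L(q)$ need not be compact (it may have a Euclidean local factor and fail to be a closed submanifold of $M$). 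The paper closes this gap by observing that $X^T|_{L(q)}$ is \emph{bounded} (because $M$ is compact), and a bounded Killing field on a globally symmetric space always lies in the transvection algebra; one then takes $Y\in\mathfrak g^q$ with $Y|_{L(q)}=X^T|_{L(q)}$ and sets $\xi=X-Y$. This is the only missing step; once inserted your proof is complete.
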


\begin {proof}
(i) For every $g \in I(M)$ the adjoint transformation $\Ad(g)$ maps Killing
fields to Killing fields. 
If, moreover, $g\in I^q(M)$, then $g(L(q)) = L(q)$, and thus $\Ad(g)$ maps
any Killing field which 
is perpendicular to $L(q)$ into a Killing field which 
is perpendicular to $L(q)$.
This proves the statement in (i). 

(ii) Let $w\in (T_qL(q))^\perp$ and choose  
$Z\in \mathfrak K (M)$ with $Z_q = w$. 
The orthogonal projection  $\bar Z^T$ of  $Z_{\vert L(q)}$ to $TL(q)$ is an 
intrinsic transvection of $L(q)$ since $\bar Z^T$ is bounded. Thus there 
exists $Y\in \mathfrak g ^q$ such that $ Y_{\vert L(q)} = \bar{Z}^T$. 
Then $Z-Y$ is always perpendicular to $L(q)$ and $\mathrm {Ev}^q(Z-Y)= 
(Z-Y)_q = w$. This shows that $\mathrm {Ev}^q$ is surjective. Let $\xi \in
\mathfrak m^q$ with 
$\xi _q = 0$. Then $\xi _q \in T_qL(q)$. Hence, 
since the foliation of 
symmetry $\mathcal L = 
\{L(q): q \in M\}$ is invariant under isometries, $\xi_{\vert L(q)}$ 
must always be tangent to 
$L(q)$. Therefore $\xi _{\vert L(q)} = 0$, which implies the second statement in
(ii).

(iii)  Since $X \in \mathfrak p ^{\gamma (t)}$, we have
  $\nabla _{X_{\gamma (t)}}\xi = 
\nabla _{X_{\gamma (t)}}\xi -
\nabla _{\xi _{\gamma (t)}}X = [X,\xi ]_{\gamma (t)}$, and therefore
$$[X,[X, \xi]]_{\gamma (t)} = \frac {\,D^2}{dt^2}
(\xi _{\gamma (t)})= 
-R_{\xi _{\gamma (t)},\gamma ' (t)}\gamma ' (t)$$
by the Jacobi equation. 
Let $J_i(t)$ be  the orthogonal projection onto $E_i(t)$ of the Jacobi 
field $J^\xi (t) ) =\xi_{\gamma (t)}$, $i= 0, \ldots , r$. Observe 
that $J_i(t)$ is a Jacobi field. 
Let $L : \mathfrak K (M) \to \mathfrak K (M)$ be the linear 
map defined by $L(\eta )= [X,[X,\eta]]$. 
Then $$L(\xi)_{\gamma (t)} = \lambda_0 J_0(t) + \ldots +\lambda _rJ_r(t), $$
where $-\lambda _i \geq 0$ is the eigenvalue 
of the Jacobi operator $R_{\cdot,\gamma ' (0)}\gamma ' (0)$
associated to
$E_i(0)$ ($\lambda _0 = 0$). Let us write 
$$L^j (\xi)_{\gamma (t)}  
= \lambda_0 ^jJ_0(t) + \ldots +\lambda _r^jJ_r(t)$$ 
for $j= 0, \ldots , r-1$, where $L^0 (\xi) = \xi$. 
The vectors  $v_0, \ldots, v_{r}$ 
of $\mathbb R ^{r+1}$ are linearly independent, where 
$v_j = (\lambda _j^0,\lambda _j^1 , \ldots ,\lambda _j^{r})$, 
$j=0, ..., r$   
(since the  determinant of Vandermonde is not zero). 
It is not hard to see that for every $i \in \{0,\ldots,r\}$ 
 there exist scalars $c(i)_0, \ldots, c(i)_{r}$
such that $$c(i)_0 \xi _{ \gamma (t)} + 
c(i)_1 L^1(\xi)_{ \gamma (t)} + \ldots + 
c(i)_r L^r(\xi)_{ \gamma (t)} = L^i ( \xi)_{\gamma (t)} = J_i(t). $$
Then $\eta = L^i ( \xi)$ has the desired properties. 
\end {proof}

We have the following stronger version of Theorem \ref {main}
for  the distribution of symmetry, which is a consequence of Theorem \ref
{main}, except for 
the last assertion which follows from Lemma \ref {isotropy}.

\begin{thm}\label {mainco}
Let $M$ be a compact, simply connected, Riemannian homogeneous manifold
with coindex of symmetry $k$. 
Assume that $M$ does not split off a symmetric de Rham factor. 
Then $k\geq 2$ and there exists a transitive semisimple normal Lie subgroup 
$G'$ of 
$I(M)$, whose Lie algebra is a complementary ideal to 
 $\mathfrak K (M)^{\mathfrak s}$,   
 such that $2\dim (G')\leq k(k+1)$. The
 equality holds if and only if 
the universal covering group 
of $G'$ is $\Spin (k+1)$. Moreover, if the equality holds and  
$k\geq 3$, then the isotropy group of $G'$ has positive dimension. 
\end{thm}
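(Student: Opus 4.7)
The plan is to deduce almost everything directly from Theorem \ref{main} applied to the distribution of symmetry, treating the final isotropy statement as the only genuinely new input, which will rest on Lemma \ref{isotropy}.

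I would take $\mathcal{D} = \mathfrak{s}$ and $G = I^o(M)$. The hypotheses of Theorem \ref{main} are all recorded in Section 4: the distribution of symmetry is $I^o(M)$-invariant, autoparallel, and strongly symmetric with respect to $I^o(M)$; by definition $\mathfrak{g}^{\mathfrak{s}} = \mathfrak{K}(M)^{\mathfrak{s}}$, and the corank of $\mathfrak{s}$ equals $\dim(M) - i_{\mathfrak{s}}(M) = k$. For the no-splitting condition, any Riemannian symmetric de Rham factor of $M$ would carry a parallel tangent distribution consisting everywhere of transvections, hence contained in $\mathfrak{s}$; so the assumption that $M$ does not split off a symmetric de Rham factor is precisely the hypothesis of Theorem \ref{main} applied to $\mathfrak{s}$.

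Theorem \ref{main} then delivers $k \geq 2$ together with a semisimple normal subgroup $G' \subset I^o(M)$, transitive on $M$, with Lie algebra complementary to $\mathfrak{K}(M)^{\mathfrak{s}}$ and satisfying $2\dim(G') \leq k(k+1)$, with equality if and only if the universal covering group of $G'$ is $\Spin(k+1)$. To promote normality from $I^o(M)$ to $I(M)$, I would observe that $\mathfrak{s}$ is $I(M)$-invariant by Facts \ref{facts}(a), so $\mathfrak{K}(M)^{\mathfrak{s}}$ is $\Ad(I(M))$-invariant; since $I^o(M)$ is compact, $\mathfrak{K}(M)$ carries an $\Ad(I(M))$-invariant inner product, and choosing $\mathfrak{g}'$ to be the orthogonal complement of $\mathfrak{K}(M)^{\mathfrak{s}}$ produces an $\Ad(I(M))$-invariant complementary ideal, so $G'$ is normal in $I(M)$.

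The only remaining point is the final assertion that, under equality and $k \geq 3$, the $G'$-isotropy at a point has positive dimension; here I would simply invoke Lemma \ref{isotropy}. I expect the main obstacle in that lemma to be ruling out the extreme situation in which $G'$ acts with discrete stabilizer, which would force $\dim(M) = k(k+1)/2$ and realize $M$ essentially as a quotient of $\Spin(k+1)$. One would then need to show, via the local constant-curvature quotient by the leaves of symmetry produced in the proof of Lemma \ref{premain} together with the identification of the induced $\so(k+1)$-action there, that such a model is incompatible with $\mathfrak{s}$ having corank exactly $k$ when $k \geq 3$.
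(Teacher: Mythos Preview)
Your proposal is correct and follows essentially the same route as the paper: the paper's proof of Theorem \ref{mainco} consists of a single sentence stating that everything except the last assertion is a consequence of Theorem \ref{main} (applied with $\mathcal D=\mathfrak s$ and $G=I^o(M)$), and that the last assertion follows from Lemma \ref{isotropy}. You in fact supply slightly more detail than the paper does, correctly verifying the no-splitting hypothesis of Theorem \ref{main} and explicitly addressing why $G'$ can be chosen normal in the full isometry group $I(M)$ rather than just in $I^o(M)$; for the latter you should simply say that $I(M)$ (not only $I^o(M)$) is compact since $M$ is compact, which is what guarantees the $\Ad(I(M))$-invariant inner product.
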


\begin{lem}\label {isotropy}
Assume that in Theorem \ref {mainco} the equality holds and so
 $G' = \Spin (k+1)$ acts transitively by isometries on $M$ 
(almost effective action). 
Then, if $k\geq3$, the isotropy 
group $\Spin (k+1)_q$  at $q \in M$ has  positive dimension
(or equivalently, since $M$ is simply connected, $\Spin (k+1)_q$ is not
trivial).
\end{lem}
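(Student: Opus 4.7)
The plan is to argue by contradiction: assume $\dim \Spin(k+1)_q = 0$, so that $G' = \Spin(k+1)$ acts almost freely on $M$. Then $\dim M = \dim G' = \frac{k(k+1)}{2}$ and the leaf of symmetry satisfies $\dim L(q) = \frac{k(k-1)}{2}$. From the proof of Lemma \ref{premain}, in the equality case the local quotient $\bar U$ has constant positive curvature and the induced $G'$-action agrees (locally) with the standard action of $\Spin(k+1)$ on $S^k$. Consequently $(G'_{L(q)})^\circ \cong \Spin(k)$, and, since $G'_q$ is finite and $\dim \Spin(k) = \dim L(q)$, this $\Spin(k)$ acts transitively and almost freely on $L(q)$. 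Thus $L(q)$ is a Lie-group quotient $\Spin(k)/F$ for a finite $F$; being moreover a symmetric space (leaf of symmetry), its induced metric must be bi-invariant, so $L(q)$ is a group-type symmetric space.

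Using the almost-direct-product decomposition $\bar G^q = \bar G'^q \cdot G^{\mathcal D}|_{L(q)}$ of Section~3 together with Remark \ref{effectively}, I would then identify $G^{\mathcal D}|_{L(q)}$ as the second $\Spin(k)$-factor of the group-type transvection group of $L(q)$ (whose total dimension is $k(k-1)$), obtaining $\dim G^{\mathcal D} = \frac{k(k-1)}{2}$. Identifying $M$ with $\Spin(k+1)$ so that $G'$ acts by left translations, the commutation of $G^{\mathcal D}$ with $G'$ forces $G^{\mathcal D}$ to embed in $\Spin(k+1)$ as $\Spin(k)$ acting by right translations. Hence the metric on $M$ is left-invariant and $\Ad(\Spin(k))$-invariant, and for $k \geq 3$ with $k \neq 4$ it is determined by a two-parameter inner product on the $\Spin(k)$-irreducible decomposition $\so(k+1) = \so(k) \oplus \mathbb R^k$ (the case $k = 4$, where $\so(4)$ splits further, would require separate bookkeeping).

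Finally, I would impose the defining property of the distribution of symmetry, namely $\mathfrak s_e = \so(k)$: for every $X \in \so(k)$ there must exist a Killing field $\tilde X = (X - Z)^R + Z^L$ with $Z \in \so(k)$ satisfying $(\nabla \tilde X)_e = 0$. Writing out this condition via the Koszul formula produces an algebraic system relating the scaling parameters of the $\Ad(\Spin(k))$-invariant inner product. The main obstacle of the proof is to verify that, for $k \geq 3$, the only solutions of this system correspond to bi-invariant inner products on $\so(k+1)$; granting this, $M$ becomes a symmetric space and $ci_{\mathfrak s}(M) = 0$, contradicting $k \geq 3$.
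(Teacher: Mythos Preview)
Your overall strategy coincides with the paper's: assume trivial isotropy, identify $M$ with $\Spin(k+1)$ carrying a left-invariant metric that is also right $\Spin(k)$-invariant, and then show that the transvection condition along $\mathfrak s_e=\so(k)$ forces the metric to be bi-invariant. The setup you describe through the identification of $G^{\mathcal D}$ with a right $\Spin(k)$-action is exactly what the paper obtains in items (a)--(f) of its proof.

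There is, however, a genuine gap: the step you label the ``main obstacle'' is precisely the heart of the argument, and you do not carry it out. The paper does not attack this by parametrizing all $\Ad(\Spin(k))$-invariant inner products and solving the Koszul system. Instead it introduces the geometric subspace
\[
\mathfrak m=\{Y\in\so(k+1):\ Y_{|L(e)}\perp L(e)\},
\]
shows it is an $\Ad(\Spin(k))$-invariant complement of $\so(k)$ in $\so(k+1)$, and then invokes the uniqueness of reductive decompositions of $\SO(k+1)/\SO(k)$ (Remark \ref{uniquereductive}) to conclude $\mathfrak m=\so(k)^\perp$ for $k\neq 3$. The contradiction then drops out of a single application of the bracket formula $2\langle[\xi,X],\eta\rangle_e=-\langle X,[\xi,\eta]\rangle_e$ (valid for any transvection $X$ at $e$ and $\xi,\eta\in\mathfrak m$), compared against the same identity for the bi-invariant metric. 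This bypasses the algebraic system entirely.

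One further correction to your plan: the exceptional case is not only $k=4$. For $k=3$ the two summands $\so(3)$ and $\so(3)^\perp\subset\so(4)$ are \emph{equivalent} $\Ad(\Spin(3))$-modules (both are the adjoint $\cong$ standard representation), so the space of invariant inner products is three-dimensional, not two; your ``two-parameter'' claim fails there as well. In the paper this shows up as the non-uniqueness of the reductive complement for $\SO(4)/\SO(3)$, and the case $k=3$ is handled separately via the explicit list of such complements in Remark \ref{uniquereductive}.
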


\begin {proof}
Assume that the isotropy group $\Spin (k+1)_q$ is trivial.
Let $\mathfrak s$ be the distribution of symmetry, which has dimension  
$\frac 1 2 k(k-1)$, since $\dim (\Spin (k+1)) = \frac 1 2 k(k+1)$.  
Let $q\in M$ and  define
$$\Spin (k +1)^q = \{g\in \Spin (k +1): g(L(q))=L(q)\}.$$
Since the isotropy group $\Spin (k +1)_q $ is trivial, the group 
$\Spin (k +1)^q$ acts effectively on $L(q)$ and so it can 
be identified with the group 
$$\overline {\Spin} (k +1)^q = \{g_{\vert L(q)}
\in \Spin (k +1): g(L(q))=L(q)\}_o.$$
From Theorem \ref {mainco} the isometry algebra 
is given by the following sum of ideals:
\begin {equation}
\label {eq21-10}
 \mathfrak K (M) =
\mathfrak {so}(k+1) \oplus  \mathfrak K (M)^{\mathfrak s}.
\end {equation}

In the notation of this section, since $\Spin (k+1)$ is a normal 
subgroup of $I(M)$, $\overline {\Spin }(k+1)^q$ 
is a normal subgroup of   $\bar G^q$, where 
$\bar G^q$ is  the transvection group at $q$, restricted to $L(q)$.
Then, since $\overline {\Spin }(k+1)^q$ acts simply transitively on 
$L(q)$, $L(q)$ must be a Lie group with a bi-invariant Riemannian
metric (see Lemma \ref {irreduciblesym}).
In general, $L(q)$ could be non-simply connected.  
Observe that no element $g\in I (M)^{\mathfrak s}$, the subgroup of $I(M)$
associated with the ideal  $\mathfrak K(M)^{\mathfrak s}$, can  belong to   
the full isotropy group $I (M)_q$. In fact, since $g$ commutes with 
 $ \Spin (k+1)^q$, which is transitive on $L(q)$, 
 $g$ must be the identity on $L(q)$ and therefore $g = e$ (see Remark  
\ref {effectively}). Note also that $ {\Spin }(k+1)^q$ is semisimple, 
since the quotient $\Spin (k+1)/ {\Spin }(k+1)^q$ is (equivariantly) 
isomorphic 
to $\SO (k+1)/\SO (k)$ (see the proof of Lemma \ref {premain}).
Then $L(q)$ has no flat  factor locally. 
Using (\ref {eq21-10}) this implies  that
 $\dim (\mathfrak K(M)^{\mathfrak s}) = 
\dim (L(q)) = \dim  (\Spin (k+1)^q)$ and that 
$\mathfrak g ^q = \mathfrak {so} (k) \oplus 
\mathfrak K (M)^{\mathfrak s} \simeq \mathfrak {so} (k) 
\oplus \mathfrak {so} (k) $. 
 
Then  $I^o(M) = \Spin (k+1)\times \Spin ' (k)$, 
where the 
second factor is the subgroup 
$\Spin (k)\subset \Spin (k+1)$, but acting  from the
right on $M \simeq \Spin (k+1)$,
that is, if $g\in \Spin ' (k)$ then $g(q) = qg^{-1}$.
Note that $I (M)^{\mathfrak s}$ must be transitive 
on $L(q)$ and so on any maximal integral manifold of $\mathfrak s$. 
This implies that the Riemannian metric on $M = \Spin (k+1)$ induces a
Riemannian 
submersion onto the quotient
 \[ \Spin (k+1)/\Spin (k+1)^q
\simeq \SO (k+1)/\SO (k),\]
 which is a sphere. 
We are now in the following situation: 
\begin{enumerate}[(a)]
 \item $M = \Spin (k+1)$.
\item $I^o (M) = \Spin (k+1)\times \Spin '(k)$.
\item The distribution of symmetry is 
$$g\mapsto \so '(k) g = \Ad (g)(\so (k)g), \ 
g\in M\simeq \Spin (k+1).$$ 
\item The maximal integral manifolds of the distribution of symmetry are 
\[L(g) = \Spin '(k)g = g\Spin (k).\]
\item The isotropy group at $e$ is  $$(I ^o (M))_e 
= \text {diag}(\Spin (k)) = \{(h,h)\in \Spin (k)\times \Spin '(k)
: h\in \Spin (k)\}.$$
\item $ K^e = (I ^o (M))_e $, $\mathfrak k ^e = 
\text {diag}(\so (k))$, $\mathfrak p ^e = \{(v,-v) \in 
\so (k) \times \so '(k)\}$,   $G^e =  
\Spin (k)\times \Spin ' (k)$, $\mathfrak g ^e = 
\so (k) \oplus \so '(k)$.
Recall that $K^e$ acts 
almost effectively on $L(e)$ (see Facts \ref {facts})).
\end{enumerate}

Let $X\in \so (k+1)\subset \so (k+1) \oplus \so '(k)\simeq 
\mathfrak K (M)$. Then the orthogonal projection $\bar X$ of $X_{\vert L(e)}$ to
$TL(e)$ is a bounded Killing field on $L(e)$ and so it belongs to 
$\mathfrak g ^e_{\vert L(e)}$.
Since $X$ commutes with any Killing 
field induced by $\so '(k)$, and $\Spin ' (k)$ preserve the distribution 
of symmetry, we see that $\so ' (k)_{\vert L(e)}$ commutes with $\bar X$.
Then there must exist 
$Z\in \so (k)$ such that 
 $\bar X  = \bar Z$, where $\bar Z$ denotes the restriction of $Z$ to 
$L(e)$. Then $ Y = X-Z\in \so (k+1)$ is a Killing field whose restriction to
$L(e)$ is 
always perpendicular to $L(e)$.   Note that in this way we can construct such a
Killing 
field $Y$ with an arbitrary initial condition $Y_e \in \mathfrak s ^
\perp$. 

Let $$\mathfrak m = \{ Y \in \so (k+1): Y_{\vert L(e)} 
\text { is perpendicular to }L(e)\}.$$
Then $\mathfrak m$ is an $\Ad (\Spin (k))$-invariant
complementary subspace of $\so (k) $ in $\so (k+1)$.
By Lemma \ref {uniquereductive}, if $k\neq 3$, 
 $\mathfrak m = 
\so (k) ^\perp$, where the orthogonal complement is with 
respect to the Killing form of $\so (k+1)$. 
We equip  $M\simeq \Spin (k+1)$ with the bi-invariant Riemannian metric 
$(\cdot , \cdot)$. Note  that  
$I^o (M) = \Spin (k+1) \times \Spin  (k) 
\subset I^o (M,(\cdot , \cdot))
= \Spin (k+1)\times \Spin ' (k+1)$. 

If $\xi , \eta \in \mathfrak m = 
\so (k) ^\perp$, then these two Killing fields are perpendicular 
to $L(e) =\Spin (k)\cdot e$ with respect to both Riemannian metrics $(\cdot ,
\cdot )$ 
and $\langle \cdot , \cdot\rangle$ (the given one). 
Moreover,  if $X\in \mathfrak p ^e$, then $X$ is a parallel vector field 
at $e$ with respect to both metrics. 
Note that the canonical projection to $S^k = \Spin (k+1)/\Spin (k)$ 
is a Riemannian submersion 
(up to rescaling) with respect to any of the two metrics on $M$. 
So, up to rescaling, $(\cdot , \cdot )$ coincides with 
$\langle \cdot , \cdot\rangle$ on $\so (k) ^\perp \simeq (\mathfrak s _e)^
\perp$. 
Unless $(\cdot , \cdot ) = \langle \cdot , \cdot \rangle$, this contradicts
the 
so-called bracket formula of Proposition 3.6 of 
\cite {OlmosReggianiTamaru}:
\begin{equation} \label{star1}
2\langle [\xi , X], \eta \rangle _e = - \langle X, [\xi , \eta ]\rangle _e\ ,\ 
2(   [\xi , X], \eta) _e = -  ( X, [\xi , \eta ])_e ,
\end{equation} 
taking into account  that $[\so (k)^\perp , \so (k)^\perp] = 
\so (k)$. 
Then, if $k\neq 3$, $M\simeq \Spin (k +1)$ has a 
bi-invariant metric and thus $M$ is a symmetric space, which is
a contradiction, since the coindex of symmetry is $k$. 
Therefore the isotropy group is non-trivial if $k\neq 3$. 

The case $k=3$ is more involved since $\SO (4)$ 
is not simple. 
Since $\Spin (4)$ acts almost effectively on the quotient 
$\Spin (4)/ \Spin (4)^e$ of $M$ by the leaves of symmetry
(see the proof of Lemma \ref {premain}),
we see that $\Spin (4) ^ e$ cannot be a factor of $\Spin (4)$. Then, according
to Remark \ref {quotient}, $\Spin (4) ^ e \simeq \Spin (3)$ 
is the  subgroup of $\Spin (4)$ which is equivalent 
to the diagonal inclusion of $\Spin (3)$ in $\Spin (4) 
= \Spin (3) \times \Spin (3)$. 
As remarked above, 
$\mathfrak m = \{ Y \in \so (4): Y_{\vert L(e)} 
\text { is perpendicular to }L(e)\}$
 is an $\Ad (\Spin (3))$-invariant
complementary subspace of $\so (3) $ in $\so (4)$ 
and gives a reductive decomposition of 
$\Spin (3)\times \Spin (3)/ \text {diag} (\Spin (3))$. 

We still have to deal with the cases (1) and (2) of Remark 
\ref {uniquereductive}.
In the first case $\mathfrak m$ is the orthogonal complement 
with respect to an $\Ad (\SO (4))$-invariant bilinear form
$Q$. 
Such a form $Q$ is equal to $B$ on the first ideal  of $\so (4) = \so (3) 
\oplus \so (3)$ and equal to $\lambda B$ on the second ideal, 
where $0\neq \lambda \neq -1$ and $-B$ is the Killing form 
of $\so (3)$.
The bilinear form $Q$ induces on  $M= \Spin (4)$  a 
bi-invariant pseudo-Riemannian metric. Then $M$ is a pseudo-Riemannian
product of $\Spin (3)$ with a bi-invariant Riemannian metric and 
$\Spin (3)$ with a Riemannian or anti-Riemannian metric 
(depending on the sign of $\lambda$). 
If $(\cdot , \cdot ) = Q$ we get 
the same contradiction as in (\ref{star1}) unless $\langle \cdot , \cdot
\rangle$ 
is proportional to $Q$. Thus $Q$ is positive definite 
and $M$ is a symmetric space. which gives a contradiction. Therefore 
the isotropy group cannot be trivial. 

Let us now consider case (2) of Remark 
\ref {uniquereductive}, where 
$\mathfrak m \simeq (\so (3), 0) \subset \so (3) \oplus \so (3)$ 
(the other case $\mathfrak m \simeq (0, \so (3))$  is analogous). 
In this case, the distribution perpendicular to $\mathfrak s$ 
is integrable with maximal integral manifolds  $H \cdot q$, where 
$H$ is the first factor of $\Spin (4)$. 
Since the projection of $M$ onto the quotient of $M$ by the leaves
of symmetry is a Riemannian submersion, 
the orbit $H  \cdot q$ is a totally geodesic submanifold of $M$ for every $q\in
M$. 
Thus $(\mathfrak s)^\perp  $ and $\mathfrak s$ 
 are autoparallel distributions and hence both parallel distributions.
This implies that $M$ is a Riemannian product, which is a contradiction. 
Altogether we conclude now that the isotropy group of $\Spin (4)$ is not
trivial.
\end {proof}

\begin{rem} \label {uniquereductive} 
The second and third author observed in Remark 2.1 of \cite {OlmosReggiani2}
that there is only one naturally reductive decomposition on the homogeneous
space
 $\SO (n+1)/\SO (n)$ if $n\neq 3$. 
The assumption that the reductive 
decomposition is naturally reductive is not necessary. In fact, let
$\nabla $ be the Levi-Civita connection on  
$S^n = \SO (n+1)/\SO (n)$ and
$\nabla ^c$ be the canonical
connection associated with a reductive decomposition on the homogeneous space
$\SO (n+1)/\SO (n)$, and define $D = \nabla -\nabla ^c$.
We will show that $D$ is totally skew. 
Since $\nabla ^c $ is a metric connection, we have
$\langle D_XY,Y\rangle =0$ for all vector fields $X,Y$ on $S^n$. So we only need
to show 
that $\langle D_XX,Z\rangle =0$ for perpendicular vector fields $X,Z$ on $S^n$.
Since for $n= 1$ there is no isotropy group, we have $D =0$. 
If $n=2$ then there is only one reductive decomposition 
$\so (3) = \so (2) + \mathbb V$, where $\mathbb V$ is the orthogonal 
complement to $\so (2)$ with respect to the Killing form of $\so (3)$.
This is because of the fact that $\mathbb V$ is the only irreducible 
$\SO (2)$ invariant subspace. 

Thus we may assume that 
$n\geq 3$. Let $h\in \SO (n+1) _q \simeq \SO (n) $ 
be such that $h(q) = q$, $dh(x) = x$ and 
$dh(z) = -z$. 
Then, since $D$ is $\SO (n+1)$ invariant $\langle D_xx,z\rangle
= 0$. Then $D$ is totally skew  and $\nabla ^c$ is 
associated with a naturally reductive decomposition. 
Moreover, $D$ is parallel (since it is invariant under the transvections 
of $S^n$). Hence $\langle D_..,.\rangle$ is a harmonic $3$-form which represent 
a $3$-cohomology class on $S^n$. Then $D=0$, if $n\neq 3$. 

Observe that for $n=3$ the above argument implies that 
$D$ is also totally skew. So  a reductive decomposition on $\SO(4)/\SO(3)$ 
must be 
naturally reductive. It is well-known that  
there is a one parameter family on naturally reductive decompositions 
on the Lie group $S^3 \simeq \Spin (3)$. 

The only reductive decomposition on the space $\SO (n+1)/\SO(n)$ 
is the orthogonal complement to $\so (n)$ in $\so (n+1)$, with 
respect to minus the Killing form of $\so (n+1)$. The reductive 
decompositions on $\SO (4)/\SO(3)\simeq \Spin (3)\times \Spin (3) /
\text {diag} (\Spin (3))$ are of one two types 
(cf. \cite {OlmosReggianiTamaru}, Section 5): 
\begin{enumerate}[(1)]
\item The orthogonal complement 
to $\text {diag} (\so (3))$ with respect to a bi-invariant 
pseudo-Riemannian (non-degenerate) 
scalar product on $\so (4) = \so (3) \oplus 
\so (3)$. Such an inner product has to be a multiple of minus 
the Killing 
form on each factor of $\so (4)$. These multiples, up to rescaling, 
are 
$\lambda _1 =1 $, 
$\lambda _2 \in \mathbb R$, $0\neq \lambda _ 2 \neq -1$. 
In this case the transvection group associated with the canonical connection 
is $\Spin (4)$. 
\item The reductive complement of $\text {diag} (\so (3))$ 
is either $(\so (3), 0)$ or $(0,\so (3))$. The trans\-vection group is either 
$\Spin (3)$, regarded as the left factor of $\Spin (4)$ or 
$\Spin (3)$, regarded as the right  factor of $\Spin (4)$. 
In both cases the canonical connection is flat. 
\end{enumerate}
\end{rem}

\begin{rem}\label {quotient}
Let $H$ be a connected  Lie subgroup of $\Spin (k+1)$ of
codimension $k \geq 2$. 
\begin{enumerate}[(i)]
\item If $k\neq 3$,  then $\Spin (k+1)/H$ is equivariantly
isomorphic to the sphere $S^k = \SO (k+1)/\SO(k)$. 
\item If $k=3$, then  $H$ is either  one factor of $\Spin (4) 
=\Spin (3)\times \Spin (3)$ or $\Spin (4)/H$ is equivariantly 
isomorphic to the sphere $S^3 = \SO (4)/SO(3)$.
\end{enumerate}
In fact, assume that no normal subgroup of 
$\Spin (k+1)$ with positive dimension is contained in the closure  $\bar H$ of
$H$. 
This is always the case if $k \neq 3$, since 
$\Spin (k+1)$ is a simple Lie group for $k \neq 3$. Note that 
$\bar H \neq \Spin (k+1)$, because otherwise
the Lie algebra of $\Spin (k+1)$ would 
have a flat factor. 
 Then $\Spin (k+1)$ acts 
almost effectively on the $k'$-dimensional compact quotient $M
 = \Spin (k+1)/\bar {H}$, where $0\leq k'\leq k$.
The manifold $M$ 
is simply connected since $\Spin (k+1)$ 
is simply connected and $\bar H$ is connected.
Since the dimension of the isometry group of $M$ is at 
least $k(k+1)/2$, then $M$ is isometric to a sphere,  
$k'=k$ and $\bar H = H$. 
Moreover, the effectivized action of $\Spin (k+1)$ 
gives the identity component of the full isometry group of the sphere, which is
isomorphic 
to $\SO (k+1)$. 
\end{rem}

\section{Classification for co-index of symmetry equal to 3}

Let $M= G/H$ be an $(r+3)$-dimensional ($r \geq 1$) compact simply connected
homogeneous
Riemannian manifold with coindex of symmetry $k = 3$. By
Theorem \ref{main} there exists a compact semisimple normal
subgroup $G'$ of $G$ with $\dim(G') \leq 6$ which acts transitively on $M$. We
may assume
that $G'$ is simply connected and that the action of $G'$ on $M$ is almost
effective. The only
possibilities for such a group are $G'= \Spin(4) = \Spin(3) \times
\Spin(3)$ and $G' = \Spin(3)$. However, since
 $M$ has a positive index of symmetry, we cannot have  $G' = \Spin(3)$.
Therefore $G' =
\Spin(4)$, which has dimension $6$, and so the dimension $d$ of
the isotropy group must satisfy $d \in \{0,1,2\}$. The case $d = 0$ can be
excluded from the last statement of Theorem \ref{mainco}.
If $d = 2$, then the isotropy group
is, up to conjugation, the standard torus $S^1 \times S^1 \subset \Spin(3)
\times \Spin(3)$. Such a quotient space, with any $G'$-invariant Riemannian
metric, is
the Riemannian product of two $2$-spheres. This implies that $M$ is symmetric
and so this
case can also be disregarded. 

We can therefore assume that the
dimension $d$ of the isotropy group $T$ is $1$. Thus $M$ is
$5$-dimensional and its index of symmetry is $2$. For such a
subgroup there are infinitely many possibilities, depending on the
different velocities of the projections of this subgroup to the
two factors. However, this is never the case when the index of symmetry is $2$
in which case we have the following lemma which uses the results of
the general theory we developed in Section 5. 

\begin{lem}\label{normal position} 
Let $M = \Spin(4)/T$ be a $5$-dimensional compact simply connected homogeneous
Riemannian manifold with coindex of symmetry $k = 3$.
Then, up to conjugation, $T = \diag(S^1) = \{(u, u) \in
  \Spin(3) \times \Spin (3): u \in S^1\}$. Moreover, after making
   the action effective, $M = \SO(4)/\SO(2)$, which is isometric to
  the unit tangent bundle of the $3$-sphere with an $\SO(4)$-invariant
Riemannian metric.
\end {lem}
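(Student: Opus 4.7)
The plan is to parameterize the $1$-dimensional isotropy $T\subset\Spin(4)=\Spin(3)\times\Spin(3)$ up to conjugation, compute the isotypic decomposition of $T_{[e]}M$ under $T$, and use the autoparallelism of the rank-$2$ distribution $\mathfrak s$ to force $T=\diag(S^1)$. Up to conjugation in $\Spin(4)$, $T$ is generated by $(pe_3,qe_3)\in\mathfrak{so}(3)\oplus\mathfrak{so}(3)$ for some coprime nonnegative integers $p,q$, not both zero (closedness of $T$ forces $p/q\in\mathbb Q$). Taking the reductive complement $\mathfrak m$ orthogonal to $\mathfrak t=\mathbb R(pe_3,qe_3)$ with respect to a bi-invariant inner product on $\mathfrak{so}(4)$, the isotypic decomposition under $T$ reads $\mathfrak m=V_0\oplus V_p\oplus V_q$, where $V_0=\mathbb R(qe_3,-pe_3)$ is trivial of dimension~$1$, $V_p=\mathrm{span}((e_1,0),(e_2,0))$ is the $2$-dimensional rotation representation at speed~$p$, and $V_q=\mathrm{span}((0,e_1),(0,e_2))$ is the $2$-dimensional rotation representation at speed~$q$; when $p$ or $q$ vanishes, the corresponding summand merges into $V_0$ as a trivial $3$-dimensional piece.

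If $p=0$ (or symmetrically $q=0$), Schur's lemma forces every $\Spin(4)$-invariant metric on $M$ to split orthogonally, yielding the Riemannian product $M=\Spin(3)\times(\Spin(3)/S^1)=\Spin(3)\times S^2$ and exhibiting the symmetric de Rham factor $S^2$, which contradicts the irreducibility assumption. If $p,q>0$ with $p\neq q$, then $V_p$ and $V_q$ are inequivalent $T$-representations, so any $T$-invariant $2$-dimensional subspace of $\mathfrak m$ is either $V_p$ or $V_q$; after possibly swapping, $\mathfrak s_{[e]}=V_p$. To reach a contradiction I will show that $V_p$ is not autoparallel under any invariant metric.

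For this, apply the Nomizu formula $\nabla_X Y=-\tfrac12[X,Y]_{\mathfrak m}+U(X,Y)$ with $X=(e_1,0)$ and $Y=(e_2,0)$. The bracket $[X,Y]=(e_3,0)$ splits along $\mathfrak t\oplus\mathfrak m$ as $\tfrac{p}{p^2+q^2}(pe_3,qe_3)+\tfrac{q}{\sqrt{p^2+q^2}}f_0$, where $f_0=(qe_3,-pe_3)/\sqrt{p^2+q^2}$, so $[X,Y]_{\mathfrak m}=\tfrac{q}{\sqrt{p^2+q^2}}f_0\in V_0$. The tensor $U(X,Y)$ vanishes: inequivalence of $V_0$, $V_p$, $V_q$ makes every $T$-invariant inner product block-diagonal on these summands, so $\langle U(X,Y),Z\rangle=0$ for $Z\in V_p$ and for $Z\in V_q$ (the relevant brackets project to $V_0$ or to $\{0\}$, respectively), while for $Z=f_0$ the two contributions cancel by antisymmetry of $\operatorname{ad}(e_3)$ on $V_p$. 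Hence $\nabla_X Y=-\tfrac{q}{2\sqrt{p^2+q^2}}f_0\in V_0\setminus V_p$, so $V_p$ is not autoparallel; symmetrically, neither is $V_q$, giving the required contradiction with the autoparallelism of $\mathfrak s$.

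Therefore $(p,q)=(1,1)$ and $T=\diag(S^1)$ up to conjugation. For the effectivization, the kernel $\{\pm(1,1)\}\cong\mathbb Z_2$ of $\Spin(4)\to\SO(4)$ is contained in $\diag(S^1)$ (at parameter~$\pi$, since $\exp(\pi e_3)=-1$ in $\Spin(3)$), so the effective homogeneous space is $\SO(4)/\SO(2)$, classically identified with the Stiefel manifold $V_2(\mathbb R^4)=T_1 S^3$. The main obstacle in the argument is the $U(X,Y)=0$ computation when $p\neq q$, but it reduces to a clean Schur-type cancellation thanks to the inequivalence of the three isotypic summands of $\mathfrak m$.
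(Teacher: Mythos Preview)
Your proof is correct and takes a genuinely different route from the paper's. The paper argues abstractly via the structure theory: writing $T_pM=\mathfrak s_p\oplus\mathbb V\oplus\mathbb L$, it shows the kernels $\Phi_1,\Phi_2$ of $T\to\SO(\mathfrak s_p)$ and $T\to\SO(\mathbb V)$ are trivial. For $\Phi_1$, Proposition~\ref{normal} produces an autoparallel distribution $\mathcal D^{\Phi_1}\supset\mathfrak s$ of corank $2$, Lemma~\ref{A} promotes it to a strongly symmetric one, and Theorem~\ref{main} then forces $\dim M\le 3$. For $\Phi_2$, the fixed-point distribution is $\mathfrak s^\perp$, so $M$ would split. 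Triviality of both kernels means the isotropy weights on the two $2$-planes are $\pm1$, whence $T=\diag(S^1)$. You instead parametrize $T$ by $(p,q)$, pin down $\mathfrak s_{[e]}$ among the isotypic summands $V_0,V_p,V_q$, and check directly via Nomizu's formula that $V_p$ is never autoparallel when $p\neq q$. Your method is self-contained and does not call on Lemma~\ref{A} or Theorem~\ref{main}; the paper's argument shows the general theory doing real work and would adapt more readily to other low-corank cases. One small remark: both arguments tacitly use that $M$ has no symmetric de Rham factor (your $p=0$ case, the paper's appeal to Theorem~\ref{main}); this is part of the ambient hypotheses of Section~6 rather than of the lemma as literally stated.
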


\begin {proof}
We choose $p \in M$ such that $T$ is the isotropy group of $\Spin(4)$ at $p$.
Note that $T$ is connected since $M$ is simply connected. We consider $T$ as a
subgroup of $\SO(T_pM)$ via the isotropy representation of $M = \Spin(4)/T$ at
$p$. 
Since the distribution of symmetry $\mathfrak s$ is invariant under the action
of $\Spin(4)$ we see that $\mathfrak s_p$ is a $T$-invariant $2$-dimensional
subspace of $T_pM$.
We decompose $T_pM$ orthogonally into $T$-invariant subspaces,
 \[ T_pM = \mathfrak s_p \oplus \mathbb V \oplus \mathbb L, \]
where $\dim(\mathbb V) = 2$ and $\dim(\mathbb L) = 1$. Note that the action of
$T$ on $\mathfrak s_p$ or on $\mathbb V$ may be trivial. Let $\rho_1: T \to
\so(\mathfrak s_p)$, $\rho_1(h) =
  h_{\vert\mathfrak s_p}$ and let $\rho_2: T \to \so(\mathbb V)$,
  $\rho_2(h) = h_{\vert\mathbb V}$. It is not hard to see the
  following: \emph{if $\rho _1$ and $\rho _2$ are both (Lie group)
    isomorphisms, then $T$ is standard}. Namely, $T$ is conjugated to
  $\diag(S^1) = \{(h, h) \in \Spin(3) \times \Spin(3): h \in S^1\}$,
  where $S^1$ is any $1$-dimensional Lie subgroup of $\Spin (3)$.
  
  Let us show that both $\rho_1$ and $\rho_2$ are isomorphisms. Let
  $\Phi_i$ be the kernel of $\rho_i$, $i = 1, 2$. Since $T$ is
  abelian, then $\Phi_1$ and $\Phi_2$ are normal subgroups of the
  isotropy group $T$ at $p$.
  
  We first assume that $\Phi_1$ is not trivial.
  Then, in the notation of Proposition \ref{normal}, $\mathcal
  D^{\Phi_1}$ is the (unique) $\Spin(4)$-invariant autoparallel
  distribution with $\mathcal D^{\Phi_1}_p = \mathfrak s_p \oplus
  \mathbb L$. Due to Lemma~\ref{A} this distribution is strongly
  symmetric with respect to $\Spin(4)$. Moreover, $\mathfrak s$
  restricted to any integral manifold $F^{\Phi_1}(q)$ is a parallel
  distribution. Observe that the corank of $\mathcal D^{\Phi_1}$
  is $2$. Then, by Theorem \ref{main}, if $M$ does not split off a
  symmetric de Rham factor, $\dim(M) \leq 3$ (since there is
  $3$-dimensional group which is transitive on $M$). This is a contradiction and
hence $\Phi_1$ is trivial.
  
  We next assume that $\Phi_2$ is not trivial.
  Then, in the notation of Proposition \ref{normal}, $\mathcal
  D^{\Phi_2}$ is the (unique) $\Spin(4)$-invariant autoparallel
  distribution with $\mathcal D^{\Phi_2}_p = \mathbb V \oplus \mathbb
  L$. Observe that $\mathcal D^{\Phi_2} = \mathfrak s^\perp$. Since
  $\mathfrak s$ is also autoparallel, both distributions must be
  parallel and so $M$ splits off a symmetric space. This is a contradiction and
hence $\Phi _2$ is trivial.

It now follows that  $T$ is standard and so $M = \Spin(3) \times
  \Spin(3)/\diag(S^1)$. After making the action
  effective, this homogeneous space becomes $\SO(4)/\SO(2)$, where $\SO(2)$ is
naturally included
  in $\SO(4)$. So $M = \SO(4)/\SO(2)$, which is isometric to the unit tangent
  bundle of the $3$-sphere with a suitable $\SO(4)$-invariant Riemannian metric.
\end {proof}

We have proved that $M = \SO(4)/\SO(2)$. Let us determine the leaf of
symmetry at $p = [e]$. The subspace of vectors of $T_pM$ which are
fixed by the isotropy group $\SO(2)$ has dimension $1$. So the
$2$-dimensional leaf of symmetry $L(p)$ has non-trivial isotropy group. Thus
$L(p)$ is covered by a $2$-dimensional sphere and so the transvection group
$G^p$ is
$3$-dimensional (with Lie algebra isomorphic to $\so(3)$ and $K^p =
\SO(2)$). Since $\SO(2) \subset G^p$, $G^p$ cannot be contained in a
local factor of $\SO(4)$ (i.e., a factor corresponding to the
decomposition of $\Spin(4) = \Spin(3) \times \Spin(3)$). Then, by (ii)
of Remark \ref{quotient}, $\SO(4)/G^p$ is equivariantly isomorphic to
$\SO(4)/\SO(3)$. This isomorphism maps $\SO(2)$ into a $1$-dimensional
subgroup of $\SO(3)$. Such a group is conjugate in $\SO(3)$ to the
standard $\SO(2)$. Thus we may assume that $M = \SO(4)/\SO(2)$ and that
the leaf of symmetry at $p$ is given by
$$
L(p) = \SO(3)/\SO(2) \subset \SO(4)/\SO(2) = M.
$$

We have to determine  the $\SO(4)$-invariant metrics on $M =
\SO(4)/\SO(2)$ for which the index of symmetry is $2$.
As we observed above, the isotropy group $\SO(2)$ coincides with
the isotropy group $K^p$ of the transvection group $G^p =
\SO(3)$. In particular, since $K^p$ acts almost effectively on $L(p) =
\SO(3) \cdot p$ (see Facts \ref{facts}), we obtain that 
$$
H ^p := \{g \in G: g_{\vert L(p)} = \Id_{\vert L(p)}\}^o
$$
is trivial.

As we have noted before, if $\xi \in \so(4)$, regarded as a Killing
field of $M$, then there is $Z \in \mathfrak g^q $ such that $\xi -
Z$, restricted to $L(p)$, is always perpendicular to $L(p)$ (since the
projection of $\xi_{\vert L(p)}$ to $L(p)$ is an intrinsic
transvection of $L(p)$). Then, since $M$ is homogeneous, for any $u\in
(T_pL(p))^\perp$ there exists $\xi \in \so(4)$ such that $\xi . p
= u$ and $\xi$, restricted to $L(p)$, is always perpendicular to
$L(p)$. Moreover, such a $\xi$ is unique. In fact, assume that  $\eta
\in \so(4)$ is always perpendicular to $L(p)$ and $\eta.p =
0$. Then $\eta$ belongs to the isotropy algebra which coincides, as
previously observed, with $\mathfrak k^p$. Therefore $\eta$ is always
tangent to $L(p)$. It follows that $\eta_{\vert L(p)} = 0$ and so it belongs to
the Lie algebra $\mathfrak h^p$ of  $H^p$. This Lie algebra is trivial and thus
we have $\eta =0$.

Let 
$$
\mathfrak m = \{\xi \in \so(4): \xi_{\vert L(e)} \text { is always
  perpendicular to }L(p)\}.
$$
Then, since $L(p)$ is invariant under the action of $\SO(3)$, $\mathfrak m$ is
an
$\Ad(\SO(3))$-invariant subspace of $\so (4)$. Since the evaluation at
$p$, from $\mathfrak m$ into $(T_p(L(p)))^\perp$, is an isomorphism,
we obtain that
$$\so(4) = \so(3) \oplus \mathfrak m
$$
is a reductive decomposition of $\SO(4)/\SO(3)$ (the quotient space of
$M$ by the leaves of symmetry) and that 
$$
\mathfrak m.p = (T_p(L(p)))^\perp = (\so(3).p)^\perp.
$$ 
From Remark \ref{uniquereductive} we see that the above reductive
decomposition is naturally reductive (i.e., the canonical geodesics in
$S^3 = \SO(4)/\SO(3)$, associated to $\mathfrak m$, coincide with the
geodesics of the round sphere $S^3$)
and of one of the following forms:
\begin{enumerate}[(i)]
\item $\mathfrak m = \mathfrak m^\lambda$, where $\mathfrak m
  ^\lambda $ is the orthogonal complement of $\so (3)$ with respect to
  the (pseudo-Riemannian) inner product $(\,\,, \,)_\lambda = (B,
  \lambda B)$ of $\so(4) = \so(3) \oplus \so(3)$, $-B$ is 
  the Killing form of $\so(3)$ and $0 \neq \lambda \in \mathbb R$.
\item $\mathfrak m = \mathfrak m^0$, where $\mathfrak m^0 \simeq
  \so(3)$ is the Lie algebra of one of the factors of $\Spin(4)$ (and
  so $\mathfrak m^0$ is a Lie algebra). 
\end{enumerate}

We will now show that case (ii) cannot occur. Recall that, for
arbitrary Killing fields $\xi, \eta, X$, the Levi-Civita
connection is given by 
\begin{equation}\label{eq:case(ii)*}
  2\langle\nabla_\xi X, \eta\rangle = \langle[\xi, X], \eta\rangle +
  \langle[\xi, \eta], X\rangle + \langle[X, \eta], \xi\rangle  
\end{equation}
(see equation (3.4) of \cite{OlmosReggianiTamaru}). If $X \in \mathfrak p^p$ is a
transvection at $p = [e]$ and $\xi, \eta \in \mathfrak m^0$, then 
$0 = \langle[\xi, X], \eta\rangle_p + \langle[X, \eta], \xi\rangle_p$,
or equivalently, 
\begin{equation}\label{eq:case(ii)**}
  \langle[X, \xi], \eta\rangle_p = \langle[X, \eta], \xi\rangle_p .
\end{equation}
There exists $X \in \mathfrak p^p$ such that $[X, \mathfrak m^0] \neq
\{0\}$. Otherwise, $[\mathfrak p^p, \mathfrak m^0] = \{0\}$ and so
$[[\mathfrak p^p, \mathfrak p^p], \mathfrak m^0] = \{0\} $ and hence
$[\mathfrak g^p, \mathfrak m^0] = \{0\}$, which is  a contradiction (recall that
$\mathfrak g^p = \so(3)$, the Lie algebra of the standard $\SO(3)
\subset \SO(4)$, which is not an ideal of $\so(4)$). If we equip
$\so(4)$ with a bi-invariant (positive definite) metric, then $[X,
  \cdot\,]: \mathfrak m^0 \to \mathfrak m^0$ is skew-symmetric. Then
there exist linearly independent vectors $\xi, \eta \in \mathfrak m^0$ such 
that $[X, \xi] = \eta$ and $[X, \eta] = -\xi$. Inserting this into equation
(\ref{eq:case(ii)**}) leads to 
$\Vert\eta(p)\Vert^2 = -\Vert\xi(p)\Vert^2$,
which implies $\xi = 0 = \eta$ because, as previously observed, the
evaluation at $p$ is an isomorphism from $\mathfrak m^0$ onto
$(T_p(L(p))^p$. This is a contradiction and therefore case (ii) cannot occur.

We will now deal with case (i). For this we will use the construction 
given in \cite[Section 6]{OlmosReggianiTamaru}.

\underline{Case (a):} $\lambda > 0$, that is, the bi-invariant metric $(\cdot ,
\cdot )_\lambda = (B,
\lambda B)$ of $\so(4)$ is Riemannian. In the
notation of \cite{OlmosReggianiTamaru}, $G = \SO(4)$, $G' = \SO(3)$ and $K' = \SO(2)$
(and so $G\supset G' \supset K'$). Moreover, the general
assumptions in this reference are satisfied, i.e., $(\SO(4), \SO(3))$ and
$(\SO(3),
\SO(2))$ are irreducible symmetric pairs and $\SO(3)$ is a simple
(compact) Lie group. Let $\so(3) = \so(2) + \mathfrak p'$ be the
Cartan decomposition of $S^2 = \SO(3)/\SO(2)$. Since $\so(3)$ is
simple, the restriction of $(\cdot , \cdot)_\lambda$ to $\so(3)$ is a
multiple of the Killing form of $\so(3)$. So $\mathfrak p' \subset
\so(2)^\perp$ (the orthogonal complement in $\so(4)$ with respect to
$(\cdot , \cdot)_\lambda$), and thus
$$
\so(2)^\perp = \mathfrak m^\lambda \oplus \mathfrak p'.
$$ 

We will first define a Riemannian metric on $M = \SO(4)/\SO(2)$ such
that the canonical projection to the sphere $\SO(4)/\SO(3)$ is a Riemannian
submersion, with index of symmetry $2$ (and such that the orthogonal complement
to the subspace of symmetry is given by $\mathfrak m^\lambda \cdot
p$). Then we will deform this metric  to obtain all the
invariant metrics with index of symmetry $2$ and such that the
subspace which is orthogonal to the subspace of symmetry at $p = [e]$
is given by $\mathfrak m^\lambda. p$. 

Following \cite{OlmosReggianiTamaru}, we equip $T_p(\SO(4)/\SO(2)) \simeq
\so(2)^\perp = \mathfrak m^\lambda \oplus \mathfrak p'$ with the
positive definite inner product $\langle \cdot , \cdot \rangle_\lambda$ which is
defined by the following three properties: 
\begin{enumerate}[(i)]
\item $\langle\mathfrak m^\lambda, \mathfrak p'\rangle_\lambda = 0$;
\item the restrictions of both $(\cdot , \cdot)_\lambda$ and $\langle
\cdot , \cdot \rangle_\lambda$ to $\mathfrak m^\lambda$ coincide;  
\item $\langle \cdot , \cdot \rangle_\lambda = 2(\cdot , \cdot)_\lambda$
on $\mathfrak p' \times \mathfrak p'$. 
\end{enumerate}
We then equip $M = \SO(4)/\SO(2)$ with the $\SO(4)$-invariant metric, also
denoted by
$\langle\cdot , \cdot\rangle_\lambda$, which coincides at $p$ with the
above defined inner product. Then, by Lemma 6.2 in \cite{OlmosReggianiTamaru}, the
subspace of symmetry at $p$ is $\mathfrak p'.p$, unless $(M,
\langle\cdot , \cdot\rangle_\lambda)$ is symmetric (observe that $M$ is
simply connected). 

Since the fixed set of the isotropy representation of $\SO(2)$ on
$T_pM$ has dimension~$1$, it follows that the action of $\SO(2)$ on
$\mathfrak m^\lambda$ is non-trivial. Let $e_1, e_2, e_3$ be an
orthonormal basis 
of $\mathfrak m^\lambda \simeq \mathfrak m^\lambda.p$ with respect to
$\langle\cdot , \cdot\rangle_\lambda$. We may
assume, if $\mathbb R X_0 = \so(2)$, that $[X_0, e_1] = 0$, $[X_0,
  e_2] = e_3$ and $[X_0, e_3] = -e_2$. Observe that the isotropy group
$\SO(2)$ acts trivially on $\mathbb R e_1$ and irreducibly on the linear span
$\mathbb V$
of $e_2$ and $e_3$. Let $\langle\cdot , \cdot\rangle$
be an $\SO(4)$-invariant metric on $M = \SO(4)/\SO(2)$ such that $\mathfrak
m^\lambda.p$ is perpendicular to the subspace of symmetry
$\mathfrak p' .p = \so(3) . p$. Then, up to rescaling,
$\langle\cdot , \cdot\rangle$ has the following four properties:
\begin{enumerate}[(i)]
\item[(i)] $\langle\cdot , \cdot\rangle$ coincides with $\langle\cdot ,
\cdot\rangle_\lambda$ on $\mathfrak p' .p$;
\item[(ii)] $\langle e_1, \mathbb V\rangle = 0$;
\item[(iii)] $\langle e_1, e_1\rangle = s$ for some $s > 0$;
\item[(iv)] $\langle\cdot , \cdot\rangle = t\langle\cdot ,
\cdot\rangle_{\lambda}$ on
  $\mathbb V$ for some $t > 0$.
\end{enumerate}

We will now prove that $s + t = 2$.
Let $X \in \mathfrak p'$. Then $\SO(3) \cdot p$ is a totally geodesic
submanifold of $(M,\langle\cdot , \cdot\rangle)$
and $X_{\vert \SO(3) \cdot p}$ is an intrinsic transvection of $\SO(3)
\cdot p$ at $p$.
From equation (\ref{eq:case(ii)*}) we know that $X$ is a transvection at $p$
if and only if
\begin{equation}\label{eq:case(ii)***}
  \langle[\xi, X], \eta\rangle_p + \langle[\xi, \eta], X\rangle_p +
  \langle[X, \eta], \xi\rangle_p = 0
\end{equation}
holds for all $\xi, \eta \in \mathfrak m^\lambda$.
First of all, note that the orthogonal projection of $[e_2, e_3]$ onto $\so(3)$
 is a multiple of $X_0$. In fact, $[X_0, [e_2, e_3]] = [[X_0, e_2],
  e_3] + [e_2, [X_0, e_3]] = 0$. Now decompose $[e_2, e_3] = Z + \psi $ with $Z
\in \so(3)$ and $\psi \in \mathfrak m^\lambda$. Then $[X_0,
  Z] = 0$ and hence $Z = aX_0$, since $\so(3)$ has rank one (and so $Z.p = 0$). 
Next, we have
\begin{equation}\label{eq:formula-A}
  \begin{split} 
    2\langle\nabla_{e_1}X, e_2\rangle & = \langle[e_1, X],
    e_2\rangle_p + \langle[e_1, e_2 ], X\rangle_p + \langle[X, e_2],
    e_1\rangle_p  \\
     & = t\langle[e_1, X], e_2\rangle_{\lambda \vert p} + \langle[e_1,
      e_2], X\rangle_{\lambda \vert p} + s\langle[X, e_2],
    e_1\rangle_{\lambda \vert p}  .
  \end{split}
\end{equation}
The projection 
$\pi: (M, \langle\cdot , \cdot\rangle_{\lambda}) \to \SO(4)/\SO(3) = S^3$
is a Riemannian submersion, up to a rescaling of the metric. 
We denote by $\nabla^\lambda$ the Levi Civita connection of $M$ with respect to
$\langle \cdot , \cdot \rangle_\lambda$.
Since
$e_1$ and $e_2$ are projectable vector fields, which are horizontal along
$\SO(3) \cdot p$, we obtain
\[
  0  = (X\langle e_1, e_2\rangle_{\lambda})_p = \langle\nabla_X^\lambda
  e_1, e_2\rangle_{\lambda \vert p} + \langle e_1, \nabla_X^\lambda
  e_2\rangle_{\lambda \vert p} 
   = \langle[X, e_1], e_2\rangle_{\lambda \vert p} + \langle e_1, [X,
    e_2] \rangle _{\lambda \vert p},  
\]
because of  $[X, e_i]_p = (\nabla^\lambda_Xe_i)_p$ and since
$(\nabla^\lambda_{e_i}X)_p = 0$. Inserting this into
equation (\ref{eq:formula-A}) yields
\begin{equation}\label{eq:formula-B}
  2\langle\nabla_{e_1}X, e_2\rangle = (t + s)\langle[e_1, X],
  e_2\rangle_{\lambda \vert p} + \langle[e_1, e_2 ], X\rangle_{\lambda
    \vert p} .
\end{equation}
If $s = t = 1$ we have $\langle\nabla_{e_1}X, e_2\rangle = 0$ since $X$ is
parallel at $p$ because of $\langle \cdot , \cdot \rangle = \langle \cdot ,
\cdot \rangle_\lambda$ in this case. From equation (\ref{eq:formula-B}) we then
get $2\langle[e_1, X],e_2\rangle_{\lambda \vert p} = - \langle[e_1, e_2 ],
X\rangle_{\lambda \vert p}$ in this case.
We have that $[\mathfrak m^\lambda, \mathfrak m^\lambda]^{\so(3)} =
\so(3)$, where $(\,\,)^{\so(3)}$ denotes the projection onto
$\so(3)$. In fact, this projection is not trivial, since $\mathfrak
m^\lambda$ is not a Lie algebra and
$\Ad(\SO(3))$-invariant. Recall, as we have shown, that $[e_2,
  e_3]^{\so(3)}\subset \so(2)$. Then $[e_1, e_2]$ 
projects non-trivially into $\mathfrak p'$. 
If $X$ would be parallel at $p$, for any $X$ in $\mathfrak p ' $, then we would
also have that 
$(s+t) \langle[e_1, X],e_2\rangle_{\lambda \vert p} = - \langle[e_1, e_2 ],
X\rangle_{\lambda \vert p}$
for any $X\in \mathfrak p '$, which implies that $ - \langle[e_1, e_2 ],
X\rangle_{\lambda \vert p} = 0$.
In particular, for $X$ equal to the projection to $\mathfrak p '$ of 
$[e_1 , e_2]$, this gives a contradiction. 
This implies that $X$ is a
transvection of $(M,\langle\cdot , \cdot\rangle)$ at $p$ if and only if $t
= 2 - s$, $0 < s < 2$. 

We denote this metric by $\langle\cdot , \cdot\rangle_{(\lambda, s)}$ with $0 <
\lambda$ and $0 < s < 2$. If we replace $\lambda$ by
$1/\lambda$  the metrics are homothetical, so we may assume that
$0 < \lambda \leq 1$ (see Remark \ref{lambda}).

\underline{ Case (b):} $\lambda < 0$, that is, $(\cdot , \cdot)_\lambda = (B,
\lambda
B)$ is a pseudo-Riemannian bi-invariant metric on $\so(4)$. By making
the same construction as in Case (a), eventually by changing the sign
of the metric, we obtain a pseudo-Riemannian metric $\langle\cdot ,
\cdot\rangle_\lambda$ on $M$ such that it is positive definite on $\so(3)
.p$ and negative definite on its orthogonal complement $\mathfrak
m^\lambda.p$. Moreover, if $X \in \mathfrak p' .p$, then
$(\nabla^\lambda X)_p = 0$. As in Case (a), such a metric can only be deformed
when rescaling by $s$ on $\mathbb R e_1$ and by
$2 - s$ on  $\mathbb V$ (in order that $X$ is a transvection at
$p$). But $s$ and $2 - s$ cannot be both negative in order for the
metric $\langle\cdot , \cdot\rangle_{(\lambda ,s)}$ to be Riemannian. So this
case can be excluded.  

We conclude that, if the index of symmetry of $\SO(4)/\SO(2)$ is $2$, then the
Riemannian metric has to be of the form $\langle\cdot , \cdot\rangle_{(\lambda
,s)}$ with $0 <\lambda$ , $0 < s < 2$.

Conversely, such metrics have index of symmetry $2$, unless the space
is globally symmetric. In fact, the distribution of symmetry on
$\SO(4)/\SO(2)$ descends to a $\SO(4)$-invariant (and therefore parallel)
distribution on the irreducible symmetric space $S^3 =
\SO(4)/\SO(3)$. Such a distribution must be trivial, and if the rank is zero the
index of symmetry of $\SO(4)/\SO(2)$ is $2$, and if the rank is maximal then
$\SO(4)/\SO(2)$ has index of symmetry $5$ and so it is a symmetric
space. 

\begin{rem}\label{lambda}
  Let us consider the bi-invariant inner product $(B,\lambda B)$, $ \lambda >0 $
  on $\so(4) = \so(3) \oplus \so(3)$, where $-B$ is the Killing form
  of $\so(3)$. The involution $\tau$ of $\Spin(4) =
  \Spin(3) \times \Spin(3)$, that permutes the factors, maps
  both $\diag(\SO(3))$  and $\diag(\SO(2))$ into itself. So
  $\tau$ induces an isomorphism $\bar\tau$ of $M =
  \Spin(4)/\diag(\Spin(2))$ into itself. The map $\bar\tau$ is an
  isometry from $(M, \langle\,\,, \,\rangle)$ into $(M, \langle\,\,,
  \,\rangle')$, where $\langle\,\,, \,\rangle$ is the normal
  homogeneous metric with respect to $(B, \lambda B)$ and
  $\langle\,\,, \,\rangle'$ is the normal homogeneous metric with
  respect to $(\lambda B, B)$. The same is true if we rescale the
  metrics by a factor $2$, as in our construction, on the tangent
  space of $\diag(\Spin(3))/\diag(\Spin(2))$ at $[e]$. Now observe
  that the normal homogeneous metric on $M$ with respect to $(\lambda
  B, B)$, or that modified as before, is homothetical to the normal
  homogeneous metric induced by $(B, \frac{1}{\lambda} B)$. 
\end {rem}
 
\begin{rem}\label{25-10} 
  A compact, simply connected, Riemannian symmetric space of dimension $5$ is isometric to
  one of the following spaces: $S^2 \times S^3$, $S^5$ or
  $\SU(3)/\SO(3)$. The last space is irreducible and of rank $2$.

 The homogeneous space $\SO(4)/\SO(2)$ is not homeomorphic to $S^5$. In
    fact, from the long exact homotopy sequence of the
    fibration $\SO(2) \to \SO(4) \to \SO(4)/\SO(2)$
    it follows that $\pi_3(\SO(4)/\SO(2)) = \mathbb Z \oplus \mathbb
    Z \neq \pi_3(S^3)$. 

The space $M^5 = \SO(4)/\SO(2)$, with any $\SO(4)$-invariant metric, can
    never be isometric to an irreducible symmetric space of higher
    rank. In fact, if $p = [e]$, the isotropy representation of
    $\SO(2)$ on $T_pM$ is the direct sum of two copies of the standard
    representation of $\SO(2)$ on $\mathbb R^2$, plus a trivial one-dimensional
representation. If $\phi \in \SO(2)$ is the
    rotation of angle $\pi$ (with the standard representation), then
    $\phi$ represents an element of the isotropy group of $M$ which has the
    eigenvalue $-1$ with multiplicity $4$ and the eigenvalue $1$ with
multiplicity $1$. If
    $M$ is a symmetric space, then the decomposition of $\phi$ with respect to
the
    symmetry $\sigma$ at $p$, via the isotropy representation, has the
    eigenvalue $1$ with multiplicity $4$ and the eigenvalue $-1$ with
multiplicity $1$. Then
    the connected component containing $p$ of the fixed set of $\sigma \circ
    \phi$ would be a totally geodesic hypersurface $N$ of $M$. Let
    $K'$ be the full connected isotropy group of $N$ at $p$. We may regard
    $K' \subset K$, where $K$ is the full connected isotropy group of the
    symmetric space $M$. Observe that $K'$, via the isotropy
    representation, acts trivially on the one-dimensional normal space
    $\nu_p(N) \simeq \mathbb R$ of $N$ at $p$. Let $\bar R$ be the
    direct product of $R'$ and the zero tensor on $\nu_p(N)$, where
    $R'$ is the curvature tensor of $N$ at $p$. Then $\bar R_{x, y}
    \in \mathfrak k$ and so, by Simons' Theorem \cite{Olmos,Simons}, if $M$ is of rank
    at least $2$, $\bar R$ must be a scalar multiple of $R$, the
    curvature tensor of $M$ at $p$. This is a contradiction if $M$ is an
    irreducible symmetric space. Thus $M$ cannot be isometric to the
    irreducible rank $2$ symmetric space $\SU(3)/\SO(3)$. 

Note that $\SO(4)/\SO(2)$ is diffeomorphic to $S^2 \times
    S^3$, since the first space is diffeormorphic to the unit tangent
    bundle of the (parallelizable) sphere~$S^3$. 
\end {rem}

\begin{exa}\label{productofspheres}
 ({\bf Product of spheres})
We denote by $S^2$ the sphere of dimension $2$ and radius $\rho$ and by
$S^3 $ the sphere of dimension $3$ and radius $1$, and put
$M = S^2 \times S^3$. Observe that any product of a round $2$-sphere and a
round $3$-sphere is homothetic to $M$ with a suitable $\rho$. 

The group $\Spin(4) = \Spin(3) \times \Spin(3)$ acts transitively by
isometries on $M = S^2 \times S^3 \simeq S^2 \times\Spin (3)$ in the
following way: 
$$
(g, h) ((q, k) )= (\pi(g)(q), gkh^{-1}),
$$ 
where $(g, h) \in \Spin(3) \times \Spin (3)$, $q \in S^2$, $k \in
\Spin(3) \simeq S^3$, and $\pi$ is the canonical projection from $\Spin(3)$ onto
$\SO(3)$. The isotropy group at $p = (\rho e_1, e) \in S^2 \times \Spin(3)$
is $\diag(\SO(2)) \subset \Spin(3) \times \Spin(3)$. After making this
action effective, one obtains that $\SO(4)$ acts transitively on $M$
and the isotropy group is conjugate to $\SO (2)$, where $\SO(2) \subset
\SO(4)$ is the standard inclusion.
Recall that for $\so(n)$ the Killing form $-B$ is given by
$$
B(X,Y) = -(n - 2)\trace(X\circ Y).
$$
For $n = 3$ the Killing form coincides with the negative of the usual inner
product of matrices.

Let $p = (\rho e_1, e) \in M = S^2 \times \Spin(3)$, where $e_1 = (1,
0, 0)$. The parallel Killing fields at the identity $e$ of $\Spin(3) =
S^3$ are the elements of $\so(3) \times \so(3)$ of the form $Z = (X,
-X)$ (regarded as a Killing field on $\Spin(3)$). The parallel Killing
fields on $S^2$ at $\rho e_1$ are elements in the Cartan subspace 
$$
\mathfrak p = \left\{\left(
\begin{smallmatrix} 
  0 & a & b \\
  -a & 0 & 0 \\ 
  -b & 0 & 0 
\end{smallmatrix}\right): a,b \in \mathbb R\right\}
$$ 
associated with the symmetric pair $(\SO(3), \SO(2))$.
Therefore an element $Z \in \so(3) \times \so(3)$ is parallel at $(\rho
e_1, e)$ if and only if $Z = (Y, -Y)$ with $Y \in \mathfrak
p$. Observe that the subspace $\mathfrak p^{(\rho e_1, e)} = \{(Y,
-Y): Y \in \mathfrak p\}$ of parallel Killing fields at $(\rho e_1, e)
\in S^2 \times \Spin(3)$ belonging to $\so(4) = \so(3) \oplus
\so(3)$ has dimension $2$. We use here the general notation of the
paper, but take into account that the Cartan subspace is relative to
the presentation group (i.e., the parallel Killings field at a given
point that lie in the Lie algebra $\so(3) \times \so(3)$).  
The (relative) Cartan subspace is given by $\mathfrak p^{(\rho e_1,
  e)}$, which spans the involutive Lie algebra
$$
\mathfrak g^{(\rho e_1, e)} = \diag(\so(2)) \oplus \mathfrak p^{(\rho
  e_1, e)}, 
$$
where $\so(2) = \{u \in \so(3): u \cdot e_1 = 0\}$.

Up to homothety, $S^2 \times S^3$ must carry a
metric $\langle\cdot , \cdot\rangle_{(\lambda , s)}$ as described
above (recall that $\rho$ is the radius of $S^2$ and $1$ is the
radius of $S^3$). 
We will now determine $\lambda$. Observe that $G^{(\rho e_1, e)}$, the group
which is generated by the transvections at $(\rho e_1, e)$, is not the
canonical $\diag(\Spin(3)) \subset \Spin(3) \times \Spin(3)$ (but it
must be conjugate to it). So the reductive complement, associated to
the Killing fields in $\so(3) \times \so(3)$ that are always
perpendicular to $L((\rho e_1, e)) = G^{(\rho e_1, e)} \cdot (\rho
e_1, e)$, is conjugate to $\mathfrak m_\lambda = \{(Z,
-\frac{1}{\lambda}Z): Z \in \so(3)\}$. 

We will find $h \in \Spin(3)$ such that $G^{(\rho e_1, h)} =
\diag(\Spin(3))$. In order to simplify the calculations, we will use the
quaternions. Identify $\Spin(3)$ with the unit sphere of the
quaternionic space $\mathbb H = \{a + ib + cj + dk: a, b, c, d \in
\mathbb R\}$, $i^2 = j^2 = k^2 = -1$, $ij = -ji = k$, $jk = -kj = i$,
$ki = -ik = j$. Let  $\pi: \Spin(3) \to \SO(3)$ be the canonical
projection. By identifying $\mathbb R^3$ with the purely imaginary quaternions $\Im(\mathbb H) = \{q \in H: \bar q = -q\} $ we obtain 
$$
\pi(g)(x) = gxg^{-1} = gx \bar g.
$$ 
The Lie algebra $\so(3)$ of $\Spin (3)$ is identified with
$\Im(\mathbb H)$ with the bracket $[x, y] = xy - yx$. Observe that,
with these identifications, $i = e_1$, $1 = e$.  
The exponential map is given by $\Exp(x) = \cos(\Vert x\Vert ) +
\sin(\Vert x\Vert )\frac{1}{\Vert x\Vert} x$. If $x \in \Im(\mathbb
H)$, then $\frac{d}{dt}_{\vert t=0} \pi(\Exp(tx)(z) = xz - zx$. So $x$
defines the Killing field of $\Im(\mathbb H)$ given by $z \mapsto x
.z = xz - zx$.  
Observe that 
$$
\so (2) = \{U \in \so(3): U.e_1 = 0\} = \{w \in \Im(\mathbb H):
wi - iw = 0\} = \mathbb R i.
$$ 
With these identifications the (relative) Cartan subspace $\mathfrak p$ is given
by 
the linear span of $j$ and $k$.
It is not hard to see that 
$(1, -i)G^{(\rho i, 1)}(1, -i)^{-1} = \diag(\Spin(3))$
and thus
$$
G^{(\rho i, i)} = G^{(1, -i) \cdot (\rho i, 1)} = (1, -i)G^{(\rho i,
  1)}(1, -i)^{-1} = \diag(\Spin(3)).
$$
Moreover, $\mathfrak k^{(\rho i, i)} = \mathbb R i$ and
$$
\mathfrak p^{(\rho e_1, i)} = \diag(\mathfrak p) = \{(Y, Y): Y \in
\mathfrak p\} = \{(v, v): v \in \text{linear span of } \{j,k\}\}.
$$ 
If $v \in \mathfrak p$, then 
$$
(v, v).(\rho i, i) = (v .\rho i, v. i) = (\rho(vi
- iv), vi - vi) = (2\rho vi, 2vi).
$$
Observe that $vi \in \mathfrak p$ and therefore 
$$
\mathfrak s^{(\rho i, i)} = \mathfrak p^{(\rho i, i)} . (\rho i, i) =  
\{(\rho v, v): v \in \mathfrak p\}.
$$ 
This subspace must be perpendicular to $\mathfrak m^\lambda .
(\rho i, i)$, where
$$
\mathfrak m^\lambda = \{(Z, -\tfrac{1}{\lambda}Z): Z \in \so(3) =
\Im(\mathbb H)\}.
$$
Take $Z = k, Y = j \in \mathfrak p$. Then $(k, -\frac{1}{\lambda}k)
. (\rho i, i) = (2\rho j, (1 - \frac{1}{\lambda})j)$. This must be
perpendicular to $(\rho j, j)$. Then $2\rho^2 = \frac{1}{\lambda} - 1$
and therefore 
$$
\lambda = \frac {1}{1 + 2\rho ^2}.
$$
 
The fixed vectors in $\mathfrak m^{\frac{1}{1 + 2\rho^2}}$ are
$\mathbb R(i, -(1 + 2\rho^2)i) \in \so(3) \oplus \so(3)$. Let us
compare the metric on the product of spheres with the one given by the
bi-invariant inner product $(B, \frac{1}{1 + 2\rho^2}B)$.  
The  norm of $(i, -(1 + 2\rho^2)i)$ with the given metric is 
\begin{align*}
  \Vert(i, -(1 + 2\rho^2)i) . (\rho i, i)\Vert^2  & = \Vert([i, \rho
    i], ii + i(1 + 2\rho^2)i)\Vert^2 \\
   & = \Vert (0, -2(1 + \rho^2)\Vert^2 
   = 4(1 + \rho^2)^2,
\end{align*}
and the norm, using $(B, \frac{1}{1 + 2\rho^2} B)$, is 
\begin{align*}
  \Vert(i, -(1 + 2\rho^2)i)\Vert^2 & = B(i, i) + \frac{1}{(1 +
    2\rho^2)}B(-(1 + 2\rho^2)i, -(1 + 2\rho^2)i) \\
  & = (8 + 8(1 + 2\rho^2)) =
   16(1 + \rho^2),
\end{align*}
since $B(i, i) = 8$. So the quotient is 
$s' = \frac{1}{4}(1 + \rho^2)$.

Let us choose the element $(j, -(1 + 2\rho^2)j) \in \mathfrak m^{\frac{1}
{(1 + 2\rho^2)}}$ that is perpendicular to the fixed vectors $\mathbb
R(i, -(1 + 2\rho^2)i)$ of the isotropy group. The norm with the given metric
is 
\begin{align*}
  \Vert(j, -(1 + 2\rho^2)j) . (\rho i, i)\Vert^2 & = \Vert([j,
    \rho i], ji + i(1 + 2\rho^2)j)\Vert^2 \\ 
  & = \Vert -2\rho k, 2\rho^2k)\Vert^2 \\
  & = 4\rho^2 + 4\rho^4 = 4\rho^2 (1 + \rho ^2),
\end{align*}
and the norm using $(B,\frac{1}{(1 + 2\rho^2)}B)$ gives, as before,
$$
\Vert(j, -(1 + 2\rho^2)j)\Vert^2 = 16(1 + \rho^2).
$$
The quotient is $t' = \frac{1}{4}\rho^2$.

We have $s' + t'\neq 2$ because we need to rescale the metric in line with our
classification. So, define $s = \frac{2s'}{s' + t'}$, and the metric
$\langle\cdot,\cdot\rangle_{(\frac{1}{1 + 2\rho^2}, s)}$ is the metric in the
family. 
An explicit calculation gives
$$
s = 2\frac{1 + \rho^2}{1 +2\rho^2} \ {\rm and}\ t = 2\frac{\rho^2}{1
  +2\rho^2}.
$$
For instance, if $\rho = 1$, then $\lambda = \frac{1}{3}$, $s =
\frac{4}{3}$ and $t = \frac{2}{3}$.
\end{exa}

\begin{rem}\label{lambda + 1}
  Recall that, in the above examples of products of spheres, $\lambda =
  \frac{1}{1 + 2\rho^2}$ and $s = 2\frac{1 + \rho^2}{1 +
    2\rho^2}$. Then $s = \lambda + 1$. Therefore the  family of
  examples of products of spheres as previously discussed 
  corresponds to the family of metrics
  $\langle\cdot , \cdot\rangle_{(\lambda, \lambda + 1)}$, where $ 0 <
  \lambda <1$ (and the quotient of the radius of the $2$-sphere by the
  radius of the $3$-sphere is given by $\rho = \sqrt{\frac{1 -
      \lambda}{2\lambda}}$).
  In particular, the reductive complement is never the standard one,
  i.e., $\lambda \neq 1$. Observe also that $0 < t < s < 2$ (recall
  that $s + t = 2$). Then the metric does not project down, as a
  Riemannian submersion, to the quotient $\SO(4)/\SO(3)$ of $M$ by the
  leaves of symmetry (relative to $\SO(4)$).  
\end {rem} 

\begin{rem}\label{otra}
  Any transitive action of $\Spin(3) \times \Spin(3)$ on $S^2 \times
  S^3 \simeq S^2 \times \Spin(3)$ is equivalent to the previously
  described action or to the  action given by
  $$
  (g, h) ((u, d) )= (\pi(g)(u), h (d)).
  $$ 
  However, the isotropy group of the latter action is $\SO(2) \times \{e\}$ and
  fixes the $3$-dimensional space $T_d(\Spin(3))$. So this
  homogeneous space is not (equivariantly) isomorphic to the canonical
  $\SO(4)/\SO(2)$.
\end {rem}

We can now state the main result of this section.

\begin{thm}\label{classification}
  Let $M$ be an $n$-dimensional, simply connected, compact,  irreducible
  Riemannian homogeneous manifold and $n > 3$. Then the co-index of symmetry of
  $M$ is equal to $3$ if and only if $M$ is homothetic to $M =
  \SO(4)/\SO(2)$ with a metric of the family 
$\langle\cdot , \cdot\rangle_{(\lambda, s)}$, where $0 < \lambda \leq 1$,
  $0 < s < 2$ and $s \neq \lambda +1$. (If $s = \lambda + 1$, then, up to
  homothety, $M$ is a product of spheres $S^2_{\rho} \times S^3$ with
  $\rho = \sqrt{\frac{1 - \lambda}{2\lambda}}$.)
\end {thm}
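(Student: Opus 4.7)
The plan is to assemble the results already established in Section~6 into the classification statement. First I would recall that by Theorem~\ref{mainco} applied to $k = ci_{\mathfrak s}(M) = 3$, there exists a transitive semisimple normal subgroup $G'$ of $I(M)$ with $2\dim(G') \leq 12$, and since $M$ is irreducible with $n > 3$ it is not symmetric, so $G' \neq \Spin(3)$; hence $G' = \Spin(4)$. The last clause of Theorem~\ref{mainco} forces the isotropy dimension $d$ to be positive, so $d \in \{1,2\}$. The case $d = 2$ produces $S^2 \times S^2$ (with the isotropy being a standard torus), contradicting irreducibility. So $d = 1$, and Lemma~\ref{normal position} identifies $M$ with $\SO(4)/\SO(2)$ with $\SO(2)$ in standard (diagonal) position.

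Next I would use the analysis of reductive complements: the subspace $\mathfrak m \subset \so(4)$ of Killing fields whose restriction to $L(p) = \SO(3)/\SO(2)$ is perpendicular to $L(p)$ gives, via Remark~\ref{uniquereductive}, one of the two forms $\mathfrak m^\lambda$ ($\lambda \neq 0, -1$) or $\mathfrak m^0$. The argument of the Section eliminates $\mathfrak m^0$ by a direct skew-symmetry computation using equation~(\ref{eq:case(ii)**}), and eliminates $\lambda < 0$ because after the required rescaling by $s$ and $2-s$ the metric cannot remain Riemannian. This leaves, after rescaling to make $X \in \mathfrak p'$ a transvection at $p$, the two-parameter family $\langle\cdot,\cdot\rangle_{(\lambda,s)}$ with $0 < \lambda$, $0 < s < 2$, and after replacing $\lambda$ by $1/\lambda$ when needed (Remark~\ref{lambda}) we may restrict to $0 < \lambda \leq 1$.

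For the converse I would verify that every such metric has coindex of symmetry equal to $3$ unless it is globally symmetric: the distribution of symmetry descends to an $\SO(4)$-invariant, hence parallel, distribution on $S^3 = \SO(4)/\SO(3)$, which is either trivial (giving $ci_{\mathfrak s}(M) = 3$) or full (giving the symmetric case, which is excluded by the construction of $s$ as a free parameter). The final task is to decide which metrics in the family are irreducible. By Example~\ref{productofspheres} and Remark~\ref{lambda + 1}, the locus $s = \lambda + 1$ corresponds exactly to products $S^2_\rho \times S^3$ with $\rho = \sqrt{(1-\lambda)/(2\lambda)}$, which are reducible; so irreducibility forces $s \neq \lambda+1$. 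Conversely, for $s \neq \lambda + 1$ I would argue that $M$ cannot split: an $\SO(4)$-invariant splitting would have to be compatible with the isotropy representation (which decomposes $T_pM$ as $\mathfrak s_p \oplus \mathbb V \oplus \mathbb L$), and Remark~\ref{25-10} rules out the only candidate irreducible symmetric factor of the right dimension; the only reducible option is the product of spheres, which is precisely the excluded locus.

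The main obstacle is the converse: showing that every metric in the family with $s \neq \lambda + 1$ is actually irreducible, and hence distinct from the product-of-spheres locus. The parametrization makes this a question of when the distribution of symmetry has an orthogonal invariant complement that is also autoparallel; this reduces to checking, via equation~(\ref{eq:case(ii)*}) and the reductive decomposition, that the second fundamental form of the perpendicular distribution vanishes exactly when $s = \lambda + 1$, matching the explicit computation in Example~\ref{productofspheres}.
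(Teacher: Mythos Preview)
Your assembly of the biconditional is essentially correct and tracks the discussion of Section~6 leading up to the theorem. However, you have entirely omitted what the paper takes to be the substance of the formal proof. The paper opens with ``It only remains to prove that different pairs $(\lambda,s)$ correspond to non-homothetical metrics,'' and the proof is devoted exclusively to this injectivity. Without it the family $\langle\cdot,\cdot\rangle_{(\lambda,s)}$ is only a surjective description, not a classification; the restriction $0<\lambda\le 1$ (from Remark~\ref{lambda}) would be unmotivated, and one could not assert that the moduli space is genuinely two-dimensional. So the ``main obstacle'' is not irreducibility but distinguishing the metrics.

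The paper's injectivity argument proceeds in two steps. First, for $s\neq\lambda+1$ one establishes $I^o(M)=\SO(4)$: any larger connected isometry group would descend to the quotient sphere $\SO(4)/\SO(3)$ by the leaves of symmetry, where $\SO(4)$ is already the full connected isometry group, so there would be a nontrivial normal subgroup $H$ acting trivially on the quotient; but $H$ would then preserve each leaf $L([e])\simeq S^2$ and commute with its transvection group $\SO(3)$, a contradiction. Second, a homothety between $\langle\cdot,\cdot\rangle_{(\lambda,s)}$ and $\langle\cdot,\cdot\rangle_{(\lambda',s')}$ induces an automorphism $\rho$ of $\so(4)$ carrying $\diag(\so(3))$ to itself and $\mathfrak m^\lambda$ to $\mathfrak m^{\lambda'}$; decomposing $(u,0)\in\so(3)\oplus\so(3)$ relative to $\diag(\so(3))\oplus\mathfrak m^\lambda$ and using that $\rho$ preserves the Killing form forces $\lambda=\lambda'$ (the normalization $0<\lambda,\lambda'\le 1$ enters here). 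The homothety is then an isometry, since the intrinsic curvature of the leaf of symmetry depends only on $\lambda$; finally $s=s'$ follows by comparing the lengths of the closed geodesics generated by the isotropy-fixed direction in $\mathfrak m^\lambda$. Your proposed attack on irreducibility via the second fundamental form of $\mathfrak s^\perp$ is also more laborious than what the paper needs: once $I^o(M)=\SO(4)$ is known, irreducibility is essentially immediate.
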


\begin {proof}
It only remains to prove that different pairs 
$(\lambda , s)$ correspond to non-homo\-thetical metrics.
First of all, we note that $\SO (4)$ is the (connected) full isometry group of 
$ M = \Spin (4) / \text {diag} (\SO (2))= \SO (4)/\SO(2)$ 
with any of the metrics
of the family $\langle\cdot , \cdot\rangle_{(\lambda, s)}$. 
(Note that $M$ is not symmetric.) Otherwise, by Remark \ref {25-10}, 
it would be a product of spheres. But such a product of spheres 
corresponds to $s=\lambda + 1$ (see Remark \ref {otra}).
So, by the paragraph before Remark \ref  {lambda}, the index of symmetry 
of $M$ is $2$.
 So, in the $3$-dimensional 
quotient $N$ of $M$ by the leaves of symmetry, the group $SO(4)$ acts by 
isometries (with the normal homogeneous metric). Then, up to a cover, 
$N$ is a sphere 
and hence $\SO (4)$ must be the full (connected) isometry group of $N$. 
Therefore,  if the  isometry group $I^o (M)$ of $M$ is bigger than 
$\SO (4)$, then $I^o (M)$ has a  proper (connected) normal 
subgroup $H$ acting trivially on $N$. If $L([e])
= \SO (3)/\SO (2)\simeq S^2$ is the leaf of symmetry at 
$[e]$, then $H\cdot L([e])= L([e])$ and $H$ commutes with 
$\SO (3)$, which is a contradiction. Hence we must have
$I^o (M) = \SO (4)$. 

Let us assume that the pairs $(\lambda , s)$ and $(\lambda ' , s')$ 
correspond to homothetical metrics (and the pairs do not correspond to 
the exceptions that are product of spheres). 
Assume that $\lambda \neq \lambda '$, say
  $\lambda < \lambda '$.
If $h$ is the homothety  
between the metrics, then it induces a Lie algebra isomorphism
$\rho = h _*$
of $\so (4)$  (the Lie algebra of the full isometry groups) 
 that maps 
$\text {diag} (\so (3))$ into itself (since it corresponds to the 
group of transvections at $[e]$) and  $\rho$ maps  
$\text {diag} (\SO (2))$ into itself (the Lie algebras of the isotropy at
$[e]$).
Moreover,   $\rho ( \mathfrak m ^\lambda)  =
\mathfrak m ^{\lambda '} $. 
In fact, these subspaces are given by the geometry 
as the Killing fields 
which are always perpendicular to the leaves  of 
symmetry $\SO (3)/\SO (2) = 
\text {diag} (\SO (3))/\text {diag} (\SO (2))$, with the respective 
metrics. 
 Observe that $\rho$ must preserve 
$(B,B)$, where $-B$ is the Killing form of $\so (3)$.
Let $(u, 0)\in \so (3)\oplus \so (3) = \so (4)$.
Then $$(u,0) = \frac 1 {1 + \lambda } (u,u) +
\frac \lambda {1 + \lambda}(u, -\frac 1 \lambda u),$$
which gives the decomposition of $(u,0)$ in terms of 
the direct sum
$$\so (3) \oplus \so (3) = \text {diag}(\so (3)) \oplus 
\mathfrak m ^{\lambda}.$$
Then the projection to $\text {diag}(\so (3))$
is given by 
$$\pi ^\lambda ((u,0)) = \frac 1 {1 + \lambda } (u,u).$$
We  also have that 
$$(0,v) = \frac \lambda {1+\lambda}(v,v) -
\frac \lambda {1+\lambda}(v, -\frac 1 \lambda v)$$
and so 
$$\pi ^\lambda ((0,v)) = \frac \lambda {1 + \lambda}
(v,v).$$
Since $\rho (\text {diag}(\so (3))) = \text {diag}(\so (3))$ 
and 
$\rho (\mathfrak m ^{\lambda})= \mathfrak m ^{\lambda '}$, we obtain 
that 
$$\rho \circ \pi ^\lambda = \pi ^{\lambda '}.$$
Since $\rho :\so (3)\oplus \so (3) \to \so (3)\oplus \so (3)$ 
is a Lie algebra isomorphism, $\rho ((u,0))$ is either of the 
form $(u',0)$ or $(0,u')$. Moreover, since $\rho$ preserves the 
Killing form,  $B(u,u) = B(u',u')$. 
Also ,
\begin{eqnarray*}
B ( \pi ^{\lambda '} (\rho ((u,0))), \pi ^{\lambda '}(\rho ((u,0))))
 & = & B(\rho (\pi ^\lambda ((u,0))), \rho (\pi ^\lambda ((u,0)))) \\
&= & B(\pi ^\lambda ((u,0)), \pi ^\lambda ((u,0))) .
\end{eqnarray*}
Let us choose $u\neq 0$. If $\rho ((u,0)) 
= (u', 0)$ we have, from  the above equality,  that 
$$\frac 1 {1+\lambda ' }B(u',u') = 
 \frac 1 {1+\lambda  }B(u,u),$$ 
and so $ 1+\lambda '  = 1+\lambda  $. 
This is a contradiction to $\lambda \neq \lambda  '$. 
If $\rho ((u,0)) = (0,u')$, then the previous equality implies 
$\frac {\lambda '}{ 1+\lambda '} = 
\frac {1}{ 1 + \lambda}$,
which gives also a contradiction, since $0<\lambda <\lambda ' \leq  1$. 
It follows that  $\lambda = \lambda '$.

Since the curvature of the leaf of symmetry $\SO(3)/\SO(2)$ 
of $\SO (4)/\SO (2)$  with respect to the metric 
$\langle \cdot , \cdot \rangle_ {(\lambda , t)}$ depends only 
on $\lambda$ (and $B$), and since the homothety $h$ maps leaves of symmetry onto
leaves of symmetry, we see
that the homothety must be an isometry. 
We choose $v$ in $\mathfrak m ^{\lambda}$ of unit length and 
fixed by the isotropy group. 
Then the length of the closed geodesic $\gamma _v (t)$ determined by $v$ is
equal to
$as$, where $a$ is a constant. Since $h$ maps $\mathfrak m ^{\lambda}$
onto $\mathfrak m ^{\lambda  '}$ and fixed vectors of the isotropy group
onto fixed vectors of the isotropy group, $h (\gamma _v (t)) = 
\gamma _{v'}(t)$, where $dh (v) = v' $. Since the second geodesic 
has length $as'$, then $s=s'$. 
\end {proof}

\section {Classification for co-index of symmetry equal to 2}

The main result of this section is the following classification:

\begin {thm}\label {dos} 
Let $M$ be an $n$-dimensional ($n > 2$), simply connected, compact, 
 irreducible Riemannian
homogeneous manifold with co-index of  symmetry $k = 2$. Then 
$M = \Spin(3)$ with a left-invariant Riemannian metric that
belongs to one of the two families 
$\langle \cdot , \cdot \rangle _{s}$ ($0<s< 1$) and 
$ \langle \cdot , \cdot \rangle^{t}$ ($0<t\neq 2$) which 
are described below.   
None of these metrics are pairwise homothetic. The second family of metrics
corresponds 
to Berger sphere metrics.  
\end {thm}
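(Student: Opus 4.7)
The plan is to apply Theorem~\ref{mainco} to identify $M$ as $\Spin(3)$ with a left-invariant metric, and then classify those metrics directly by computing the space of transvections at the identity.

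Since $k = ci_{\mathfrak s}(M) = 2$, Theorem~\ref{mainco} yields a transitive semisimple normal Lie subgroup $G' \subseteq I(M)$ with $2\dim(G') \leq k(k+1) = 6$. The only compact semisimple Lie group of dimension at most $3$ is $\Spin(3)$, so $G' = \Spin(3)$ and $\dim(M) \leq 3$. Combined with the hypothesis $n > 2$, this forces $n = 3$; the isotropy of $\Spin(3)$ is then discrete, and by simple connectedness trivial. Thus $M$ is $\Spin(3)$ itself, carrying a left-invariant Riemannian metric which, after $\Ad$-diagonalization, takes the form $\mathrm{diag}(\lambda_1, \lambda_2, \lambda_3)$ in a basis $\{e_1, e_2, e_3\}$ of $\so(3)$ that is orthonormal for the Killing form.

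Next I would identify which such metrics have $i_{\mathfrak s}(M) = 1$. Right-invariant fields $V^R$ are always Killing (left translations are isometries), while a left-invariant field $W^L$ is Killing iff $\mathrm{ad}(W)$ is skew-symmetric, which in the diagonal basis happens precisely when $W \in \mathbb R e_i$ with $\lambda_j = \lambda_k$ (the Berger case). Using the Koszul formula I would compute $(\nabla V^R)_e$ and show that $V^R$ is a transvection at $e$ if and only if $V = v e_i$ with $\lambda_i = \lambda_j + \lambda_k$. In the Berger subcase $\lambda_2 = \lambda_3 \neq \lambda_1$, the extra left-invariant Killing field $e_1^L$ combines with $e_1^R$ to produce a $1$-dimensional transvection subspace for every such metric (except the bi-invariant one, which is symmetric). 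Putting these computations together, $\dim \mathfrak s_e = 1$ in exactly two situations: (i) all $\lambda_i$ distinct with (after permutation) $\lambda_1 = \lambda_2 + \lambda_3$, giving the family $\langle\cdot,\cdot\rangle_s$; (ii) $\lambda_2 = \lambda_3 \neq \lambda_1$, giving the Berger family $\langle\cdot,\cdot\rangle^t$.

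Finally I would check that all the metrics in the two families are pairwise non-homothetical, along the lines of the final part of Theorem~\ref{classification}: a homothety must preserve the distribution of symmetry and induce a Lie-algebra automorphism preserving the transvection structure at $[e]$, and the length of the closed geodesic in the transvection direction fixed by the isotropy is a homothety invariant that distinguishes parameters within each family. The two families are separated from each other by the dimension of $I^o(M)$: Berger metrics admit an extra right-acting $S^1$ of isometries (so $\dim I^o(M) = 4$), while family~1 metrics have $\dim I^o(M) = 3$. The main obstacle I anticipate is the Berger subcase of the transvection computation, where one must carefully combine the right-invariant and extra left-invariant Killing fields and track the linear algebra precisely enough to confirm that the entire $1$-parameter Berger family (minus the bi-invariant point and possibly one further distinguished value where the metric coincides with a member of family~1) indeed has $ci_{\mathfrak s}(M) = 2$.
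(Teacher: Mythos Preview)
Your proposal is correct and follows essentially the same route as the paper: reduce to $\Spin(3)$ via Theorem~\ref{mainco}, diagonalize the left-invariant metric, use the Koszul formula to locate the transvections at $e$, and separate the two resulting families by $\dim I^o(M)$. For the Berger subcase you flag as the main obstacle, the paper's shortcut avoids any explicit computation: $e_1^R$ and $e_1^L$ share the same geodesic integral curve through $e$, so $(\nabla e_1^R)_e$ and $(\nabla e_1^L)_e$ are skew-symmetric endomorphisms of the $3$-dimensional space $T_eM$ both annihilating $e_1$, hence proportional, and a suitable nontrivial linear combination is parallel at $e$ for every $t$; one then only excludes the bi-invariant value $t=2$. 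One small correction on non-homothety: within the first family the isotropy is trivial, so your ``geodesic fixed by the isotropy'' argument does not apply there; the paper instead notes that any homothety induces an automorphism of $\Spin(3)=I^o(M)$, hence an isometry for the Killing form, and therefore preserves the eigenvalue spectrum of $A$.
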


The rest of this section is devoted to the proof of Theorem \ref{dos}.
If $M$ is a homogeneous irreducible Riemannian manifold with co-index  
of symmetry $k=2$, then 
$M = \Spin (3)$ with a left-invariant Riemannian metric  by  Theorem \ref
{mainco}. 

Let us first describe  the left-invariant Riemannian metrics on
$\Spin (3)\simeq S^3$.
 As usual, we will  identify a
 left-invariant Riemannian metric on $\Spin(3)$
 with a positive definite  inner product on
 $T_e(\Spin (3)) \simeq \so (3) $.
 Let $B $ be the positive definite inner product  on
 $\so (3)$ given by
 $B(X,Y) =
 - \text {trace} (XY)$ (so $-B$ is the Killing form of $\so (3)$).
Any  positive definite inner product $\langle \cdot , \cdot \rangle$
on $\so (3)$ is obtained by
 $\langle X ,Y\rangle     =   B( AX ,  Y) $,
 where $A$ is a positive definite symmetric endomorphism,
 with respect to $B$, of
 $\so (3)$.
 Observe that any positive definite inner product $\langle X , Y \rangle =
 B( AX ,  Y )$ is isometric  to the inner product
 $$ B (A(\Ad (g) (X)) ,
 \Ad(g) (Y)) =
  B( (\Ad (g))^{-1} A(\Ad (g)) (X) ,
  Y), $$
  for any $g\in \Spin (3)$ (the isometry between the corresponding two
left-invariant Riemannian metrics
is given by conjugation with $g$ in $\Spin (3)$).
  Note that
  $\Ad(\Spin (3))$ coincides with  
  the full special orthogonal group $\SO (\so (3), B)$.
Then, for prescribing an arbitrary left-invariant Riemannian metric on $\Spin
(3)$ (modulo 
isometries) one only needs to know the eigenvalues of $A$. 

 We identify $X\in \so(3)$  with the Killing
 field $q \mapsto X.q = \frac {d}{dt}_{\vert t=0}\Exp (tX)(q)$. The Lie algebra
structure  on $\so(3)$ will be that of Killing fields. 
So the Lie bracket is given by
  $[X, Y] = XY-YX $,
  which is minus the  bracket of left-invariant vector fields, 
since a Killing field may be regarded as a right-invariant vector field.

Let $\mathfrak s$ be the $1$-dimensional 
distribution of symmetry on $\Spin (3)$. Since  $\mathfrak s$ is 
a left-invariant distribution, we may assume that 
$\mathfrak s _1 = \mathbb R i$, where we are using, 
as before, the quaternions. We identify $\Spin (3)$ 
with the unit sphere of $\mathbb H$ and $\so (3)$ 
with $\text {Im}(\mathbb H)$. With this identification the bracket of 
$q_1,q_2 \in \text {Im}(\mathbb H)$  is given by 
   $q_1q_2-q_2q_1$, which coincides with $-[q_1,q_2]$, where  
$[\cdot , \cdot]$ is  the bracket between  Killing fields of 
$(\Spin (3) \langle \cdot , \cdot\rangle)$  
(identifying 
$q\in \text {Im}(\mathbb H)$ with the Killing field $x\mapsto q.x$).
The Killing form $-B$ is given by $B (q,q) = 8\vert q\vert ^2 $, 
$ q \in  \text {Im}(\mathbb H)$.

As for the case $k=3$,  we define
$$\mathfrak m = \{q\in \text {Im}(\mathbb H): q \text { 
is always perpendicular to } L(1) = \text {e}^{ti}\}.$$
Then $\mathfrak m $ is an $\Ad (S^1)$-invariant subspace of 
$\text {Im}(\mathbb H)\simeq \so (3)$, where $S^1= 
\{\text {e}^{ti} : t\in \mathbb R\}$.
Then, by Remark  \ref {uniquereductive} , $\mathfrak m$ is unique and 
so it coincides with  the linear span of $\{j,k\}$. This implies 
that the vectors $j= j.1$ and $k= k.1 $ of $T_1(\Spin (3))$ are 
perpendicular to $\mathfrak s _1 = \mathbb R i$. So 
$\langle i ,j\rangle = 0 = \langle i , k\rangle $. Then, if 
$\langle q, q' \rangle = B(Aq, q')$, $i$ is an eigenvector of 
$A$. By conjugating $\Spin (3)$ with some 
$\text {e}^{ti}$, we may assume that $j$ and $k$ are also eigenvectors 
of $A$. By rescaling the metric $\langle \cdot , \cdot \rangle$ we may assume 
that $Ai = 2i$ (in order to use a similar   
construction as for the case $k=3$, where the normal 
homogeneous metric was at the first step 
perturbed by a factor $2$ on the distribution of symmetry). 
Let $Aj=s j$ and $Ak= t k$. We may assume 
that $0< s \leq  t$ (eventually, by conjugating $\Spin (3)$ with $i$).
We will now consider $i$, $j$ and $k$ as  Killing fields 
$I : q\mapsto i.q$, $J : q\mapsto j.q$ and $K : q\mapsto k.q$.

We first assume that $I^o(\Spin (3) , \langle \cdot , \cdot \rangle) = 
\Spin (3)$. In this case we have 
$(\nabla I)_1 = 0$, since there are no more Killing fields than those 
induced by  $\so (3)$.
Recall 
 that for any homogeneous Riemannian manifold, if $X,Y,Z$ are Killing
fields, then the Levi-Civita connection 
is given by 
$$2 \langle \nabla _X Y , Z \rangle = \langle [X,Y]  , Z \rangle
+ \langle [X,Z]  , Y \rangle + \langle [Y,Z]  , X \rangle.
$$
In fact, this equation comes from the well-known Koszul formula
for the Levi-Civita connection, by observing that the Lie derivative
of the metric, along any Killing field is zero.
So we have
$$0 = \langle [J,I]  , K \rangle
+ \langle [J,K]  , I \rangle + \langle [I,K]  , J \rangle.
$$
Since $[J,I]_1 = ij -ji = 2k$, $[J,K]_1 = kj-jk = -2i$ and 
$[I,K]_1 = ki -ik = 2j$, we get
$0 = 2t B(k,k) -4 B(i,i) + 2s B(j,j)$.
Since $B(i,i)= B(j,j) = B(k,k)\neq 0$, this implies
$s + t = 2$.
Conversely,  if 
$s + t = 2$, we obtain by a direct calculation that
$(\nabla I)_1 = 0$.
We conclude that,   
$\langle \cdot , \cdot \rangle _s$, $0<s \leq 1$, are the 
$\Spin(3)$-invariant Riemannian metrics  on $\Spin (3)$ such that the Killing
field $I$ 
is parallel at $1$. So the index of symmetry is at least $1$. 

\begin {rem}\label {xxx}
(i) The manifold $M = (\Spin (3), \langle \cdot , \cdot \rangle _s)$ is not a 
product. Otherwise, it would split off a line. 
Assume that $0 < s < 1$. 
Then, if the index 
of symmetry is greater than $1$, by Theorem \ref {mainco}, $M$ would 
be symmetric. A direct computation shows that $(\nabla _J J)_1 = 0$. 
So $x \mapsto \text e ^{jx}$ is a closed geodesic 
of $M$ with  period  
 $2\pi \sqrt s$. This period is different from the period 
$2\pi\sqrt 2 $ of the geodesic $x \mapsto \text e ^{ix}$ 
(recall that $\langle i , i \rangle = 2$ and that $s<1$).
Then $M$ is not symmetric. Otherwise it must be 
isometric to a sphere and hence all geodesics would have the same 
length. So the index of symmetry of $M$ is $1$. 

(ii)
Let $S^2 = \Spin (3)/S^1$ be the quotient of $M = (\Spin (3), \langle \cdot ,
\cdot \rangle _s)$
by the leaves of symmetry, where 
$S^1 = \{\text {e}^{xi}: x\in \mathbb R\}$. 
It   is not difficult 
to show  that   the projection 
$\pi : (\Spin (3), \langle \, , \, \rangle _s) \to S^2= \Spin (3)/S^1$ 
is  a Riemannian submersion 
(eventually after rescaling the metric of $S^2$) if and only 
if $s=1$ (and so $t=1$). 
 Assume that 
 the full (connected) isometry group $I^o (M)$ 
of $M$ with any left-invariant Riemannian metric with $k = 2$
satisfies $\dim(I^o(M)) > 3$. 
The compact group $I^o (M)$ 
 acts on the quotient space 
$S^2$ (since any isometry preserves the foliation of symmetry). 
Then, if $S^2$ has the normal homogeneous metric, 
$I^o (M)$ 
 acts by isometries 
and thus $I^o (M)$ 
 must have  a normal subgroup of positive dimension which acts 
trivially on $S^2$. 
If $X\neq 0$ belongs to the Lie algebra of 
this normal subgroup, then $X$ defines a Killing field on $M$ which 
must be tangent to the $1$-dimensional distribution of symmetry $\mathfrak s$. 
This implies that for any two points $p,q$ in a leaf of symmetry
 there exists  $h \in I^o (M)$ with $h(p) = q$  
and such that $h$  projects trivially to the quotient $S^2$. 
Then the projection $\pi : M \to S^2$  must be a Riemannian submersion 
(for some 
$\Spin (3)$-invariant metric on $S^2$, which is unique up to scaling).  
This implies $s=t=1$. 
\end {rem}

Assume that $\Spin (3)$ together with a left-invariant Riemannian metric has
index 
of symmetry equal to $1$.
If there exists a point $g \in \Spin(3)$ such that $Z\in \so (3)$ 
is tangent to the 
$1$-dimensional leaf of symmetry $L(g)$ of $M$ at $g$, then it must always be
tangent to $L(g)$ (since the distribution of 
symmetry is invariant under isometries). 
This implies $L(g)= \Exp (tZ)(g)$ ($t \in \mathbb R$), and so $L(g)$ is 
closed 
(since all the $1$-parameter subgroups of $\Spin (3)$ are closed).

In order to describe all left-invariant Riemannian metrics on $M = \Spin (3)$ 
it only remains to analyze the case where there is
no parallel Killing field at $1$ which belongs to $\so (3)$.
This implies that $\dim I^o(M)=4$. 
In fact,  
observe that the dimension of the full isotropy group has to 
be $1$, $2$ or $3$. In the last case $M$ must is a round sphere and hence
symmetric.
The dimension of the isotropy group  
at $p \in  M$ cannot be $2$ because it would, via the isotropy representation, 
be an abelian 
$2$-dimensional subgroup of $\SO (T_p(M))\simeq \SO (3)$. Thus the dimension of
the full isotropy group must be $1$.

 In this case
there exists a non-trivial ideal $\mathfrak a$ 
of the Lie algebra $\mathfrak g$ of $G = I^o(M)$. Such an ideal must have
dimension 
$1$. In fact, this ideal must be complementary to  
$\so (3)$, which must be also an ideal, since it has codimension 
$1$ (and $\mathfrak g$ admits a bi-invariant metric). 
 Moreover, since 
any $X\in \mathfrak a$ 
projects trivially to the quotient of $M$ over the 
leaves of symmetry, $X$ must always be tangent to $\mathfrak s$. 
Observe that $X$ must be 
a left-invariant vector field since $X$ commutes with $\so (3)$. 
So, as previously observed, we may 
assume that $X=\hat i$, the left-invariant vector field with initial 
condition $i$ at $1\in \Spin (3)$ (i.e. $X_g = gi$). 
Recall that a Killing field associated with an element in $\so (3)$ 
may be regarded as a right-invariant vector field. In particular, $I$ is a 
right-invariant vector field ($I_g= ig$). Then the 
left-invariant  Rimannian metric $\langle \cdot , \cdot\rangle$ 
of $M=\Spin (3)$ is $\Ad (\Exp (ti))$-invariant. 
This implies that $i$ is an eigenvector of $A$ at $1$ and that 
the eigenvalues of  $A$ in the 
orthogonal complement of $i$ are equal, where 
$ \langle x, y \rangle = B(Ax,y)$.

So the left-invariant Riemannian metric must be associated to a  triple of 
numbers $(t,t,a)$ corresponding to the eigenvalues 
associated to the eigenvectors $j, k$ and $i$, respectively. By rescaling the
metric 
we may assume that $a=2$ (in order to be coherent with the 
first family of metrics $\langle \cdot , \cdot \rangle _s$). 
Conversely, a metric described by such a triple 
$(t,t,2)$ has a parallel Killing field at $1$.
In fact, consider the two Killing fields 
$\hat i$ and $I$, which cannot be proportional, 
because no vector field of $\Spin (3)$ can be  both 
left- and right-invariant. 
Since  the integral curves of both Killing fields  
coincide at $1$ and give a geodesic, we have 
$\nabla _i\hat i = 0 = \nabla _i\hat I$. 
Then the skew-symmetric endomorphisms $(\nabla \hat i)_1$ and $(\nabla I)_1$ of
$T_1M$
must be proportional (since $\dim (M)=3$). 
Thus there is a linear combination $\alpha \hat i 
+ \beta I$ which is parallel at $1$ (and it is non-zero, 
since $\hat i$ and $I$ are not proportional). 
Observe that when  $t=1$, $I$ is parallel at $1$ and 
so $\alpha = 0$ (the associated  metric is the same 
as
$\langle \cdot , \cdot \rangle_1$, previously described).
If $t\neq 2$, then $M$ cannot be symmetric, since 
the integral curves of $I$ and $J$, starting at $1$, 
have different length. 
In the case that $t=2$, then $\Spin (3)$ has the bi-invariant
Riemannian metric and so it is a symmetric space. 
We denote the left-invariant Riemannian metrics associated to $(t,t, 2)$ 
by $\langle \cdot , \cdot \rangle^t$, $0<t\neq 2$. 

\begin {rem}
(i) Any homothety between two different metrics
in the union of the families $\langle \cdot , \cdot \rangle_s$, $0<s < 1$, and
$\langle \cdot , \cdot \rangle^t$, $0<t \neq 2$ must be an isometry, 
since the length of the respective circles of symmetry are equal 
to $2\pi \sqrt 2$. 

(ii) No metric $\langle \cdot , \cdot \rangle_s$, $0<s<1$,
is isometric  
to a metric $\langle \cdot , \cdot \rangle ^t$, $0<t$. In fact, 
the first family 
of metrics never define a Riemannian submersion onto 
$S^2$, the quotient of $M$ 
by the leaves of symmetry, whereas  the second family always does.

(iii) Let $M_s= (\Spin (3),  \langle \cdot , \cdot \rangle_s)$.  Then, 
from Remark \ref {xxx} (ii), $I^o (M_s) = \Spin (3)$ ($0<s<1$). 
Observe that $s< 2-s < 2$ are the eigenvalues of the 
symmetric tensor $A_s$ that  
relates $\langle \cdot , \cdot\rangle_s$ with 
$\langle \cdot , \cdot \rangle = -B$, 
where $B$ is the Killing form of $\so (3)$. If $h: M_s\to M_{s'}$ 
is an  isometry, then $h$ induces a group isomorphism 
from  $\Spin (3) = I^o(M_s)$ 
onto $\Spin (3) = I^o(M_{s'})$. This implies that 
the eigenvalues of $A_s$ are the same as those of 
$A_{s'}$ and hence $s=s'$. 

(iv) If $t\neq t'$, then $\langle \cdot , \cdot \rangle ^t$ is not 
isometric to $\langle \cdot , \cdot \rangle ^{t'}$. In fact, $t/2$ 
is the radius of the sphere, obtained as the quotient of $M$ by the 
leaves of symmetry, such that the projection is a Riemannian submersion. 
\end {rem}

The previous remark finishes  the proof of Theorem \ref {dos}.

\section{Examples from fibre bundles over polars}

In this section we review the construction of certain fibre bundles by Nagano and Tanaka \cite{NaganoTanaka}, 
and show how to get examples of compact simply connected Riemannian homogeneous
manifolds with non-trivial index of symmetry.

Let $M = G/K$ be an irreducible simply connected symmetric space of compact type
and choose $o \in M$ such that $K \cdot o = o$. 
 Let $B \neq \{o\}$ be a connected 
component of 
the set of fixed points of $\sigma_o$, where $\sigma_o$ is the 
geodesic symmetry of $M$ at $o$. Note that $B$ is a totally geodesic 
submanifold, since it is a connected component of the fixed 
point set of an isometry. 
There always exists such a totally geodesic submanifold $B$ since the midpoint 
of a closed geodesic through $o$ is fixed by $\sigma_o$.

Let $d$ be the distance between $o$ and $B$ and choose $q \in B$ such that $d$
is the distance from $o$ to $q$ is equal to the distance from $o$ to $B$. 
Let $\gamma$ be a unit speed geodesic through $o$ 
and $q$ such that $\gamma(0) = o$ and $\gamma(d) = q$. Then $\gamma$ is a closed
geodesic of
period $2d$. In fact, $q = \gamma(d) = \sigma_o(\gamma(d)) = 
\gamma(-d)$. It then follows from Remark \ref{Wolf} that 
$\gamma$ is a closed geodesic.  This implies that $o$ is 
fixed by $\sigma_q$, the symmetry at $q$. Also, the 
symmetries $\sigma_o$ and $\sigma_q$ commute, since 
they both fix $o$ and their differentials commute.  

Since $M$ is simply connected, the isotropy group $K$ is connected.
One can show that $B = K \cdot q$. 
In particular, all the points in $B$ are equidistant 
to $o$. In fact,  $d_q\sigma_o$ is the 
identity when restricted to $T_qB$ and minus the identity 
when restricted to $(T_qB)^\bot$. Moreover, this holds at 
any point of $B$. So any $g \in G$ which leaves $B$ 
invariant commutes with $\sigma_o$. 
Conversely, it is obvious that $K$ maps fixed points of 
$\sigma_o$ into fixed points of $\sigma_o$. 
We thus have proved that the subgroup of 
$G$ which leaves $B$ invariant coincides with $K$.

Note that the involution $\sigma_q$ leaves $B$ 
invariant (since $B$ is totally geodesic), and so it 
maps $K$ into $K$. Thus, $(K, K^+)$ is a symmetric pair, 
where $K^+$ is the isotropy group of $K$ at $q$. 
Moreover, one has that $K^+ = K \cap K'$, where $K'$ 
is the isotropy group of $G$ at $q$. Such a symmetric 
pair is not, in general, effective 
(as one can see from the tables in \cite{NaganoTanaka}).

The totally geodesic submanifold $B$ is called a \emph{polar} of $M$.
The normal space to $T_qB$ at $q$ is a Lie triple system and hence induces,
via the exponential map, a totally geodesic 
submanifold of $M$ which is called a \emph{meridian}. 
This follows from the fact that $\exp_q((T_qB)^\bot)$ 
coincides with the set of fixed points 
of $\sigma_q \circ \sigma_o$ 
(connected component through $q$). 
In fact, if $w \in (T_qB)^\bot$ and $\beta(t)$ 
is a geodesic with $\beta'(0) = w$, then 
$(\sigma_q \circ \sigma_o)  (\beta(t)) 
= \beta(t)$, since $d_q(\sigma_q \circ \sigma_o)$ 
is the identity when restricted to $(T_qB)^\bot$. 
This shows that $\exp_q((T_qB)^\bot) $ is contained in the fixed point set of 
$\sigma_q \circ \sigma_o$. 
The other inclusion holds since $q$ is an isolated 
fixed point of $\sigma_q$.

We construct now the so-called centrioles. 
Let $p$ be the midpoint of the geodesic $\gamma$ 
joining $o$ and $q$. In line with our notation above we have
$p = \gamma(d/2)$. 
The \emph{centriole} through $p$ is the orbit $K^+ \cdot p$. 
Such an orbit is totally geodesic. In fact, the 
symmetry $\sigma_p$ interchanges $o$ and $q$, 
and so $K$ with $K'$. So $\sigma_p$ leaves 
$K^+ = K \cap K'$ invariant and, since it fixes $p$, 
leaves the centriole $K^+ \cdot p$ invariant. 
Then, $\sigma_p$  leaves the second fundamental 
form of $K^+ \cdot p$ invariant, but on the other 
hand it reverses its sign. So the centriole 
$K^+ \cdot p$ must be totally geodesic. 
Moreover, it is contained in the meridian containing $q$, since 
$K^+$ commutes with both $\sigma_q$ and $\sigma_o$ and 
$\sigma _q \circ \sigma _o (p) = p$.
We have that $(K^+, K^{++})$, where $K^{++}$ is 
the isotropy subgroup of $K^+$ at $p$, is a 
symmetric pair (not effective, in general).

We now define $S = K \cdot p$, which is a fibre 
bundle over $B$ whose  fibres are the centrioles. 
In fact, since $\gamma$ is minimizing in $[0, d]$,  
$\gamma$ is the unique (unit speed) geodesic
from $o$ to $p = \gamma (d/2)$. 
So, the isotropy $K_p$ of $K$ at $p$ must fix  $\gamma$, 
since it fixes $o$ and $p$. 
Then $K\cdot q = K\cdot \gamma (d) = q$ and therefore 
$K_p\subset K^+$, which implies $K_p = K^{++}$. 
So, we get the fiber bundle 
$$K^+/K^{++} \to K/K^{++} \to K/K^+.$$
Moreover, $K \cdot p$ turns out to be 
diffeomorphic, 
via the exponential map at $o$, to the $R$-space 
$K \cdot v \subset T_oM$, where $v = \gamma'(0)$ (or equivalently,  
$K \cdot p$ is diffeomorphic to an orbit of an $s$-representation). 

The submanifold $S = K \cdot p $ has 
parallel Killing fields in any direction of the 
centriole $K^+ \cdot p$. In fact, if $\mathfrak p^+$ 
is the Cartan subspace,  associated with $(K^+, K^{++})$, 
then $\mathfrak p^+ \subset \mathfrak p$, where $\mathfrak p$ 
is the Cartan subspace associated to $(G, K)$ 
(and elements of $\mathfrak p^+$ are parallel at $p$ on $M$, 
and so on $S$ with the induced metric).
With the same arguments as in \cite[Lemma 6.2]{OlmosReggianiTamaru}, 
one can prove the following result:

\begin{thm} \label{polar}
Let $M = G/K$ be an irreducible simply connected Riemannian symmetric space of
compact type.
Assume that the polar $B = K/K^+$ is irreducible 
and that $S = K/K^{++}$, with the induced Riemannian metric,
is not a symmetric space. 
Then the co-index of symmetry of $K/K^{++}$ is equal to the dimension of  the
polar $B = K/K^+$ and the leaves of symmetry coincide with the fibers of the
fibration $K^+/K^{++} \to K/K^{++} \to K/K^+$ (which are centrioles in $M$).
\end{thm}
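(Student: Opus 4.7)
My plan is to establish $\mathfrak{s}^S_p = T_p(K^+ \cdot p)$, where $\mathfrak{s}^S$ denotes the symmetric distribution on $S = K/K^{++}$; this yields $ci_{\mathfrak s}(S) = \dim B$ and identifies the leaves of symmetry with the fibers of $\pi\colon S \to B$. For the inclusion $T_p(K^+ \cdot p) \subseteq \mathfrak{s}^S_p$: the Cartan subspace $\mathfrak{p}^+$ of the symmetric pair $(K^+, K^{++})$ is, by construction, the $-1$-eigenspace of $(\sigma_p)_*$ on $\mathfrak{k}^+$, hence $\mathfrak{p}^+ \subseteq \mathfrak{p}^p$, the Cartan subspace of $(G,K)$ at $p$; so every $X \in \mathfrak{p}^+$ is a Killing field of $M$ with $(\nabla^M X)_p = 0$. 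Since the centriole $K^+ \cdot p$ is totally geodesic in $M$ and contained in $S$, it is also totally geodesic in $S$; combining this with the fact that $X$ is tangent to $S$, the tangential projection of $(\nabla^M_Y X)_p = 0$ to $T_p S$ yields $(\nabla^S X)_p = 0$. The evaluation $\mathfrak{p}^+ \to T_p(K^+ \cdot p)$ is surjective, which gives the inclusion.

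For the reverse inclusion, let $L(p)$ denote the leaf of symmetry through $p$ in $S$ and set $H^* := \{k \in K : k \cdot L(p) = L(p)\}$. By the previous step and $K$-invariance of $\mathfrak{s}^S$, the vertical distribution $V$ is contained in $\mathfrak{s}^S$, so the fiber $K^+ \cdot p$ (the integral manifold of $V$ through $p$) lies in $L(p)$; for $k \in K^+$ we then have $kp \in L(p)$, forcing $kL(p) = L(kp) = L(p)$ and $K^+ \subseteq H^*$. Since $K$ acts transitively on $S$, every point of $L(p)$ has the form $kp$ with $k \in H^*$, so $L(p) = H^* \cdot p$ with isotropy $K^{++}$ and $\dim L(p) = \dim \mathfrak{h}^* - \dim \mathfrak{k}^{++}$. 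From $K^+ \subseteq H^*$, $\Ad(K^+)$ preserves $\mathfrak{h}^*$, so it acts on the subspace $\mathfrak{h}^*/\mathfrak{k}^+$ of $\mathfrak{k}/\mathfrak{k}^+ \cong T_q B$. Irreducibility of $B$ (by hypothesis) forces $\mathfrak{h}^*/\mathfrak{k}^+$ to be either $\{0\}$ or all of $T_q B$. In the first case $\dim L(p) = \dim \mathfrak{k}^+ - \dim \mathfrak{k}^{++} = \dim(K^+ \cdot p)$, so $L(p) = K^+ \cdot p$ and $\mathfrak{s}^S_p = T_p(K^+ \cdot p)$, as required. In the second case $\dim L(p) = \dim \mathfrak{k} - \dim \mathfrak{k}^{++} = \dim S$, so $L(p) = S$ and $S$ is globally symmetric, contradicting the hypothesis.

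The principal difficulty is Step~1: ensuring that elements of $\mathfrak{p}^+$, which are manifestly parallel Killing fields on the ambient symmetric space $M$, remain parallel on the submanifold $S$ with the induced metric. This rests on the two-tier totally geodesic structure $K^+ \cdot p \subseteq S \subseteq M$ together with the tangentiality of $X \in \mathfrak{p}^+$ to the centriole, so that $(\nabla^M X)_p = 0$ passes through the Gauss formula to give $(\nabla^S X)_p = 0$. Once this is in place, the remainder of the argument is a clean Lie-theoretic dichotomy driven by the irreducibility hypothesis on $B$, yielding either the desired identification of leaves with fibers or a contradiction with the assumption that $S$ is not symmetric.
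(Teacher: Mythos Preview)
Your proof is correct and follows essentially the same strategy as the paper: Step 1 matches the paragraph preceding the theorem (elements of $\mathfrak p^+\subseteq\mathfrak p^p$ are parallel at $p$ on $M$, hence on $S$ via the Gauss formula), and your Step 2 is an algebraic rephrasing of the paper's argument, which projects $\mathfrak s^S$ to a $K$-invariant distribution $\bar{\mathfrak s}$ on the irreducible symmetric space $B$ and concludes $\bar{\mathfrak s}=0$ or $\bar{\mathfrak s}=TB$ --- your $\Ad(K^+)$-invariant subspace $\mathfrak h^*/\mathfrak k^+$ of $\mathfrak k/\mathfrak k^+\cong T_qB$ is precisely $\bar{\mathfrak s}_q$, so the two dichotomies are identical. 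One minor point: the totally geodesic property of the centriole plays no role in Step 1; once $X\in\mathfrak p^+\subseteq\mathfrak k$ is tangent to $S$ and $(\nabla^M X)_p=0$, the Gauss formula immediately gives $(\nabla^S X)_p=0$.
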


\begin{proof}
We 
have already proved that the centrioles are tangent 
to the distribution of symmetry $\mathfrak s$. Note that $\mathfrak s$ 
projects down to a distribution $\bar{\mathfrak s}$ 
on the symmetric space $B = K/K^+$, which must 
be $K$-invariant (since isometries preserve 
the distribution of symmetry). 
So, since $B$ is irreducible, 
we have  $\bar{\mathfrak s} = 0$ 
or $\bar{\mathfrak s} = TB$. 
However, $\bar{\mathfrak s} = TB$ 
implies $\mathfrak s = TS$, 
which cannot happen since $S$ is not a symmetric space by assumption. 
Thus we have $\bar{\mathfrak s} = 0$,
and therefore $\mathfrak s$ coincides 
with the distribution given by the tangent 
spaces to the centrioles.
\end{proof}

\begin{exa}\label{CP2}
Consider the complex projective plane $M = {\mathbb C}P^2 = SU(3)/S(U(1)U(2))$ $ = G/K$. There is
only one polar in this situation, namely 
\[B = {\mathbb C}P^1 =
S(U(1)U(2))/S(U(1)U(1)U(1)) = K/K^+ \cong U(2)/U(1)U(1).\]
 The orbit of $K$
through the midpoint of a geodesic from $o$ to a point in $B$ is a distance
sphere $S^3 = K/K^{++}  \cong U(2)/U(1)$ in ${\mathbb C}P^2$ and the fibers of
the projection $K/K^{++} \to K/K^+$ are circles $S^1 = K^+/K^{++} \cong
U(1)U(1)/U(1) \cong U(1)$. These circles are centrioles in ${\mathbb C}P^2$. The
induced metric from ${\mathbb C}P^2$ on the distance sphere $S^3$ gives a Berger
sphere and its coindex of symmetry is equal to $2$. Up to homothety, it is one of
the metrics $\langle \cdot , \cdot \rangle^t$ in our classification for $k = 2$.
By rescaling the metric on ${\mathbb C}P^2$ one obtains other metrics in this
family. The remaining Berger sphere metrics can be obtained by  considering
distance spheres in the complex hyperbolic plane ${\mathbb C}H^2 =
SU(1,2)/S(U(1)U(2))$ which are not covered by the construction method in Theorem
\ref{polar}.
\end{exa}

\bigskip
\footnotesize
\noindent\textit{Acknowledgments.}
This research was supported by Famaf-UNC, FCEIA-UNR, CIEM-Conicet, and partially supported by Foncyt.

\end{document}